\title[Transverse properties of parabolic subgroups of Garside groups]{Transverse properties of parabolic subgroups of Garside groups}
\author[Y Antol\'in]{Yago Antol\'in}
\address{Departamento de Matem\'aticas, Facultad de Ciencias, Universidad Aut\'onoma de Madrid, 28049 Madrid, Spain}
\email{yago.anpi@gmail.com}	
\urladdr{}
\author[L Paris]{Luis Paris}
\address{IMB, UMR 5584, CNRS, Univ. Bourgogne Franche-Comt\'e, 21000 Dijon, France}
\email{lparis@u-bourgogne.fr}
\urladdr{}
\newtheorem{thm}{Theorem}[section]
\newtheorem{lem}[thm]{Lemma}
\newtheorem{prop}[thm]{Proposition}
\newtheorem{corl}[thm]{Corollary}
\theoremstyle{definition}
\newtheorem*{rem}{Remark}
\numberwithin{equation}{section}
\renewcommand{\thefigure}{\ifnum \c@section>\z@ \thesection.\fi
 \@arabic\c@figure}
\begin{document}

\def\N{\mathbb N} \def\Div{{\rm Div}} \def\SS{\mathcal S}
\def\ev{{\rm ev}} \def\Gr{{\rm Gr}} \def\Geo{{\rm Geo}}
\def\Z{\mathbb Z} \def\FF{\mathcal F} \def\AA{\mathcal A}
\def\diam{{\rm diam}} \def\Min{{\rm Min}}


\begin{abstract}
Let $G$ be a Garside group endowed with the generating set $\SS$ of non-trivial simple elements, and let $H$ be a parabolic subgroup of $G$.
We determine a transversal $T$ of $H$ in $G$ such that each $\theta \in T$ is of minimal length in its right-coset, $H \theta$, for the word length with respect to $\SS$.
We show that there exists a regular language $L$ on $\SS \cup \SS^{-1}$ and a bijection $\ev : L \to T$ satisfying $\lg (U) = \lg_\SS( \ev(U))$ for all $U \in L$.
>From this we deduce that the coset growth series of $H$ in $G$ is rational. 
Finally, we show that $G$ has fellow projections on $H$ but does not have bounded projections on $H$. 
\end{abstract}

\maketitle


\section{Introduction}\label{sec1}

Let $S$ be a finite set. 
A \emph{Coxeter matrix} over $S$ is defined to be a square matrix $M =(m_{s,t})_{s,t \in S}$ indexed by the elements of $S$, with coefficients in $\N \cup \{ \infty \}$, such that $m_{s,s}=1$ for all $s \in S$, and $m_{s,t} = m_{t,s} \ge 2$ for all $s,t \in S$, $s \neq t$.
For each $s,t \in S$, $s \neq t$, and each integer $m \ge 2$, we set $\Pi(s,t,m) = (st)^{\frac{m}{2}}$ if $m$ is even and $\Pi(s,t,m)= (st)^{\frac{m-1}{2}} s$ if $m$ is odd. 
In other words, $\Pi(s,t,m)$ denotes the word $sts \cdots$ of length $m$.
The \emph{Artin group} associated with the above Coxeter matrix  $M$ is the group $A=A_M$ defined by the presentation 
\[
A = A_M = \langle S \mid \Pi (s,t,m_{s,t}) = \Pi(t,s,m_{s,t}) \text{ for all } s,t \in S,\ s \neq t \text{ and } m_{s,t} \neq \infty \rangle\,.
\]
The \emph{Coxeter group} associated with $M$, denoted by $W=W_M$, is the quotient of $A$ by the relations $s^2=1$, $s \in S$.

Artin groups were introduced by Tits \cite{Tits1} as extensions of Coxeter groups. 
There are few known results valid for all Artin groups, and most of the work in the subject concerns particular families of them.
The family concerned by the present paper is the one of Artin groups of spherical type, that is, the Artin groups whose associated Coxeter groups are finite. 
Seminal work on these groups are Brieskorn \cite{Bries1,Bries2}, Brieskorn--Saito \cite{BriSai1} and Deligne \cite{Delig1} in relation with the so-called discriminant varieties.
They are involved in several subjects such as singularities, Hecke algebras, hyperplane arrangements, mapping class groups, and there is an extensive literature on them. 
The leading examples of spherical type Artin groups are the braid groups.

Let $G$ be a group and let $M$ be a submonoid of $G$ such that $M \cap M^{-1} = \{1\}$.
Then we have two partial orders $\le_L$ and $\le_R$ on $G$ defined by $\alpha \le_L \beta$ if $\alpha^{-1} \beta \in M$, and $\alpha \le_R \beta$ if $\beta\alpha^{-1} \in M$.
For each $a \in M$ we set $\Div_L(a) = \{b \in M \mid  b \le_L a\}$ and $\Div_R(a) = \{b \in M \mid b \le_R a\}$.
An element $a \in M$ is called \emph{balanced} if $\Div_L(a) = \Div_R(a)$.
In this case we set $\Div(a) = \Div_L(a) = \Div_R(a)$.
On the other hand, we say that $M$ is \emph{Noetherian} if, for each $a \in M$, there exists an integer $n \ge 1$ such that $a$ cannot be decomposed as a product of more than $n$ non-trivial factors.

Let $G$ be a group, let $M$ be a submonoid of $G$ such that $M \cap M^{-1} = \{1\}$, and let $\Delta$ be a balanced element of $M$.
We say that $G$ is a \emph{Garside group} with \emph{Garside structure} $(G,M,\Delta)$ if:
\begin{itemize}
\item[(a)]
$M$ is Noetherian;
\item[(b)]
$\Div(\Delta)$ is finite, it generates $M$ as a monoid, and it generates $G$ as a group;
\item[(c)]
$(G, \le_L)$ is a lattice. 
\end{itemize}
In this case $\Delta$ is called the \emph{Garside element} and the elements of $\Div(\Delta)$ are called the \emph{simple elements} of $G$ for the given Garside structure. 
The lattice operations of $(G,\le_L)$ are denoted by $\wedge_L$ and $\vee_L$, where for $\alpha, \beta \in G$,
$$ \alpha \wedge_L \beta =\max_{\le_L}(\gamma\in G \mid \gamma \le_L \alpha \text{ and } \gamma \le_L \beta) $$
and 
$$ \alpha \vee_L \beta =\min_{\le_L}(\gamma \in G \mid \alpha \le_L \gamma \text{ and } \beta \le_L \gamma)\,.$$
Remark that if $\alpha, \beta \in M$, then $1\le_L \alpha$  and $1 \le_L \beta$ and therefore $1 \le_L (\alpha \wedge_L \beta) \le_L (\alpha \vee_L \beta)$ and thus, $\wedge_L$ and $\vee_L$ restrict to lattice operations of $(M, \le_L)$.
On the other hand, the ordered set $(G, \le_R)$ is also a lattice and its lattice operations are denoted by $\wedge_R$ and $\vee_R$ and these operations also restrict to lattice operations of $(M, \le_R)$.

This definition is due to Dehornoy--Paris \cite{DehPar1}. 
It is inspired by the work of Garside \cite{Garsi1}, Epstein et al. \cite{Epste1}, Charney \cite{Charn1,Charn2} and others on braid groups and, more generally, on Artin groups of spherical type. 
It isolates certain combinatorial characteristics of the braid groups and of the spherical type Artin groups that suffice to show certain algebraic properties such as the existence of a solution to the conjugacy problem, or the bi-automaticity.  
By Brieskorn--Saito \cite{BriSai1} and Deligne \cite{Delig1}, the Artin groups of spherical type are Garside groups, and a large part of the study of these groups is made now in terms of Garside groups.
We refer to Dehornoy et al. \cite{DehEtAl} for a full account on the theory.

Let $A$ be an Artin group associated with a Coxeter matrix $M=(m_{s,t})_{s,t \in S}$.
The subgroup $A_X$ of $A$ generated by a subset $X$ of $S$ is called a (standard) \emph{parabolic subgroup} of $A$.
By van der Lek \cite{Lek1} we know that $A_X$ is itself an Artin group, associated with the Coxeter matrix $M_X=(m_{s,t})_{s,t \in X}$.
Parabolic subgroups play a prominent role in the study of Artin groups in general and in that of Artin groups of spherical type in particular.
For example, they are one the main ingredients in the definition/construction of the Salvetti complex for Artin groups, which is one of the most important tools in the study of the $K(\pi,1)$ conjecture for Artin groups (see for example Godelle--Paris \cite{GodPar1} and Paris \cite{Paris1}) and in the calculation of the cohomology of these groups (see for example De Concini--Salvetti \cite{DeCSal1} and Callegaro--Moroni--Salvetti \cite{CaMoSa1}).

The notion of parabolic subgroup was extended to Garside groups by Godelle \cite{Godel1, Godel2} as follows. 
Let $(G,M,\Delta)$ be a Garside structure on a Garside group $G$ and let $\delta$ be a balanced element in $M$.
Let $G_\delta$ (resp. $M_\delta$) denote the subgroup of $G$ (resp. the submonoid of $M$) generated by $\Div(\delta)$.
Then we say that $(G_\delta,M_\delta,\delta)$ is a \emph{parabolic substructure} if $\Div(\delta) = \Div(\Delta) \cap M_\delta$.
Note that this condition implies that $\delta \in \Div (\Delta)$.
In this case we say that $G_\delta$ is a \emph{parabolic subgroup} of $G$ and that $M_\delta$ is a \emph{parabolic submonoid} of $M$.

In this paper a \emph{parabolic subgroup will be always assumed to be non-trivial.}

Let $A$ be an Artin group of spherical type, let $S$ be the standard generating set of $A$, and let $X$ be a subset of $S$.
Then, by Brieskorn--Saito \cite{BriSai1}, the subgroup $A_X$ of $A$ generated by $X$ is a parabolic subgroup in Godelle's sense, and all parabolic subgroups of $A$ are of this form.

\begin{rem}
Let $H$ be a parabolic subgroup of a Garside group $G$ with Garside structure $(G, M, \Delta)$. 
Then there is a unique parabolic substructure $(G_\delta, M_\delta, \delta)$ such that $H=G_\delta$.
Indeed, the balanced element $\delta$ which defines this parabolic substructure is the greatest element of $\Div(\Delta)\cap H$ for the ordering $\le_L$, hence $H$ determines $\delta$.
Similarly, if $N$ is a parabolic submonoid of $M$, then there is a unique parabolic substructure $(G_\delta, M_\delta, \delta)$ such that $N=M_\delta$, where $\delta$ is the greatest element of $\Div(\Delta) \cap N$ for the ordering $\le_L$.
So, we can speak of a parabolic subgroup and/or of a parabolic submonoid without necessarily mentioning the balanced element $\delta$ or the triple $(G_\delta,M_\delta,\delta)$.
\end{rem}

The results of the present paper are new for Artin groups of spherical type and even for braid groups and, as often now in the theory, the whole study is done in terms of Garside groups.

Let $G$ be a Garside group with Garside structure $(G, M, \Delta)$.
As often in the theory the generating set of $G$ that will be considered in the present paper is the set $\SS=\Div (\Delta) \setminus \{1\}$ of non-trivial simple elements. 
For each $\alpha \in G$ we denote by $\lg(\alpha) = \lg_\SS(\alpha)$ the word length of $\alpha$ with respect to $\SS$.
Let $H$ be a parabolic subgroup of $G$.
Recall that a \emph{right-coset} of $H$ in $G$ is a subset of $G$ of the form $H \alpha = \{ \beta \alpha \mid \beta \in H\}$, with  $\alpha \in G$.
Recall also that a (right) \emph{transversal} of $H$ in $G$ is a subset $T$ of $G$ such that $C \cap T$ is a singleton for every right-coset $C$ of $H$ in $G$. 
On the other hand, we define the \emph{length} of a right-coset $C$ of $H$ in $G$ to be $\lg(C) = \min\{\lg(\beta) \mid \beta \in C\}$.
In a first step we will determine an explicit transversal $T$ of $H$ in $G$ which satisfies $\lg(\theta) = \lg(H\theta)$ for all $\theta \in T$ (see Theorem \ref{thm3_3} and Theorem \ref{thm3_4}).

We denote by $\ev : (\SS \cup \SS^{-1})^* \to G$ the map which sends each word $U$ to the element of $G$ that it represents. 
In a second step we will determine a regular language $L$ over $(\SS \cup \SS^{-1})^*$ satisfying $\ev(L) = T$, the restriction $\ev : L \to T$ is a bijective correspondence, and $\lg(U) = \lg(\ev (U))$ for all $U \in L$ (see Theorem \ref{thm4_2}).
The notion of regular language will be recalled in Section \ref{sec4}.

For each $n \in \N$ we denote by $e(n) = e_{G,H,\SS}(n)$ the number of right-cosets of $H$ in $G$ of length $n$.
The \emph{coset growth series} of $H$ in $G$ is defined to be the formal series $\Gr_{G,H,\SS} (t) = \sum_{n=0}^\infty e(n) t^n$.
By the above $e(n)$ is the number of words of length $n$ in the regular language $L$.
So, $\Gr_{G,H,\SS} (t)$ is equal to the growth series of $L$.
As we know that the growth series of a regular language is rational (see Flajolet--Sedgewick \cite{FlaSed1} for example), the formal series $\Gr_{G,H,\SS} (t)$ is rational.

The starting point of this work was the paper \cite{Antol1} where the first author studies the triples $(G,\SS,H)$ where $G$ is a group, $\SS$ a finite generating set of $G$ and $H$ is a subgroup of $G$ such that $(G,\SS)$ has the falsification by fellow traveller property and has fellow projections and/or bounded projections on $H$. 
These notions will be recalled in Section \ref{sec5}.
In particular, it is proved in Antol\'in \cite{Antol1} that, if $(G,\SS)$ has the falsification by fellow traveller property and has fellow projections on $H$, then the language $\Geo (H \backslash G, \SS) = \{U \in (\SS \cup \SS^{-1})^* \mid  \lg(U) = \lg(H\, \ev (U)) \}$ is regular. 
It is also proved in Antol\'in \cite{Antol1} that, if $(G,\SS)$ has the falsification by fellow traveller property and has bounded projections on $H$, then the coset growth series $\Gr_{G,H,\SS} (t)$ is rational. 
Let $G$ be a Garside group, $\SS=\Div(\Delta)\setminus\{1\}$ and let $H$ be a parabolic subgroup of $G$.
We know by Holt \cite{Holt1} that $(G,\SS)$ has the falsification by fellow traveller property.
So, it is natural to ask whether $G$ has fellow projections and/or bounded projections on $H$.

In the present paper we show that $(G,\SS)$ has fellow projections but does not have bounded projections on $H$ (if $H \neq G$) (see Corollary \ref{corl5_4} and Theorem \ref{thm5_5}).
This is the first known example of a triple $(G,\SS,H)$ where $(G,\SS)$ has the falsification by fellow traveller property, has fellow projections on $H$, and does not have bounded projections on $H$. 
On the other hand, note that, by the results of Section \ref{sec4}, the coset growth series $\Gr_{G,H,\SS} (t)$ is rational in our case, and we do not know if there exists such an example with a non-rational coset growth series.
Note also that the transversal $T$ defined and studied in Section \ref{sec3} is an important tool in the proof of Theorem \ref{thm5_5}.


\section{Preliminaries}\label{sec2}

Let $G$ be a Garside group with Garside structure $(G, M, \Delta)$.
Recall that $\SS = \Div (\Delta) \setminus \{1\}$ is the generating system of $G$ that will be considered in the present paper and $\lg = \lg_\SS$ denotes the word length with respect to $\SS$. 
The \emph{left greedy normal form} of an element $a \in M$ is defined to be the unique expression $a = u_1 u_2 \cdots u_p$ of $a$ over $\SS$ such that $u_p \neq 1$ and $(u_i \cdots u_p) \wedge_L \Delta = u_i$ for all $i \in \{1, \dots, p\}$.
We define the \emph{right greedy normal form} of $a$ in a similar way. 
The following two theorems contain fundamental results on Garside groups.

\begin{thm}[Dehornoy--Paris \cite{DehPar1}, Dehornoy \cite{Dehor1}]\label{thm2_1}
\begin{itemize}
\item[(1)]
Let $a \in M$, let $a = u_1 
\allowbreak
u_2 \cdots u_p$ be its left greedy normal form and let $a=u_q' \cdots u_2'u_1'$ be its right greedy normal form.
Then $p=q = \lg (a)$.
\item[(2)]
Let $\alpha \in G$.
There exists a unique pair $(a,b) \in M \times M$ such that $\alpha = b^{-1} a$ and $a \wedge_L b = 1$.
 Similarly, there exists a unique pair $(a',b') \in M \times M$ such that $\alpha = a'b'^{-1}$ and $a' \wedge_R b'=1$.
In this case we have $\lg (\alpha) = \lg(a) + \lg(b) = \lg(a') + \lg(b')$.
\end{itemize}
\end{thm}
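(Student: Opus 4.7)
My plan is to establish part (1) first, then leverage it to obtain part (2).

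For part (1), existence of the left greedy normal form is constructive: set $u_1 = a \wedge_L \Delta$, write $a = u_1 a^{(1)}$ in $M$, and recurse on $a^{(1)}$; the recursion terminates by the Noetherian hypothesis on $M$, and uniqueness is built into the greedy condition $(u_i \cdots u_p) \wedge_L \Delta = u_i$. The nontrivial content is the equality $p = \lg(a)$. The inequality $\lg(a) \le p$ is immediate. For the reverse I would use the standard \emph{left-weighted rewriting} on adjacent simple factors: given a pair $(s,t)$ with $s,t \in \SS$, replace it by $(s',t')$ where $s' = st \wedge_L \Delta$ and $t' = (s')^{-1} st$. One checks $s \le_L s'$, that $t' \in \SS \cup \{1\}$, and hence that applying this move to any length-$k$ expression for $a$ produces another expression of length at most $k$. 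A Noetherian argument, tracking the chain of meets of prefixes with $\Delta$, shows that iterating the move terminates at the left-greedy form, which by uniqueness must be $u_1 \cdots u_p$. This forces $p \le k$. The right greedy normal form is handled symmetrically by applying the same argument in the opposite monoid $M^{\mathrm{op}}$, which carries a Garside structure with the same word length.

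For part (2), existence of a pair $(a,b)$ with $\alpha = b^{-1} a$ and $a \wedge_L b = 1$ is obtained as follows. Since $\Div(\Delta)$ generates $G$, there exists $N \ge 0$ with $\Delta^N \alpha \in M$, so $\alpha = (\Delta^N)^{-1} (\Delta^N \alpha) = b_0^{-1} a_0$ with $a_0, b_0 \in M$; setting $d = a_0 \wedge_L b_0$, cancelling $d$ from both factors yields the desired pair. For uniqueness, suppose $b_1^{-1} a_1 = b_2^{-1} a_2$ with each $a_i \wedge_L b_i = 1$. Rewrite this as $b_2 a_1 = b_1 a_2$ in $M$; using the lattice properties of $(M, \le_L)$, cancellation, and the coprimality hypothesis, one derives $b_1 \le_L b_2$ and symmetrically $b_2 \le_L b_1$, hence $b_1 = b_2$ and then $a_1 = a_2$. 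The symmetric statement for the right-coprime pair $(a', b')$ with $\alpha = a' b'^{-1}$ is entirely analogous, working with $\le_R$ throughout.

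For the length identity $\lg(\alpha) = \lg(a) + \lg(b)$, the upper bound follows by juxtaposing the greedy normal forms of $b$ (written in inverses) and of $a$, giving an expression of $\alpha$ of length $\lg(a) + \lg(b)$ over $\SS \cup \SS^{-1}$. For the lower bound, start with a minimal expression $\alpha = t_1^{\epsilon_1} \cdots t_m^{\epsilon_m}$ with $t_i \in \SS$ and $\epsilon_i = \pm 1$, and use the key \emph{shuffle identity}: for any $s, t \in \SS$, one has $s \cdot t^{-1} = t'^{-1} s'$ where $t'$ and $s'$ are defined from the lattice structure of $\Div(\Delta)$ (essentially $t' = t \vee_L s \cdot s^{-1}$ recast as a simple element). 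This move migrates a negative letter past a positive one without changing the number of letters. Iterating turns the word into the form $b'^{-1} a'$ with $\lg(a') + \lg(b') \le m$; extracting the common left meet $d = a' \wedge_L b'$ and applying (1) to $a := d^{-1} a'$ and $b := d^{-1} b'$ preserves the inequality, yielding $\lg(a) + \lg(b) \le \lg(\alpha)$.

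The main obstacle is the length-minimality of the greedy normal form in part (1); once that is in hand, part (2) reduces mechanically to the lattice axioms and cancellativity. The delicate step in (1) is verifying that the rewriting rule is simultaneously length-non-increasing and convergent to the greedy form. The required invariant is that, along the rewriting, the prefix products meet $\Delta$ in a sequence that only grows in $\le_L$, so the Noetherian property terminates the process at exactly the canonical form produced by the iterative construction.
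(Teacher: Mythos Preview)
The paper does not supply its own proof of this theorem: it is stated with attribution to Dehornoy--Paris and Dehornoy and then used as a black box throughout. So there is no in-paper argument to compare your proposal against.

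That said, your outline is essentially the standard argument one finds in those references, and it is correct in spirit. A couple of points where the sketch wobbles: in the shuffle identity you want $s\,t^{-1} = t'^{-1} s'$ with $t' s = s' t = s \vee_R t$ (the \emph{right} join, not $\vee_L$), and then $s', t' \in \Div(\Delta)$ because $s \vee_R t \le_R \Delta$ and $\Delta$ is balanced; your formula ``$t' = t \vee_L s \cdot s^{-1}$'' is garbled. Also, in the uniqueness step for part~(2) you should be explicit that from $b_2 b_1^{-1} = a_2 a_1^{-1} \in M \cap M^{-1}$ one does \emph{not} immediately get the conclusion; rather, from $b_2 b_1^{-1} \in G$ one passes to a common left multiple in $M$ and uses the coprimality $a_i \wedge_L b_i = 1$ together with the lattice identity $(b_1 \vee_L b_2) = b_1 (b_1^{-1}(b_1 \vee_L b_2))$ to force $b_1 = b_2$. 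These are routine to fill in, and the overall strategy---greedy rewriting for (1), fraction reduction plus letter-shuffling for (2)---is exactly how the cited sources proceed.
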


The expressions $\alpha = b^{-1} a$ and $\alpha = a' b'^{-1}$ given in Theorem \ref{thm2_1}\,(2) are called \emph{left orthogonal form} of $\alpha$ and \emph{right orthogonal form} of $\alpha$, respectively. 
Let $\alpha \in G$ and let $\alpha = b^{-1} a$ be its left orthogonal form.
Let $a = u_1 u_2 \cdots u_p$ and $b = v_1 v_2 \cdots v_q$ be the left greedy normal forms of $a$ and $b$, respectively.
Then the expression $\alpha = v_q^{-1} \cdots v_2^{-1} v_1^{-1} u_1 u_2 \cdots u_p$ is called the \emph{left greedy normal form} of $\alpha$.
Note that, by Theorem \ref{thm2_1}, $\lg (\alpha) = p+q$.
We define the \emph{right greedy normal form} of an element of $G$ in a similar way.

An element $a \in M$ is called \emph{unmovable} if $\Delta \not \le_L a$ or, equivalently, if $\Delta \not \le_R a$.
On the other hand, we denote by $\Phi : G \to G$, $\alpha \mapsto \Delta \alpha \Delta^{-1}$, the conjugation by $\Delta$.
Note that $\Phi (\SS) = \SS$ and $\Phi (M) = M$, and therefore $\Phi$ preserves $\le_L$ and $\le_R$ and thus, it commutes with  $\wedge_L$, $\vee_L$, $\wedge_R$ and $\vee_R$.

\begin{thm}[Dehornoy--Paris \cite{DehPar1}, Dehornoy \cite{Dehor1}]\label{thm2_2}
Let $\alpha \in G$.
There exists a unique pair $(a,k) \in M \times \Z$ such that $a$ is unmovable and $\alpha = \Delta^k a$.
Similarly, there exists a unique pair $(a',k') \in M \times \Z$ such $a'$ is unmovable and $\alpha = a' \Delta^{k'}$, where $k'=k$ and $a' = \Phi^k(a)$.
\end{thm}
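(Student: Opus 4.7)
The plan is to establish the left decomposition $\alpha=\Delta^k a$ first and then deduce the right one by conjugating through a power of $\Delta$.

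For existence, I use that $\Div(\Delta)$ generates $G$ as a group, so $\alpha$ can be expressed as a word $s_1^{\epsilon_1}\cdots s_m^{\epsilon_m}$ with $s_i\in \SS$ and $\epsilon_i\in\{\pm1\}$. For each $s\in \SS$, the relation $s\le_R\Delta$ (which holds because $\Div(\Delta)=\Div_R(\Delta)$) gives $\partial s:=\Delta s^{-1}\in M$, hence $s^{-1}=\Delta^{-1}\partial s$ factors as $\Delta^{-1}$ times a positive element. Applying the commutation $\Delta^{-1}t=\Phi^{-1}(t)\Delta^{-1}$ for $t\in M$ (together with $\Phi^{-1}(M)=M$) repeatedly, I push every $\Delta^{-1}$ to the left of the word, reaching a presentation $\alpha=\Delta^{-n}c$ with $n\ge 0$ and $c\in M$. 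Then, whenever $\Delta\le_L c$, I factor $c=\Delta c'$ and rewrite $\alpha=\Delta^{1-n}c'$; the Noetherian hypothesis on $M$ terminates this iteration, yielding $\alpha=\Delta^k a$ with $k\in\Z$ and $a\in M$ unmovable.

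For uniqueness, suppose $\Delta^{k_1}a_1=\Delta^{k_2}a_2$ with $a_1,a_2\in M$ unmovable, and say $k_1\le k_2$. Then $a_1=\Delta^{k_2-k_1}a_2\in M$, and if $k_2>k_1$ this forces $\Delta\le_L a_1$, contradicting unmovability. So $k_1=k_2$, and cancellation in $G$ gives $a_1=a_2$. For the right form, I use the identity $\alpha=\Delta^k a=(\Delta^k a\Delta^{-k})\Delta^k=\Phi^k(a)\Delta^k$: since $\Phi$ preserves $M$, fixes $\Delta$, and therefore preserves $\le_L$, the element $\Phi^k(a)\in M$ is again unmovable. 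The symmetric version of the existence-and-uniqueness argument establishes a right decomposition $\alpha=a'\Delta^{k'}$, and comparing it with $\alpha=\Phi^k(a)\Delta^k$ forces $k'=k$ and $a'=\Phi^k(a)$.

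The main obstacle I expect is justifying termination of the stripping procedure on the left. Concretely, I must rule out an infinite chain $c_0>c_1>c_2>\cdots$ in $(M,\le_L)$ with $c_i=\Delta c_{i+1}$ at each step; this should follow from the Noetherian hypothesis, since each peel-off strips at least one non-trivial factor (indeed $\Delta$ itself decomposes as a product of several simple elements). Once termination is in place, the rest of the argument is formal manipulation using the commutation $t\Delta=\Delta\Phi^{-1}(t)$ and the self-duality between the left and right sides of the Garside structure.
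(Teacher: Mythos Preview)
The paper does not prove this theorem: it is quoted as a known result with attribution to Dehornoy--Paris and Dehornoy in the theorem header, and the text moves directly on to naming the resulting expressions the left and right $\Delta$-forms. So there is no in-paper proof to compare against.

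Your argument is the standard one and is correct. The only step worth tightening is the termination of the $\Delta$-stripping: for the fixed element $c\in M$ obtained after pushing all $\Delta^{-1}$'s to the left, Noetherianity gives an integer $n$ bounding the number of non-trivial factors in any decomposition of $c$; if one could write $c=\Delta^j c_j$ with $c_j\in M$ for arbitrarily large $j$, then (since $\Delta\neq 1$) $c$ would admit a product of $j$ non-trivial factors, contradicting the bound. Hence the iteration halts at some unmovable $a$. With that in place, the uniqueness argument and the passage to the right form via $\alpha=\Phi^k(a)\,\Delta^k$ are exactly as you wrote them.
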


The expressions $\alpha = \Delta^k a$ and $\alpha = a' \Delta^{k'}$ given in Theorem \ref{thm2_2} are called the \emph{left $\Delta$-form} of $\alpha$ and the \emph{right $\Delta$-form} of $\alpha$, respectively.
Note that $a' \Delta^{k'} = \Phi^{-k}(\Delta^k a)$. 
We denote by $\sigma : \Div (\Delta) \to \Div (\Delta)$ the map such that $\Delta = u\, \sigma (u)$ for all $u \in \Div (\Delta)$.
Note that $\sigma (\Delta) = 1$ and $\sigma (1) = \Delta$.
The following is proved in Picantin \cite[Lemma 5.1]{Pican1}. 

\begin{prop}[Picantin \cite{Pican1}]\label{prop2_3}
Let $\alpha \in G$ and let $\alpha = v_q^{-1} \cdots v_2^{-1} v_1^{-1} u_1 u_2 \cdots u_p$ be its left greedy normal form.
\begin{itemize}
\item[(1)]
Suppose that $q=0$.
Let $r \in \{0,1, \dots, p\}$ such that $u_r = \Delta$ and $u_{r+1} \neq \Delta$.
Set $c = u_{r+1} \cdots u_p$.
Then the left $\Delta$-form of $\alpha$ is $\alpha = \Delta^r c$, and the left greedy normal form of $c$ is $c = u_{r+1} \cdots u_p$.
\item[(2)]
Suppose that $p=0$.
Let $r \in \{0,1, \dots, q\}$ such that $v_r = \Delta$ and $v_{r+1} \neq \Delta$.
For each $i \in \{r+1, \dots, q\}$ we set $w_i = \Phi^i (\sigma (v_i))$.
Let $c = w_q \cdots w_{r+1}$.
Then the left $\Delta$-form of $\alpha$ is $\alpha = \Delta^{-q} c$, and the left greedy normal form of $c$ is $c = w_q \cdots w_{r+1}$.
\item[(3)]
Suppose that $p \ge 1$ and $q \ge 1$.
For each $i \in \{1, \dots, q\}$ we set $w_i = \Phi^i (\sigma (v_i))$.
Let $c = w_q \cdots w_1 u_1 \cdots u_p$.
Then the left $\Delta$-form of $\alpha$ is $\alpha = \Delta^{-q} c$, and the left greedy normal form of $c$ is $c = w_q \cdots w_1 u_1 \cdots u_p$.
\end{itemize}
\end{prop}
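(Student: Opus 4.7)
My plan is to reduce everything to two algebraic identities and one structural lemma. The identities are $u^{-1} = \sigma(u)\,\Delta^{-1}$ for every simple element $u$ (immediate from $\Delta = u\,\sigma(u)$) and $\beta\,\Delta^{-1} = \Delta^{-1}\,\Phi(\beta)$ (immediate from the definition of $\Phi$), together with the bookkeeping identities $\sigma \circ \Phi = \Phi \circ \sigma$ and $\sigma \circ \sigma = \Phi^{-1}$, both of which follow directly from $\sigma(u) = u^{-1}\Delta$. The structural lemma to be proved first is the following: if $v_1 v_2 \cdots v_q$ is a left greedy normal form and $v_r = \Delta$, then $v_i = \Delta$ for every $i \le r$. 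To establish this I would use $v_{r-1}\,v_r = v_{r-1}\,\Delta = \Delta\,\Phi^{-1}(v_{r-1})$ and iterate to obtain $\Delta \le_L v_i \cdots v_q$ for every $i \le r$, after which the greedy condition $v_i = (v_i \cdots v_q) \wedge_L \Delta$ immediately forces $v_i = \Delta$.

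With this lemma in hand, part (1) is one line: it forces $u_1 = \cdots = u_r = \Delta$, so $\alpha = \Delta^r c$ with $c = u_{r+1} \cdots u_p$; the greedy conditions for $c$ are inherited from those for $a$, and $u_{r+1} \ne \Delta$ ensures $c$ is unmovable, so by the uniqueness in Theorem \ref{thm2_2} this is the left $\Delta$-form. For part (2) I would compute
\[ \alpha = v_q^{-1} \cdots v_1^{-1} = \sigma(v_q)\,\Delta^{-1}\,\sigma(v_{q-1})\,\Delta^{-1}\cdots \sigma(v_1)\,\Delta^{-1} \]
and migrate the $q$ occurrences of $\Delta^{-1}$ to the far left via $\beta\,\Delta^{-1} = \Delta^{-1}\,\Phi(\beta)$, obtaining $\alpha = \Delta^{-q}\,w_q w_{q-1} \cdots w_1$ with $w_i = \Phi^i(\sigma(v_i))$. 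The lemma now yields $w_1 = \cdots = w_r = 1$, collapsing the expression to $\alpha = \Delta^{-q} c$. To identify this with the left $\Delta$-form it remains to check that $c = w_q \cdots w_{r+1}$ is its own left greedy normal form and that $w_q \ne \Delta$: non-triviality of $w_{r+1}$ comes from $v_{r+1} \ne \Delta$; using $\sigma(w_{i+1}) = \Phi^i(v_{i+1})$ (a short $\sigma$-$\Phi$ computation) the greedy condition at position $i$ reduces to $\Phi^i(v_{i+1} \wedge_L \sigma(v_i)) = 1$, which is exactly the greedy condition for $v_1 \cdots v_q$; and $w_q \ne \Delta$ holds because $v_q \ne 1$.

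Part (3) follows the same template, producing $\alpha = \Delta^{-q}\,w_q \cdots w_1 u_1 \cdots u_p$. Here I would first rule out the degenerate case: in the orthogonal decomposition $\alpha = b^{-1} a$ with $a,b \ne 1$, the relation $\Delta \le_L b$ is impossible, since it would force $u_1 \le_L \Delta \le_L b$ and so $u_1 \le_L a \wedge_L b = 1$, contradicting $u_1 \ne 1$; the lemma then guarantees no $v_i$ equals $\Delta$ and every $w_i$ is non-trivial. The greedy conditions inside the $w$-block and inside the $u$-block are handled as in part (2) and as given for $a$ respectively; the only new verification is at the junction between the two blocks, where a short computation gives $\sigma(w_1) = \Phi(\sigma(\sigma(v_1))) = v_1$, whence $\sigma(w_1) \wedge_L u_1 = v_1 \wedge_L u_1 \le_L b \wedge_L a = 1$. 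Unmovability of $c$ again follows from $w_q \ne \Delta$. The step that I expect to require the most care is this bookkeeping of the $\Delta^{-1}$-migration: indices, powers of $\Phi$, and evaluations of $\sigma$ must all be tracked in parallel, and the whole computation only drops out cleanly because of the matching identities $\sigma\Phi = \Phi\sigma$ and $\sigma^2 = \Phi^{-1}$.
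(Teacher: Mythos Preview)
Your argument is correct. The paper does not actually supply a proof of Proposition~\ref{prop2_3}; it simply attributes the result to Picantin \cite[Lemma~5.1]{Pican1}, so there is no ``paper's own proof'' to compare against. Your write-up fills this gap with a self-contained computation: the identities $u^{-1}=\sigma(u)\Delta^{-1}$, $\beta\Delta^{-1}=\Delta^{-1}\Phi(\beta)$, $\sigma\Phi=\Phi\sigma$, and $\sigma^2=\Phi^{-1}$ are all correct and do exactly the bookkeeping you describe; the structural lemma (that $v_r=\Delta$ forces $v_i=\Delta$ for $i\le r$) follows from $v_i\cdots v_{r-1}\Delta=\Delta\,\Phi^{-1}(v_i\cdots v_{r-1})$ together with the greedy condition $v_i=(v_i\cdots v_q)\wedge_L\Delta$; and your verifications of the greedy conditions for $c$ via Theorem~\ref{thm4_1}, including the junction identity $\sigma(w_1)=v_1$, are accurate. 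The only place to be slightly more careful in exposition is the degenerate subcases $r=p$ (resp.\ $r=q$) where $c=1$, but these go through since $1$ is unmovable and has empty greedy normal form.
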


Each of the cases of the proposition above  correspond to each of the cases of the corollary below.
\begin{corl}\label{corl2_4}
Let $\alpha \in G$ and let $\alpha = \Delta^p a$ be its left $\Delta$-form.
\begin{itemize}
\item[(1)]
If $p \ge 0$, then $\lg (\alpha) = \lg(a) + p$.
\item[(2)]
If $p \le -\lg(a)$, then $\lg (\alpha) = -p$.
\item[(3)]
If $- \lg(a) \le p \le 0$, then $\lg (\alpha) = \lg(a)$.
\end{itemize}
Summarizing $\lg (\alpha)= \max( \lg(a)+p, -p, \lg(a))$.
\end{corl}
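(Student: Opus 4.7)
The plan is to deduce Corollary \ref{corl2_4} directly from Proposition \ref{prop2_3}, combined with the length formula from Theorem \ref{thm2_1}. I would write the left greedy normal form of $\alpha$ as $\alpha = v_q^{-1}\cdots v_1^{-1}u_1 \cdots u_m$, so that $\lg(\alpha) = m + q$ by Theorem \ref{thm2_1}. The uniqueness of the left $\Delta$-form (Theorem \ref{thm2_2}) forces the pair $(p,a)$ in $\alpha = \Delta^p a$ to coincide with the pair explicitly computed in the relevant case of Proposition \ref{prop2_3}; consequently the whole statement reduces to reading off $p$ and $\lg(a)$ in each case and comparing with $m+q$.

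Next I would match each of the three cases of Proposition \ref{prop2_3} with one of the three ranges of the corollary. If $m \ge 1$ and $q \ge 1$, Proposition \ref{prop2_3}(3) yields $p = -q$ and $\lg(a) = m + q$; here $-\lg(a) \le p \le 0$ and $\lg(\alpha) = m + q = \lg(a)$, matching case (3). If $q = 0$, Proposition \ref{prop2_3}(1) yields $p = r \ge 0$ and $\lg(a) = m - r$, where $r$ is the number of leading $\Delta$'s in the positive part; thus $\lg(\alpha) = m = \lg(a) + p$, matching case (1). If $m = 0$ and $q \ge 1$, Proposition \ref{prop2_3}(2) yields $p = -q$ and $\lg(a) = q - r$ with $0 \le r \le q$; thus $p = -q \le -(q-r) = -\lg(a)$ and $\lg(\alpha) = q = -p$, matching case (2). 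The degenerate subcase $m = q = 0$ falls under case (1) with $p = \lg(a) = 0$.

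The only subtlety worth flagging is that the three ranges of $p$ in the statement of the corollary overlap at the boundary values $p = 0$ and $p = -\lg(a)$; at these points the three candidate formulas agree, so there is no ambiguity. The concluding identity $\lg(\alpha) = \max(\lg(a)+p,\, -p,\, \lg(a))$ is then a purely arithmetic verification: in each of the ranges $p \ge 0$, $-\lg(a) \le p \le 0$, and $p \le -\lg(a)$, one checks that the value singled out by (1), (2), respectively (3), dominates the remaining two quantities among $\lg(a)+p$, $-p$, $\lg(a)$. I anticipate no real obstacle, since all the structural content already sits in Proposition \ref{prop2_3}; this corollary is essentially its repackaging in terms of word length with respect to $\SS$.
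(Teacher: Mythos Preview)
Your proposal is correct and follows exactly the approach the paper indicates: the paper simply remarks that each case of Proposition \ref{prop2_3} corresponds to each case of the corollary, and your argument spells out that correspondence explicitly, using Theorem \ref{thm2_1} and the uniqueness in Theorem \ref{thm2_2}. There is nothing to add.
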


The following theorem contains fundamental results on parabolic subgroups of Garside groups.

\begin{thm}[Godelle \cite{Godel1}]\label{thm2_5}
Let $(H, N, \delta)$ be a parabolic substructure of $(G, M, \Delta)$.
\begin{itemize}
\item[(1)]
$N= H \cap M$.
\item[(2)]
$H$ is a Garside group with Garside structure $(H, N, \delta)$.
\item[(3)]
Let $a \in N$, let $a = u_1 u_2 \cdots u_p$ be its left greedy normal form, and let $a = u_p' \cdots u_2' u_1'$ be its right greedy normal form with respect to $(G, M, \Delta)$.
Then $u_i,u_i' \in \Div (\delta)$ for all $i \in \{1, \dots, p\}$, $a = u_1 u_2 \cdots u_p$ is the left greedy normal form of $a$ with respect to $(H,N, \delta)$, and $a = u_p' \cdots u_2' u_1'$ is its right greedy normal form with respect to $(H, N, \delta)$.
\item[(4)]
Let $\alpha, \beta \in H$ and $\gamma \in G$.
If $\alpha \le_L \gamma \le_L \beta$, then $\gamma \in H$.
Similarly, if $\alpha \le_R \gamma \le_R \beta$, then $\gamma \in H$.
\item[(5)]
Let $\alpha, \beta \in H$.
Then $\alpha \wedge_L \beta, \alpha \vee_L \beta \in H$ and $\alpha \wedge_R \beta , \alpha \vee_R \beta \in H$.
\item[(6)]
Let $\alpha \in H$ and let $\alpha = b^{-1} a$ (resp. $\alpha = a' b'^{-1}$)  be its left orthogonal form (resp. be its right orthogonal form) with respect to $(G, M, \Delta)$.
Then $a,b \in N$ (resp. $a',b' \in N$) and $\alpha = b^{-1} a$ (resp. $\alpha = a' b'^{-1}$) is the left orthogonal form (resp. the right orthogonal form) of $\alpha$ with respect to $(H, N, \delta)$.
\end{itemize}
\end{thm}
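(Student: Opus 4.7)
The plan is to leverage the observation that $\Div(\delta)$ is closed under all four lattice operations of $G$: since $\delta$ is balanced, $\Div(\delta) = \Div_L(\delta) = \Div_R(\delta)$, so for $u, v \in \Div(\delta)$ both $u \wedge_L v$ and $u \vee_L v$ lie below $\delta$ under $\le_L$ (and similarly for $\wedge_R, \vee_R$). The six parts will flow from a single \emph{normalization lemma}: every $\alpha \in H$ can be written as $\alpha = b^{-1} a$ with $a, b \in N$.

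I would prove the normalization lemma by the classical push-negatives-left trick. Starting from a word $c_1^{\epsilon_1} \cdots c_n^{\epsilon_n}$ over $\Div(\delta) \cup \Div(\delta)^{-1}$ representing $\alpha$, one repeatedly eliminates patterns $c\, c'^{-1}$ with $c, c' \in \Div(\delta)$ by replacing them by $e^{-1} e'$, where $d := c \vee_R c' = ec = e'c'$. The factors $e, e'$ satisfy $e, e' \le_L d \le_L \delta$ and hence lie in $\Div(\delta)$, and one checks $e^{-1} e' = c\, c'^{-1}$ directly. Iterating clears negative letters to the front, giving the desired decomposition with $a, b$ products of elements of $\Div(\delta)$, hence in $N$. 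From this, part (1) is immediate ($H \cap M$ elements have $b = 1$ in their orthogonal form, hence lie in $N$), and part (6) follows by dividing out the common $\le_L$-factor $a \wedge_L b$ at the end of the normalization. Part (3) is then a uniqueness statement: treat $N$ as a Garside monoid with simples $\Div(\delta)$, take the greedy factorization there, and verify that greediness in $(H, N, \delta)$ implies greediness in $(G, M, \Delta)$ for elements of $N$; the equivalence hinges on the identity $a \wedge_L \Delta = a \wedge_L \delta$ for $a \in N$, which uses the lattice closure together with left-divisor-closure of $N$ inside $M$.

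Parts (4) and (5) follow by invoking (6) and tracking decompositions through the given inequalities and lattice operations; and (2) assembles the pieces: $N$ is Noetherian as a submonoid of $M$, $\Div_N(\delta) = N \cap \Div(\delta) = \Div(\delta)$ is finite and generates $H$ and $N$, and $(H, \le_L)$ is a lattice by (5). The principal obstacle is the circular appearance of (4)--(6): each is natural to invoke in the proof of the others. I would resolve this by strengthening the normalization lemma so that it directly yields closure of $N$ under left and right divisors in $M$, from which (1), (3), (4), (5), (6) unfold in a clean cascade and (2) is the concluding summary.
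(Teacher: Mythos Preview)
The paper does not prove Theorem~\ref{thm2_5}; it is quoted from Godelle~\cite{Godel1} and used as a black box (the paper even derives divisor-closure of $N$ as a remark \emph{after} the theorem, from part~(4)). So there is no in-paper proof to compare against.

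Your outline is broadly sound and is in the spirit of Godelle's original argument: the closure of $\Div(\delta)$ under all four lattice operations is exactly the engine that makes the push-negatives-left rewriting stay inside $N$, and this gives the normalization lemma and hence (6). The one place where your sketch is genuinely incomplete is the step you flag yourself: breaking the circularity among (1), (3), (4), (5), (6) requires proving, \emph{independently} of these items, that $N$ is closed under left and right division in $M$ (equivalently, that $a\wedge_L\Delta=a\wedge_L\delta$ for every $a\in N$). Your proposal to ``strengthen the normalization lemma'' to yield this is the right instinct, but you have not said how; the push-negatives-left trick by itself does not give divisor-closure. The standard route is an induction on the length of a product $u_1\cdots u_p$ with $u_i\in\Div(\delta)$, showing at each stage that the first greedy factor $u_1\cdots u_p\wedge_L\Delta$ already lies in $\Div(\delta)$; this uses the parabolic hypothesis $\Div(\delta)=\Div(\Delta)\cap M_\delta$ in an essential way (it fails for a general balanced $\delta$). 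Once that lemma is in hand, your cascade (1)$\Rightarrow$(3)$\Rightarrow$(4)$\Rightarrow$(5)$\Rightarrow$(6)$\Rightarrow$(2) goes through cleanly.
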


The following fact will be used frequently.
\begin{rem}
Let $(H, N, \delta)$ be a parabolic substructure of $(G, M, \Delta)$. 
Let $a,b\in M$ such that $ab\in N$, then $a,b\in N$. 
Indeed, since $ab\in N$, $1\le_L a\le_L ab$ and by the above $a\in H\cap M=N$.
Similarly, $b \le_R ab$ and $1\le_R b \le_R ab$ and therefore $b\in H\cap M=N$.
\end{rem}


\section{Transversal}\label{sec3}

>From now on we fix a Garside group $G$ with Garside structure $(G, M, \Delta)$ and a parabolic substructure $(H, N, \delta)$.
Our goal in the present section is to define a set $T$, to show that $T$ is a transversal of $H$ in $G$, and to show that $\lg (\theta) = \lg (H \theta)$ for all $\theta \in T$.
In order to define the set $T$ we need the following.

\begin{lem}[Dehornoy \cite{Dehor2}]\label{lem3_1}
Let $a \in M$.
There exists a unique $b \in N$ such that $\{ c \in N \mid c \le_L a \} = \{ c \in N \mid c \le_L b \}$.
\end{lem}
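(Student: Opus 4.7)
My plan is to take $b$ to be the $\le_L$-maximum of the set $D = \{c \in N \mid c \le_L a\}$, and then verify both the stated equality of divisor sets and uniqueness. The key structural input is Theorem~\ref{thm2_5}(5), which tells us that the subset $N \subseteq H$ is closed under the lattice operations, together with Theorem~\ref{thm2_5}(1) which identifies $N = H \cap M$.

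First I would show that $D$ is closed under $\vee_L$. For $c_1,c_2 \in D$, the element $a$ is a common $\le_L$-upper bound of $c_1$ and $c_2$, so $c_1 \vee_L c_2 \le_L a$. Since $c_1,c_2 \in H$, Theorem~\ref{thm2_5}(5) gives $c_1 \vee_L c_2 \in H$; since $c_1,c_2 \in M$ and $\vee_L$ restricts to a lattice operation on $M$, we also have $c_1 \vee_L c_2 \in M$. Thus $c_1 \vee_L c_2 \in H \cap M = N$, and we conclude $c_1 \vee_L c_2 \in D$.

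To produce a maximum of $D$, I would appeal to Noetherianity of $M$: an infinite strictly $\le_L$-ascending chain $c_1 <_L c_2 <_L \cdots$ in $D$ would furnish, via the non-trivial factorization $a = c_1 \cdot (c_1^{-1}c_2) \cdot (c_2^{-1}c_3) \cdots$, arbitrarily long non-trivial decompositions of $a$, contradicting the Noetherian hypothesis. Hence $D$ has a $\le_L$-maximal element $b$; given any $c \in D$, closure under $\vee_L$ yields $b \vee_L c \in D$ with $b \le_L b \vee_L c$, and maximality forces $b = b \vee_L c$, i.e.\ $c \le_L b$. So $b$ is the maximum of $D$. The inclusion $\{c \in N \mid c \le_L b\} \subseteq \{c \in N \mid c \le_L a\}$ then follows from $b \le_L a$ by transitivity, and the reverse inclusion is the defining property of $b$ as $\max D$.

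For uniqueness, suppose $b_1, b_2 \in N$ both satisfy the conclusion. Since $b_1 \in N$ and $b_1 \le_L b_1$, we have $b_1 \in \{c \in N \mid c \le_L b_1\} = \{c \in N \mid c \le_L b_2\}$, so $b_1 \le_L b_2$; by symmetry $b_2 \le_L b_1$, and antisymmetry of $\le_L$ gives $b_1 = b_2$. The only step that merits any care is the existence of a maximum of $D$; all other ingredients are routine consequences of the lattice structure on $M$ and the absorption properties of $N$ recorded in Theorem~\ref{thm2_5}.
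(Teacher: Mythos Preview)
The paper does not supply its own proof of this lemma; it is attributed to Dehornoy \cite{Dehor2} and simply quoted. Your argument is correct and self-contained: closure of $D$ under $\vee_L$ follows exactly as you say from Theorem~\ref{thm2_5}(1),(5) and the fact that $\vee_L$ restricts to $M$, and Noetherianity of $M$ guarantees a maximal (hence maximum) element of $D$. The only cosmetic point is that the sentence ``$a = c_1 \cdot (c_1^{-1}c_2) \cdot (c_2^{-1}c_3) \cdots$'' should be read as shorthand for ``for each $k$, $a = c_1 (c_1^{-1}c_2) \cdots (c_{k-1}^{-1}c_k)\, d_k$ with $d_k \in M$'', giving at least $k-1$ non-trivial factors; the intended meaning is clear. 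This is the natural lattice-theoretic proof and is essentially what one finds in the Garside literature.
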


The element $b$ of Lemma \ref{lem3_1} is called the \emph{$N$-tail} of $a$ and is denoted by $b = \tau_N (a) = \tau (a)$.
We say that $a$ is \emph{$N$-reduced} if $\tau_N (a) = 1$.
Note that $a$ is $N$-reduced if and only if $a \wedge_L b = 1$ for all $b \in N$, or, equivalently, if and only if $a \wedge_L \delta = 1$.
On the other hand we set $\omega = \delta^{-1} \Delta \in M$.
This element will play a key role in our study.
Recall that $\Phi : G \to G$, $\alpha \mapsto \Delta \alpha \Delta^{-1}$, denotes the conjugation by $\Delta$.
Furthermore, we denote by $\varphi: H \to H$, $\beta \mapsto \delta \beta \delta^{-1}$, the conjugation by $\delta$ in $H$.
Note that $\omega^{-1} \beta \omega = (\Phi^{-1} \circ \varphi)(\beta)$ for all $\beta \in H$.
In particular, $\omega^{-1} N \omega = \Phi^{-1} (N) \subset M$.

Let $\alpha \in G$ and let $\alpha = a \Delta^p$ be its right $\Delta$-form. 
We say that $\alpha$ is \emph{$(H,N)$-reduced} if $a$ is $N$-reduced and either $p=0$ or ($p<0$ and $\omega \not \le_L a$).
Define $T$ to be the set of $(H,N)$-reduced elements of $G$.
The purpose of the present section is to prove the following two theorems.

\begin{thm}\label{thm3_3}
The set $T$ is a right transversal of $H$ in $G$.
\end{thm}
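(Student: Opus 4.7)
The plan is to prove (i) every right coset $H\alpha$ contains an element of $T$, and (ii) distinct elements of $T$ lie in distinct right cosets.

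For (i), given $\alpha\in G$ with right $\Delta$-form $\alpha=a\Delta^p$, I would construct an element of $H\alpha\cap T$ by iterating three coset-preserving moves: (a) if $\tau_N(a)\ne 1$, pre-multiply by $\tau_N(a)^{-1}\in N$, leaving $p$ fixed but making the monoid part $N$-reduced; (b) if $p>0$ and $a$ is $N$-reduced, pre-multiply by $\delta^{-1}\in H$, obtaining $\delta^{-1}\alpha=\omega\Phi^{-1}(a)\Delta^{p-1}$---the monoid part $\omega\Phi^{-1}(a)$ is unmovable because $\Delta\le_L\omega\Phi^{-1}(a)$ would give $\sigma(\omega)=\Phi^{-1}(\delta)\le_L\Phi^{-1}(a)$, i.e.\ $\delta\le_L a$, contradicting $N$-reducedness; (c) if $p<0$ and $\omega\le_L a$, pre-multiply by $\delta\in N$: writing $a=\omega a''$ yields $\delta\alpha=\Phi(a'')\Delta^{p+1}$ with $\Phi(a'')$ unmovable (since $a$ is). Each outer iteration drives $p$ one step toward the admissible range $\{p=0\}\cup\{p<0,\ \omega\not\le_L a\}$, so the procedure terminates.

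For (ii), suppose $\theta_1,\theta_2\in T$ satisfy $\theta_1\theta_2^{-1}\in H$, and write $\theta_i=a_i\Delta^{p_i}$ in right $\Delta$-form. If $p_1=p_2$, the right orthogonal form of $a_1 a_2^{-1}\in H$ is $a_1'(a_2')^{-1}$ with $a_i=a_i'\cdot(a_1\wedge_R a_2)$; Theorem~\ref{thm2_5}(6) gives $a_i'\in N$, and since $a_i'\le_L a_i$ while $a_i$ is $N$-reduced, $a_i'=1$, forcing $a_1=a_2$. Otherwise, WLOG $p_1<p_2\le 0$ and $q:=p_2-p_1>0$, so $p_1<0$ and the $(H,N)$-reducedness of $\theta_1$ gives $\omega\not\le_L a_1$. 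The analogous right-orthogonal-form argument applied to $\theta_1\theta_2^{-1}=a_1\cdot(a_2\Delta^q)^{-1}\in H$ produces $a_2''\in N$ with $a_2\Delta^q=a_2''a_1$. I claim $a_2''=\delta^q$: from $N$-reducedness of $a_1$ via a $\wedge_L$-analysis, $\tau_N(a_2''a_1)=a_2''$; from $N$-reducedness of $a_2$, $\tau_N(a_2\Delta^q)=\delta^q$; equating gives $a_2''=\delta^q$. Substituting into $a_2\Delta^q=a_2''a_1$ yields $a_1=\delta^{-q}a_2\Delta^q$, and the inductive identity $\delta^{-q}\Delta^q=\omega\Phi^{-1}(\omega)\cdots\Phi^{-(q-1)}(\omega)\in M$ shows $\omega^{-1}a_1\in M$, i.e.\ $\omega\le_L a_1$, contradicting $(H,N)$-reducedness of $\theta_1$. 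The main obstacle is the equality $a_2''=\delta^q$: the identification $\tau_N(a_2''a_1)=a_2''$ uses $N$-reducedness of $a_1$ fairly directly, but the identification $\tau_N(a_2\Delta^q)=\delta^q$ is subtler, requiring careful control over how $N$-reducedness of $a_2$ constrains $N$-elements left-dividing the mixed product $a_2\Delta^q$.
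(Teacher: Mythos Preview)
Your outline is correct, and the obstacle you flag is not serious: write $a_2\Delta^q=\Delta^q\Phi^{-q}(a_2)=\delta^q\cdot\omega_1\cdots\omega_q\Phi^{-q}(a_2)$ with $\omega_i=\Phi^{-i+1}(\omega)$; by Lemma~\ref{lem3_7} the right factor is $N$-reduced, and then the general fact ``if $x=ny$ with $n\in N$ and $y$ $N$-reduced then $\tau_N(x)=n$'' (which you already prove when establishing $\tau_N(a_2''a_1)=a_2''$) gives $\tau_N(a_2\Delta^q)=\delta^q$ immediately.

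Compared with the paper: for existence, the paper avoids your three-move iteration by a minimality argument (pick $\alpha$ in the coset with the $\Delta$-exponent closest to $0$, strip $\tau_N$, and use that minimality itself forces $\omega\not\le_L c$); this is shorter and sidesteps the need to verify that moves~(b) and~(c) preserve unmovability and $N$-reducedness at each step. For uniqueness, the paper works with the \emph{left} orthogonal form $\beta=b_2^{-1}b_1$ of the connecting element and never computes any $\tau_N$. Writing $b_1c_1=b_2c_2\Delta^{p_1-p_2}$, the case $p_1\ne p_2$ is disposed of in one line: $\Delta\le_R b_1c_1$ forces $\omega\le_L b_1c_1$, and a single application of Lemma~\ref{lem3_5} gives $\omega\le_L c_1$, contradiction. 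Your route through right orthogonal forms and $\tau_N$ reaches the same contradiction but via Lemma~\ref{lem3_7}, which is itself an induction built on Lemma~\ref{lem3_5}; so the paper's argument is strictly more direct, while yours makes more visible exactly which element of $N$ relates the two transversal candidates.
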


\begin{thm}\label{thm3_4}
We have $\lg (\theta) = \lg (H \theta)$ for all $\theta \in T$.
\end{thm}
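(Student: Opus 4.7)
\medskip

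\noindent\textit{Proof plan.} Fix $\theta \in T$ with right $\Delta$-form $\theta = a \Delta^p$ and an arbitrary $\beta \in H$. I aim to show $\lg(\theta) \le \lg(\beta\theta)$ by comparing the left $\Delta$-forms of $\theta$ and $\beta\theta$ term by term, using the formula $\lg(\alpha) = \max(\sup(\alpha), 0) + \max(-\inf(\alpha), 0)$ implicit in Corollary \ref{corl2_4}.

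\emph{Step 1 (length of $\theta$).} Converting the right $\Delta$-form to the left $\Delta$-form $\theta = \Delta^p \Phi^{-p}(a)$, with $\lg(\Phi^{-p}(a)) = \lg(a)$ since $\Phi$ preserves length, one reads off $\inf(\theta) = p$ and $\sup(\theta) = p + \lg(a)$. The defining conditions of $T$ therefore give $\lg(\theta) = \lg(a)$ when $p = 0$ and $\lg(\theta) = \max(\lg(a), -p)$ when $p < 0$.

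\emph{Step 2 (decomposition of $\beta\theta$).} By Theorem \ref{thm2_5}(6), write $\beta = d^{-1} c$ in left orthogonal form with $c, d \in N$ and $c \wedge_L d = 1$. Put $c a \in M$ in right $\Delta$-form $c a = b \Delta^k$ with $b$ unmovable and $k \ge 0$, so $\lg(c a) = \lg(b) + k$; then $\beta\theta = d^{-1} b \Delta^{k+p}$.

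\emph{Step 3 (key lemma: $d \wedge_L b = 1$).} Set $e = d \wedge_L b$. By Theorem \ref{thm2_5}(4), $e \in N$, and $e \le_L b \le_L c a$. The join $\ell := e \vee_L c$ lies in $N$ by Theorem \ref{thm2_5}(5), and $\ell \le_L c a$ since $c, e \le_L c a$. Writing $\ell = c m$ with $m := c^{-1}\ell$, the relation $c \le_L \ell$ together with $\ell \in H$ and $c \in H$ forces $m \in H \cap M = N$ by Theorem \ref{thm2_5}(1). From $cm \le_L ca$ we deduce $m \le_L a$, and since $a$ is $N$-reduced, $a \wedge_L m = 1$ combined with $m \le_L a$ forces $m = 1$. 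Thus $\ell = c$, so $e \le_L c$; combined with $e \le_L d$ and $c \wedge_L d = 1$, this gives $e = 1$.

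\emph{Step 4 (length formula for $\beta\theta$).} Assuming $H$ is proper (the case $H = G$ is trivial), Theorem \ref{thm2_5}(3) combined with $\Div(\delta) \subsetneq \Div(\Delta)$ forces every $d \in N$ to be $G$-unmovable, since otherwise its greedy first factor would equal $\Delta \notin \Div(\delta)$. Converting $\beta\theta = d^{-1} b \Delta^{k+p}$ via $b\Delta^{k+p} = \Delta^{k+p}\Phi^{-(k+p)}(b)$ and applying Proposition \ref{prop2_3}(3) to the left orthogonal form $\Phi^{-(k+p)}(d)^{-1}\Phi^{-(k+p)}(b)$ yields the left $\Delta$-form of $\beta\theta$ with
\[
\inf(\beta\theta) = k + p - \lg(d), \qquad \sup(\beta\theta) = k + p + \lg(b).
\]

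\emph{Step 5 (conclusion).} It is enough to verify the pair of inequalities $\sup(\beta\theta) \ge \sup(\theta)$ and $-\inf(\beta\theta) \ge -\inf(\theta)$, for then $\lg(\beta\theta) = \max(\sup(\beta\theta),0) + \max(-\inf(\beta\theta),0) \ge \max(\sup(\theta),0) + \max(-\inf(\theta),0) = \lg(\theta)$. The first, $k + \lg(b) \ge \lg(a)$, i.e.\ $\lg(c a) \ge \lg(a)$, is immediate from $a \le_R c a$ (as $(ca)a^{-1} = c \in M$) and the monotonicity of $\lg$ under right-divisibility. The second, $\lg(d) \ge k$, is where the condition $\omega \not\le_L a$ enters crucially (for $p < 0$): via $\Delta = \delta\omega$, each factor of $\Delta$ extracted from $ca$ must be compensated by a corresponding left-divisor lying in $N$ and simultaneously left-dividing $d$ (else one would force $\omega \le_L a$, contradicting the hypothesis); this compensation is bounded by $\lg(d)$. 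The case $p = 0$ actually does not need this second inequality: one directly has $\lg(\beta\theta) \ge \sup(\beta\theta) = k + \lg(b) = \lg(c a) \ge \lg(a) = \lg(\theta)$.

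\emph{Main obstacle.} The delicate point is the bound $k \le \lg(d)$ in Step 5 when $p < 0$. Proving it requires a lattice argument in the spirit of Step 3 but iterated over the $k$ powers of $\Delta$ extracted from $ca$: each extraction must, using $\Delta = \delta\omega$ and the $N$-reducedness of $a$, push an element of $N$ into a common left-divisor of $d$, consuming one factor of $d$'s greedy length. The hypothesis $\omega \not\le_L a$ is precisely what prevents this accounting from overshooting and is thus the heart of the argument; the remaining verifications are bookkeeping with the length formula from Step 4.
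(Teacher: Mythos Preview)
Your overall strategy is sound and, once completed, yields a proof that is arguably cleaner than the paper's. The paper splits the case $\theta\notin M$ according to whether $\beta\in N$ (using the right $\delta$-form $\beta=b\delta^{-k}$ and Lemmas \ref{lem3_9}--\ref{lem3_10}), whereas you use the left orthogonal form $\beta=d^{-1}c$ uniformly. Your Step~3 is exactly the content of Lemma \ref{lem3_8}, and your $\sup/\inf$ bookkeeping via Corollary \ref{corl2_4} is correct.

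The gap is in Step~5, and your diagnosis of it as the ``main obstacle'' is mistaken in a fortunate direction: the inequality $\lg(d)\ge k$ is not delicate at all, because in fact $k=0$ whenever $p<0$. Indeed, if $\Delta\le_L ca$ then in particular $\omega\le_L ca$; since $c\in N$, Lemma \ref{lem3_5} gives $c\vee_L\omega=c\omega$, and since also $c\le_L ca$ we get $c\omega\le_L ca$, hence $\omega\le_L a$, contradicting the hypothesis for $\theta\in T\setminus M$. Thus $ca$ is unmovable (this is precisely Lemma \ref{lem3_9} with $b\leftrightarrow c$), so $b=ca$ and $k=0$. Your iterated ``compensation'' sketch is therefore unnecessary, and the sketch itself is not correct as stated: there is no mechanism forcing extracted $\Delta$-factors to produce left-divisors of $d$ specifically. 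With $k=0$ the second inequality becomes $\inf(\beta\theta)=p-\lg(d)\le p=\inf(\theta)$, which is immediate, and together with $\sup(\beta\theta)=p+\lg(ca)\ge p+\lg(a)=\sup(\theta)$ (Lemma \ref{lem3_6}) your length formula finishes the proof.
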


The following three lemmas are preliminaries to the proofs of Theorem \ref{thm3_3} and Theorem \ref{thm3_4}.

\begin{lem}\label{lem3_5}
Let $b \in N$.
Then $b \wedge_L \omega = 1$ and $b \vee_L \omega = b \omega = \omega b'$, where $b' = (\Phi^{-1} \circ \varphi) (b)$.
\end{lem}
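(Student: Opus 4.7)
First I would dispatch the two easier assertions. The relation $b\omega=\omega b'$ is immediate from the formula $\omega^{-1}\beta\omega=(\Phi^{-1}\circ\varphi)(\beta)$ applied to $\beta=b$, and $b'\in M$ because conjugation by $\delta$ preserves $N\subseteq M$ and $\Phi^{-1}(M)=M$. For $b\wedge_L\omega=1$, set $c=b\wedge_L\omega$: since $1\le_L c\le_L b$ with $1,b\in H$, Theorem~\ref{thm2_5}(4) gives $c\in H\cap M=N$; from $c\le_L\delta^{-1}\Delta$ we get $\delta c\le_L\Delta$, so $\delta c\in N\cap\Div(\Delta)=\Div(\delta)$, whence $\delta c\le_L\delta$ and $c=1$.

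The core of the lemma is the identity $b\vee_L\omega=b\omega$. The inequality $b\vee_L\omega\le_L b\omega$ is clear because both $b$ and $\omega$ left-divide $b\omega$, the latter via $b\omega=\omega b'$ with $b'\in M$. For the reverse, a common left-multiple $\gamma=b\gamma_1\in M$ satisfies $b\omega\le_L\gamma$ iff $\omega\le_L\gamma_1$, so I need to establish the latter. I would proceed by induction on $\lg(b)$, granting the base case $\lg(b)=1$ to be treated separately. For $\lg(b)\ge 2$, write $b=sb_1$ with $s\in\Div(\delta)\setminus\{1\}$ and $\lg(b_1)<\lg(b)$. Then $\gamma$ is a common left-multiple of $s$ and $\omega$, and the base case applied to $s$ gives $s\omega\le_L\gamma$; writing $\gamma=s\omega\nu=sb_1\gamma_1$ and left-cancelling $s$ yields $\omega\nu=b_1\gamma_1$, so $b_1\gamma_1$ is a common multiple of $b_1$ and $\omega$, and the inductive hypothesis gives $\omega\le_L\gamma_1$.

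The base case is the identity $s\vee_L\omega=s\omega$ for $s\in\Div(\delta)\setminus\{1\}$. First, $s\omega\in\Div(\Delta)$: writing $\delta=sd$ with $d\in\Div(\delta)\subseteq N$ gives $s\omega\cdot(\omega^{-1}d\omega)=\delta\omega=\Delta$ with $\omega^{-1}d\omega\in\Phi^{-1}(N)\subseteq M$. Setting $\mu:=s\vee_L\omega\in\Div(\Delta)$, I would next establish the parabolic factorization $\mu=\mu_N\omega$ with $\mu_N\in\Div(\delta)$: from $\omega\le_L\mu$ write $\mu=\omega x$; then $\omega x\le_L\Delta=\omega\cdot(\omega^{-1}\delta\omega)$ gives $x\le_L\omega^{-1}\delta\omega$, and $\mu_N:=\omega x\omega^{-1}$ lies in $\omega\Phi^{-1}(N)\omega^{-1}=N$ and in $\Div(\delta)$. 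Using the first assertion one checks that $\mu_N=\mu\wedge_L\delta$, so $s\le_L\mu$ with $s\in\Div(\delta)$ forces $s\le_L\mu_N$; writing $\mu_N=se$ with $e\in N$ (by the remark at the end of Section~\ref{sec2}) and moving $\omega$ across $e$ via the second assertion gives $\mu=se\omega=s\omega\cdot(\omega^{-1}e\omega)$, whence $s\omega\le_L\mu$, and combined with $\mu\le_L s\omega$ we conclude $\mu=s\omega$.

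The main obstacle is this structural analysis underlying the base case: extracting the parabolic factorization $\mu=\mu_N\omega$ for a simple element $\mu$ having $\omega$ as a left-divisor. This is where the definition $\omega=\delta^{-1}\Delta$ and the defining condition $\Div(\delta)=\Div(\Delta)\cap N$ of a parabolic substructure really come into play; once the factorization is in hand, the rest of the proof assembles cleanly from the first two assertions of the lemma.
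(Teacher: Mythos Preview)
Your proof is correct, and the argument for $b\wedge_L\omega=1$ matches the paper's. For the join identity, however, the paper takes a shorter direct route that bypasses both the induction and the simple-element base case. Writing $y=b\vee_L\omega\le_L b\omega=\omega b'$, the paper factors $y=\omega y_1'$; then $y_1'\le_L b'\in\Phi^{-1}(N)$ and the parabolic property of $\Phi^{-1}(N)$ force $y_1'\in\Phi^{-1}(N)$ and its cofactor $y_2'$ (with $y_1'y_2'=b'$) as well. Conjugating back gives $y_1,y_2\in N$ with $y_1y_2=b$ and $y=y_1\omega$; now $b\le_L y=y_1\omega$ yields $y_2\le_L\omega$, so $y_2=1$ by the first assertion and $y=b\omega$. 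In effect, the paper recognizes that the parabolic-submonoid property of $\Phi^{-1}(N)$ already delivers the needed factorization in one step, whereas your route rebuilds this structure via an induction on $\lg(b)$ together with a dedicated analysis of how simples in $\Div(\delta)$ interact with $\omega$ inside $\Div(\Delta)$. Both arguments ultimately hinge on the first assertion, but the paper's is noticeably more economical.
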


\begin{proof}
Let $x = b \wedge_L \omega$.
First observe that $1 \le_L x \le_L b$ and hence $x\in N$ and $\delta x\in N \subseteq M$.
Since $x \le_L \omega$, we have $\delta x \le_L \delta \omega = \Delta$, hence $\delta x \in \Div (\Delta)\cap N = \Div(\delta)$.
As $\delta x \in \Div(\delta)$ implies that $x^{-1}\in N$, we obtain that $x \in  N \cap N^{-1} = \{1\}$.
This finish the proof of the first claim.

Now, let $y = b \vee_L \omega$.
As previously observed, $b' = (\Phi^{-1} \circ \varphi) (b) = \omega^{-1} b \omega$, hence $b \omega = \omega b'$.
In particular, $b,\omega \le_L b \omega=\omega b'$ and hence $y\le_L b\omega= \omega b'$.
Let $y_1' \in M$ such that $\omega y_1' = y$.
Since $y \le_L \omega b'$, we have $y_1' \le_L b'$, hence there exists $y_2' \in M$ such that $y_1' y_2' = b'$.
We have $b' \in \Phi^{-1} (N)$ and $\Phi^{-1} (N)$ is a parabolic submonoid, hence $y_1', y_2' \in \Phi^{-1}(N)$.
Let $y_1 = (\varphi^{-1} \circ \Phi) (y_1')$ and $y_2 = (\varphi^{-1} \circ \Phi)(y_2')$.
Then $y_1, y_2 \in N$ and $b = y_1 y_2$.
We have $b = y_1 y_2 \le_L y = \omega y_1' = y_1 \omega$, hence $y_2 \le_L \omega$.
Since $y_2 \in N$ and, by the above, $y_2 \wedge_L \omega = 1$, it follows that $y_2 = 1$, hence $y_1 = b$, and therefore $y = b \omega$.
\end{proof}

The following is a direct consequence of Dehornoy--Paris \cite[Lemma 8.6, Lemma 8.7]{DehPar1}.

\begin{lem}\label{lem3_6}
Let $a,b_1,b_2 \in M$.
Then $\lg (b_1 a b_2) \ge \lg (a)$.
\end{lem}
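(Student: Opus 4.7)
The statement reduces immediately to two one-sided inequalities that I expect to be the content of the cited Dehornoy--Paris lemmas: namely, for any $x,y \in M$, both $\lg(xy) \ge \lg(x)$ and $\lg(xy) \ge \lg(y)$. Granting these, the full claim follows from the chain
\[
\lg(b_1 a b_2) \;\ge\; \lg(a b_2) \;\ge\; \lg(a),
\]
where the first inequality applies $\lg(xy) \ge \lg(y)$ with $x = b_1$ and $y = a b_2$, and the second applies $\lg(xy) \ge \lg(x)$ with $x = a$ and $y = b_2$.

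If I wanted to reprove the one-sided inequality $\lg(a) \le \lg(ab)$ (the other being symmetric, via right greedy normal form), the natural route is induction on $\lg(a)$ using left greedy normal forms. Write $a = u_1 \cdots u_p$ and $ab = v_1 \cdots v_n$ in LGNF, so by Theorem \ref{thm2_1}\,(1) one has $\lg(a) = p$ and $\lg(ab) = n$. Since $u_1 = a \wedge_L \Delta$ satisfies $u_1 \le_L a \le_L ab$ and $u_1 \le_L \Delta$, one gets $u_1 \le_L ab \wedge_L \Delta = v_1$. Writing $v_1 = u_1 u_1''$ with $u_1'' \in M$ and setting $a' := u_2 \cdots u_p$ and $(ab)' := v_2 \cdots v_n$, one has $u_1'' \cdot (ab)' = a' \cdot b$, and one hopes to close by induction.

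The main obstacle is precisely this last step: after peeling off $u_1$ versus $v_1$, one is comparing $a'$ (whose LGNF has length $p-1$) to an expression $u_1'' \cdot (ab)'$ that is no longer cleanly of the form "right-multiplication of $a'$ by an element of $M$". The standard fix, which I expect is what [DehPar1, Lemmas 8.6--8.7] carries out, is to bundle both one-sided inequalities into a single induction and exploit the controlled behaviour of LGNF under left- and right-multiplication by simple elements. Since the paper explicitly invokes those lemmas, the proof reduces to the two-line chain displayed above.
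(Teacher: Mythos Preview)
Your proposal is correct and matches the paper's approach exactly: the paper offers no argument beyond declaring the lemma a direct consequence of Dehornoy--Paris \cite[Lemmas 8.6, 8.7]{DehPar1}, and your two-line chain $\lg(b_1 a b_2) \ge \lg(a b_2) \ge \lg(a)$ is precisely that deduction from the one-sided inequalities those lemmas provide. Your additional sketch of how one might reprove those one-sided inequalities is extra (and, as you note, incomplete), but it does not affect the correctness of the main reduction.
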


\begin{lem}\label{lem3_7}
Let $c$ be a $N$-reduced element in $M$, let $k \ge 0$ be an integer, and let $d = \omega_1 \omega_2 \cdots \omega_k\, \Phi^{-k} (c)$, where $\omega_i = \Phi^{-i+1} (\omega)$ for all $i \in \{1, \dots, k\}$.
Then $d$ is $N$-reduced.
\end{lem}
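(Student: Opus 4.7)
My plan is to prove the lemma by induction on $k$. The base case $k=0$ gives $d=c$ (empty product), which is $N$-reduced by hypothesis.

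For the inductive step, I would first establish the recursion
\[
d_{k+1} = \omega\,\Phi^{-1}(d_k).
\]
This is a direct computation: using $\Phi^{-1}(\omega_i) = \Phi^{-1}(\Phi^{-(i-1)}(\omega)) = \omega_{i+1}$ and $\Phi^{-1}(\Phi^{-k}(c)) = \Phi^{-(k+1)}(c)$, one gets $\Phi^{-1}(d_k) = \omega_2 \omega_3 \cdots \omega_{k+1}\, \Phi^{-(k+1)}(c)$, and left-multiplication by $\omega_1 = \omega$ produces $d_{k+1}$. Note in particular that $\Phi^{-1}(d_k) \in M$ (since $\Phi$ preserves $M$ and $d_k \in M$), so $\omega \le_L d_{k+1}$.

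Now assume $d_k$ is $N$-reduced, and let $b \in N$ with $b \le_L d_{k+1}$; the goal is $b=1$. Since $b$ and $\omega$ both left-divide $d_{k+1}$, their join $b \vee_L \omega$ does as well. By Lemma \ref{lem3_5}, $b \vee_L \omega = \omega b'$ with $b' = (\Phi^{-1} \circ \varphi)(b)$. Thus $\omega b' \le_L \omega\,\Phi^{-1}(d_k)$, and left-cancellation in $M$ yields $b' \le_L \Phi^{-1}(d_k)$. Applying the $\le_L$-preserving automorphism $\Phi$ gives $\varphi(b) \le_L d_k$. Since $(H,N,\delta)$ is itself a Garside structure (Theorem \ref{thm2_5}(2)), the analogue of the fact ``$\Phi(M)=M$'' applied inside $H$ shows that $\varphi$ restricts to an automorphism of $N$; hence $\varphi(b) \in N$. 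Because $d_k$ is $N$-reduced, $\varphi(b) = 1$, and therefore $b=1$.

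The only delicate step is the decomposition $b \vee_L \omega = \omega b'$ from Lemma \ref{lem3_5}, which is what lets us shift the problem from $d_{k+1}$ back to $d_k$ after applying $\Phi$; everything else is essentially bookkeeping on how $\Phi$ interacts with the defining product of $d_k$. Spotting the recursion $d_{k+1} = \omega\,\Phi^{-1}(d_k)$ up front is what makes the induction clean.
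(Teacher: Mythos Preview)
Your proof is correct and follows essentially the same approach as the paper: induction on $k$, using Lemma~\ref{lem3_5} to write $b \vee_L \omega = \omega\,(\Phi^{-1}\circ\varphi)(b)$, cancelling $\omega$, and then applying $\Phi$ to reduce to the inductive hypothesis via $\varphi(b) \in N$. The only cosmetic difference is that you package the inductive step through the recursion $d_{k+1} = \omega\,\Phi^{-1}(d_k)$, while the paper works directly with the product $\omega_1\cdots\omega_k\,\Phi^{-k}(c)$ and strips off $\omega_1$.
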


\begin{rem}
Let $k \ge 0$ and let $\omega_i = \Phi^{-i+1} (\omega)$ for all $i \in \{1, \dots, k\}$ as in the above statement.
Then $\omega_1 \cdots \omega_k = \delta^{-k} \Delta^k$.
This equality will be often used in the remainder of the paper.
\end{rem}

\begin{proof}
We argue by induction on $k$.
The case $k = 0$ being trivial, we can assume that $k \ge 1$ and that the inductive hypothesis holds. 
Note that, if $k \ge 1$, then  $\omega  \le_L d$.
Let $b \in N$ such that $b \le_L d$.
In particular, $b \vee_L \omega \leq d$.
By Lemma \ref{lem3_5}, $b \vee_L \omega = \omega \, (\Phi^{-1} \circ \varphi)(b) \le_L \omega_1 \omega_2 \cdots \omega_k \, \Phi^{-k} (c)$, hence $(\Phi^{-1} \circ \varphi) (b) \le_L \omega_2 \cdots \omega_k \, \Phi^{-k} (c)$, and therefore $\varphi(b) \le_L \omega_1 \cdots \omega_{k-1} \, \Phi^{-k+1} (c)$. 
Recall that $\varphi(N) = N$.
By induction it follows that $\varphi (b) = 1$, hence $b = 1$.
\end{proof}

\begin{proof}[Proof of Theorem \ref{thm3_3}]
We start by showing that each right-coset $C$ of $H$ in $G$ contains an element of $T$.
Suppose first that $C$ contains an element $a \in M$.
Let $c \in M$ such that $a = \tau(a) c$.
Then $c \in T$ and $C = Ha = Hc$.
Now, we assume that $C$ contains no element of $M$ and note that elements not in $M$ have right $\Delta$-form $a \Delta^{-p}$ with $p \ge 1$.
Let $\alpha \in C$ and let $\alpha = a \Delta^{-p}$ be the right  $\Delta$-form of $\alpha$.
We choose $\alpha$ so that $p$ is minimal.
Let $c \in M$ such that $a = \tau(a) c$.
Set $\theta = c \Delta^{-p}$.
Since $C = H \alpha = H \theta$, it suffices to show that $\theta \in T$.
The element $c$ is unmovable since $c \le_R a$ and $a$ is unmovable.
Furthermore, $c$ is $N$-reduced since $a = \tau(a) c$.
Suppose that $\omega \le_L c$.
Let $c' \in M$ such that $c = \omega c'$.
Then $\delta \theta = \delta \omega c' \Delta^{-p} = \Phi(c') \, \Delta^{-p+1} \in H \theta = C$, which contradicts the minimality of $p$.
So, $\omega \not \le_L c$, thus $\theta = c \Delta^{-p} \in T$.

Now, we take two elements $\theta_1, \theta_2 \in T$, we suppose that there exists $\beta \in H$ such that $\beta \theta_1 = \theta_2$, and we prove that $\beta = 1$ and $\theta_1 = \theta_2$.
Let $\theta_1 = c_1 \Delta^{-p_1}$ and $\theta_2 = c_2 \Delta^{-p_2}$ be the right $\Delta$-forms of $\theta_1$ and $\theta_2$, respectively. 
By definition, $c_i$ is $N$-reduced, $p_i \ge 0$, and $\omega \not \le_L c_i$ if $p_i \ge 1$, for $i \in \{ 1, 2\}$.
We can assume without loss of generality that $p_1 \ge p_2$.
Let $\beta = b_2^{-1} b_1$ be the left orthogonal form of $\beta$.
Then $b_1 c_1 = b_2 c_2 \Delta^{p_1-p_2}$.
Suppose first that $p_1-p_2 > 0$.
In particular, $p_1 > 0$, hence $\omega \not \le_L c_1$.
Since $\Delta \le_R b_1 c_1$ in this case, we have $\Delta \le_L b_1 c_1$, hence $\omega \le_L b_1 c_1$, thus, by Lemma \ref{lem3_5}, $\omega \vee_L b_1 = b_1 \omega \le_L b_1 c_1$, and therefore $\omega \le_L c_1$: contradiction.
So, $p_1 = p_2$ and $b_1 c_1 = b_2 c_2$.
Let $y = b_1 \vee_L b_2$.
We have $y \in N$, since $b_1, b_2 \in N$ and $N$ is a parabolic submonoid.
Let $y_1 \in M$ such that $y = b_1 y_1$.
Here again, $y_1 \in N$, since $N$ is a parabolic submonoid.
We have $y = b_1 y_1 \le_L b_1 c_1$, since $b_1 c_1 = b_2 c_2$, hence $y_1 \le_L c_1$.
It follows that $y_1 = 1$, since $c_1$ is $N$-reduced, hence $y=b_1$.
Similarly, $y = b_2$, hence $b_1 = b_2 = 1$, because $b_1 \wedge_L b_2 = 1$.
So, $\beta=1$, and $\theta_1 = \theta_2$.
\end{proof}

Given $\theta\in T$ and $\beta\in H$ we will need to understand $\beta \theta$.
The following three lemmas are the cases for $\theta\in T,$ and $\beta \in H$ that will appear in the proofs of Theorems \ref{thm3_4} and \ref{thm5_5}. 

\begin{lem}\label{lem3_8}
Let $c \in T\cap M$ and $\beta \in H$. Let $\beta = b_2^{-1} b_1$ be the left orthogonal form of $\beta$. Then $1= b_2 \wedge_L (b_1c)$ and therefore  $b_2^{-1} (b_1 c)$ is the left orthogonal form of $\beta \theta$.
\end{lem}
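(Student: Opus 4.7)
The plan is to set $y = b_2 \wedge_L (b_1c)$ and show directly that $y = 1$, exploiting the fact that $c$ is $N$-reduced (which follows from $c \in T \cap M$, since its right $\Delta$-form is $c = c\Delta^0$). The main tools will be Theorem \ref{thm2_5}, which guarantees that $b_1, b_2$ lie in $N$ (left orthogonal form of $\beta \in H$), that $N$ is closed under the lattice operations in $M$, and that divisors of elements of $N$ inside $M$ are again in $N$.

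First I would observe $y \in N$: indeed $y \le_L b_2$ and $b_2 \in N \subset M$ give $y \in M$, while $1 \le_L y \le_L b_2$ with $1, b_2 \in H$ gives $y \in H$ by Theorem \ref{thm2_5}(4), so $y \in H \cap M = N$. Then I would bring $b_1$ into play by setting $z = y \vee_L b_1$. Since $y, b_1 \in N$, Theorem \ref{thm2_5}(5) yields $z \in N$, and since $y, b_1 \le_L b_1 c$, we have $z \le_L b_1 c$. Writing $z = b_1 y'$ with $y' \in M$, the remark following Theorem \ref{thm2_5} (if $ab \in N$ with $a,b \in M$ then $a,b \in N$) forces $y' \in N$. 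From $b_1 y' \le_L b_1 c$ I get $y' \le_L c$, and $N$-reducedness of $c$ then gives $y' = 1$, so $z = b_1$, i.e.\ $y \le_L b_1$.

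Combining $y \le_L b_1$ with $y \le_L b_2$ and using $b_1 \wedge_L b_2 = 1$ (from the left orthogonal form of $\beta$) I conclude $y = 1$, which is the first claim. For the second claim, I note that $\beta c = b_2^{-1}(b_1 c)$ with $b_2, b_1c \in M$ and $b_2 \wedge_L (b_1 c) = 1$; by the uniqueness part of Theorem \ref{thm2_1}(2), this is exactly the left orthogonal form of $\beta c$.

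I do not expect a real obstacle here: the whole argument is a routine lattice-theoretic manipulation inside $N$, and the only subtlety is to make sure at each step that the elements produced by $\wedge_L$, $\vee_L$, and the quotients $z = b_1 y'$ actually lie in the parabolic submonoid $N$, which is precisely what Theorem \ref{thm2_5}(4)-(5) and the subsequent remark are designed for.
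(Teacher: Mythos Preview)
Your proof is correct and follows essentially the same route as the paper's: set $y = b_2 \wedge_L (b_1c)$, show $y \in N$, form $z = y \vee_L b_1 \in N$, write $z = b_1 y'$ with $y' \in N$ and $y' \le_L c$, conclude $y' = 1$ from $N$-reducedness of $c$, and finish with $y \le_L b_1 \wedge_L b_2 = 1$. You also spell out the ``therefore'' clause via uniqueness in Theorem~\ref{thm2_1}(2), which the paper leaves implicit.
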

\begin{proof}
Recall that $b_1, b_2 \in N$, since $H$ is a parabolic subgroup.
Set $y_1 = b_2 \wedge_L (b_1 c)$.
We have $y_1 \in N$ since $y_1 \le_L b_2$.
Let $y_2 = b_1 \vee_L y_1$.
We have $y_2 \in N$ since $y_1, b_1 \in N$.
Moreover, we have $y_2 \le_L b_1 c$ since $b_1 \le_L b_1 c$ and $y_1 \le_L b_1 c$. 
Let $y_3 \in M$ such that $y_2=b_1 y_3$.
We have $y_3 \in N$ (since $y_2 \in N$), $y_3 \le_L c$ and $c$ is $N$-reduced, hence $y_3 = 1$.
So, $y_2= b_1$, which implies that $y_1 \le_L b_1$.
We also have $y_1 \le_L b_2$ and $b_1 \wedge_L b_2 = 1$, hence $y_1=1$.
\end{proof}

\begin{lem}\label{lem3_9}
Let $\theta =c \Delta^{-p} \in T \setminus M$ in right $\Delta$-form with $p\geq 1$ and $\beta = b \in N$.
Then $bc$ is unmovable and $bc \Delta^{-p}$ is the right $\Delta$-form of $\beta \theta$.
\end{lem}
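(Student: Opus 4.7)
The plan is to verify unmovability of $bc$ and then invoke uniqueness of the right $\Delta$-form (Theorem \ref{thm2_2}). Since $\beta \theta = b \cdot c\Delta^{-p} = (bc)\Delta^{-p}$, as soon as we know that $bc$ is unmovable, the expression $bc\Delta^{-p}$ automatically \emph{is} the right $\Delta$-form of $\beta\theta$. So the entire content of the lemma reduces to showing that $\Delta \not\le_L bc$.

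First I would extract the hypotheses dictated by $\theta \in T \setminus M$ with $p \ge 1$: the element $c$ is unmovable, $N$-reduced, and moreover $\omega \not\le_L c$. I also note that $b \in N$ and, because $\omega \in \Div(\Delta)$, we have $\omega \le_L \Delta$.

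Now argue by contradiction: suppose $\Delta \le_L bc$. Then $\omega \le_L \Delta \le_L bc$. Since $b \le_L bc$ as well, taking left joins yields
\[
b \vee_L \omega \;\le_L\; bc.
\]
By Lemma \ref{lem3_5} applied to $b \in N$, $b \vee_L \omega = b\omega$. Therefore $b\omega \le_L bc$, and cancelling $b$ on the left gives $\omega \le_L c$. This contradicts the assumption $\omega \not\le_L c$ coming from $\theta \in T$ with $p \ge 1$. Hence $\Delta \not\le_L bc$, i.e.\ $bc$ is unmovable.

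Having established unmovability, the uniqueness statement in Theorem \ref{thm2_2} identifies $bc \Delta^{-p}$ as the right $\Delta$-form of $\beta \theta$, completing the proof. The only delicate point I anticipate is the first step where one has to pair the inequality $\omega \le_L bc$ with $b \le_L bc$ and apply the join formula of Lemma \ref{lem3_5}; everything else is bookkeeping.
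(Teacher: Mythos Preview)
Your proof is correct and follows essentially the same route as the paper: assume $\Delta \le_L bc$, deduce $\omega \le_L bc$, apply Lemma~\ref{lem3_5} to get $b \vee_L \omega = b\omega \le_L bc$, cancel $b$ to reach the contradiction $\omega \le_L c$. The paper is slightly terser (it does not explicitly invoke Theorem~\ref{thm2_2} for the final identification), but the argument is the same.
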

\begin{proof}
Notice that $c$ is unmovable and $N$-reduced, $\omega \not \le_L c$, and $p \ge 1$.
If we had $\Delta \le_L b c$, then we would have $\omega \le_L b c$, hence, by Lemma \ref{lem3_5}, we would have $b \vee_L \omega = b \omega \le_L b c$, and therefore we would have $\omega \le_L c$: contradiction.
So, $b c$ is unmovable. 
\end{proof}

\begin{lem}\label{lem3_10}
Let $\theta =c \Delta^{-p} \in T \setminus M$ in right $\Delta$-form with $p\geq 1$ and $\beta = b \delta^{-k} \in H \setminus N$ in right $\delta$-form with $k\geq 1$.
Then the right $\Delta$-form of $\beta \theta$ is $b\omega_1 \dots \omega_k\Phi^{-k}(c)\Delta^{-k-p}$, where $\omega_i = \Phi^{-i+1}$ for all $i \in \{1, \dots, k\}$.
\end{lem}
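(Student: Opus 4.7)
My approach is first to carry out the algebraic rewriting that produces the claimed decomposition, and then to verify that the putative unmovable factor is in fact unmovable. From $\omega = \delta^{-1}\Delta$ we get $\delta^{-1} = \omega\Delta^{-1}$; iterating via $\Delta^{-1}\alpha = \Phi^{-1}(\alpha)\Delta^{-1}$ yields $\delta^{-k} = \omega_1\omega_2\cdots\omega_k\Delta^{-k}$ with $\omega_i = \Phi^{-i+1}(\omega)$, as already noted in the remark following Lemma \ref{lem3_7}. Substituting,
\[
\beta\theta \;=\; b\delta^{-k} c \Delta^{-p} \;=\; b\omega_1\cdots\omega_k\Delta^{-k}c\Delta^{-p} \;=\; b\omega_1\cdots\omega_k\Phi^{-k}(c)\Delta^{-k-p}\,,
\]
where the last equality uses $\Delta^{-k}c = \Phi^{-k}(c)\Delta^{-k}$. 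All factors $b$, $\omega_i$, $\Phi^{-k}(c)$ lie in $M$, so $d := b\omega_1\cdots\omega_k\Phi^{-k}(c)$ lies in $M$ and the exponent $-k-p$ is negative; hence this is the right $\Delta$-form of $\beta\theta$ as soon as $d$ is shown to be unmovable.

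For this main step, set $c_k := \omega_1\omega_2\cdots\omega_k\Phi^{-k}(c)$, so $d = b c_k$. Since $\theta \in T \setminus M$ with $p \geq 1$, the element $c$ is $N$-reduced, and then Lemma \ref{lem3_7} tells us that $c_k$ is also $N$-reduced. Suppose, for a contradiction, that $\Delta \leq_L bc_k$. Since $\delta \leq_L \Delta$, this gives $\delta \leq_L bc_k$; together with $\delta \leq_L \delta$ this shows that $bc_k \wedge_L \delta = \delta$.

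The heart of the argument, and the one obstacle worth flagging, is to deduce from this that $\delta \leq_L b$, which will contradict the hypothesis that $b$ is $\delta$-unmovable in the parabolic substructure $(H,N,\delta)$ (equivalently, $\delta \not\leq_L b$ in $(G,M,\Delta)$, by Theorem \ref{thm2_5}). I plan to run the same short argument used in the proof of Lemma \ref{lem3_8}. Set $x = bc_k \wedge_L \delta$; since $x \leq_L \delta$, we have $x \in \Div(\delta) \subset N$. Let $y = b \vee_L x$, which lies in $N$ by Theorem \ref{thm2_5}(5) and satisfies $y \leq_L bc_k$. Writing $y = bz$ with $z \in M$, the Remark following Theorem \ref{thm2_5} gives $z \in N$, while $z \leq_L c_k$ combined with the $N$-reducedness of $c_k$ forces $z = 1$. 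Hence $y = b$ and $x \leq_L b$; but $x = \delta$, so $\delta \leq_L b$, contradicting the choice of $b$. Therefore $d$ is unmovable and $\beta\theta = d\,\Delta^{-k-p}$ is indeed its right $\Delta$-form.
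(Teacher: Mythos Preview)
Your proof is correct and follows the same approach as the paper's: first rewrite $\beta\theta$ using $\delta^{-1}=\omega\Delta^{-1}$, then show unmovability by invoking Lemma~\ref{lem3_7} to see that $c_k$ is $N$-reduced and deducing $\delta\le_L b$ from $\Delta\le_L bc_k$. The only cosmetic difference is that the paper phrases the key implication as $\tau_N(bc_k)=b$, while you unfold it explicitly via the argument of Lemma~\ref{lem3_8}; these are the same computation.
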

\begin{proof}
As $\theta\in T$, $c$ is unmovable and $N$-reduced, $\omega \not \le_L c$, and $p \ge 1$.
By using the equality $\delta^{-1} = \omega \Delta^{-1}$ it is easily seen that $\beta \theta = b \omega_1 \omega_2 \cdots \omega_k \, \Phi^{-k} (c) \Delta^{-p-k}$, where $\omega_i = \Phi^{-i+1} (\omega)$ for all $i \in \{1, \dots, k\}$.
By Lemma \ref{lem3_7} we have $\tau(b \omega_1 \cdots \omega_k \, \Phi^{-k} (c)) = b$.
Thus, if $\Delta \le_L b \omega_1 \cdots \omega_k \, \Phi^{-k} (c)$, then $\delta \le_L b \omega_1 \cdots \omega_k\, \Phi^{-k} (c)$, hence  $\delta \le_L b$: contradiction.
So, $\Delta \not \le_L b \omega_1 \cdots \omega_k \, \Phi^{-k} (c)$.
\end{proof}

\begin{proof}[Proof of Theorem \ref{thm3_4}]
We take $\theta \in T$ and $\beta \in H$ and we show that $\lg (\beta \theta) \ge \lg (\theta)$.
Suppose first that $\theta = c \in M$.
Let $\beta = b_2^{-1} b_1$ be the left orthogonal form of $\beta$.
Lemma \ref{lem3_8} imples that  $b_2^{-1} (b_1 c)$ is the left orthogonal form of $\beta \theta$.
By applying Theorem \ref{thm2_1} and Lemma \ref{lem3_6} we finally obtain $\lg (\beta \theta) = \lg (b_2) + \lg (b_1 c) \ge \lg (c) = \lg (\theta)$.

Assume that $\theta \not \in M$ and $\beta = b \in N$.
We write $\theta$ in the form $\theta = c \Delta^{-p}$, where $c$ is unmovable and $N$-reduced, $\omega \not \le_L c$, and $p \ge 1$.
By Lemma \ref{lem3_9} $bc$ is unmovable and $bc \Delta^{-p}$ is the right $\Delta$-form of $\beta \theta$. 
By Corollary \ref{corl2_4} it follows that $\lg (\beta \theta) = \max (\lg (bc), p) \ge \max(\lg (c), p) = \lg(\theta)$.

Assume that $\theta \not \in M$ and $\beta \not \in N$.
As before, we write $\theta$ in the form $\theta = c \Delta^{-p}$, where $c$ is unmovable and $N$-reduced, $\omega \not \le_L c$, and $p \ge 1$.
On the other hand we write $\beta$ in the form $\beta = b \delta^{-k}$, where $b \in N$, $\delta \not \le_L b$, and $k \ge 1$.
By Lemma \ref{lem3_10}, the right $\Delta$-form of $\beta \theta$ is  $b\omega_1 \cdots \omega_k\Phi^{-k}(c)\Delta^{-k-p}$, where $\omega_i = \Phi^{-i+1}$ for all $i \in \{1, \dots, k\}$.
By Corollary \ref{corl2_4} and Lemma \ref{lem3_6} it follows that $\lg (\beta \theta) = \max (\lg (b \omega_1 \cdots \omega_k \, \Phi^{-k} (c)), p+k) \ge \max (\lg (c), p) = \lg( \theta)$.
\end{proof}


\section{Regular language}\label{sec4}

A \emph{finite state automaton} is defined to be a quintuple $\FF = (X, \AA, \mu, Y, x_0)$, where $X$ is a finite set, called the \emph{set of states}, $\AA$ is a finite set, called the \emph{alphabet}, $\mu : X \times \AA \to X$ is a function, called the \emph{transition function}, $Y$ is a subset of $X$, called the \emph{set of accepted states}, and $x_0$ is an element of $X$, called the \emph{initial state}.
For $x \in X$ and $U = u_1 u_2 \cdots u_p \in \AA^*$ we define $\mu (x, U) \in X$ by induction on $p$ as follows. 
\[
\mu(x, U) =
\left\{ \begin{array}{ll} 
x &\text{if } p=0\,,\\
\mu (\mu (x,u_1 \cdots u_{p-1}), u_p) &\text{if } p \ge 1\,.
\end{array} \right.
\]
Then the set $L_{\FF} = \{ U \in \AA^* \mid \mu (x_0, U) \in Y\}$ is called the \emph{language recognized} by $\FF$.
A \emph{regular language} is a language recognized by a finite state automaton. 

Recall that $G$ is a given Garside group with Garside structure $(G, M, \Delta)$, that $(H, N, \delta)$ is a parabolic substructure, and that $T$ is the transversal of $H$ in $G$ defined in Section \ref{sec3}.
The following is used to understand the language defined by the automaton below.
Recall that $\sigma : \Div (\Delta) \to \Div (\Delta)$ is the function such that $\Delta = u \, \sigma(u)$ for all $u \in \Div (\Delta)$.

\begin{thm}[Dehornoy--Paris \cite{DehPar1}]\label{thm4_1}
Let $u_1, \dots, u_p, v_1, \dots, v_q \in \SS$.
Then $v_q^{-1} \cdots 
\allowbreak
v_2^{-1} v_1^{-1} u_1 u_2 \cdots u_p$ is a left greedy normal form if and only if $\sigma(u_i) \wedge_L u_{i+1} = 1$ for all $i \in \{1, \dots, p-1\}$, $\sigma (v_j) \wedge_L v_{j+1} = 1$ for all $j \in \{1, \dots, q-1\}$, and $u_1 \wedge_L v_1 = 1$.
\end{thm}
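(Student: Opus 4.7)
The plan is to split the biconditional into two independent subproblems, one about each of the positive factors and one about their interaction. Setting $a=u_1\cdots u_p$ and $b=v_1\cdots v_q$, the word $v_q^{-1}\cdots v_1^{-1}u_1\cdots u_p$ is a left greedy normal form of the element $\alpha=b^{-1}a$ it represents precisely when (i) $u_1\cdots u_p$ is the left greedy normal form of $a$, (ii) $v_1\cdots v_q$ is the left greedy normal form of $b$, and (iii) $a\wedge_L b=1$, i.e.\ $\alpha=b^{-1}a$ is the left orthogonal form of $\alpha$. I would prove that (i) and (ii) are equivalent to the pairwise conditions on the $u_i$'s and $v_j$'s respectively, and that (iii), granted (i) and (ii), is equivalent to the single condition $u_1\wedge_L v_1=1$.

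For the positive-word part, the key tool is the adjoint identity $ux\le_L\Delta\iff x\le_L\sigma(u)$, which follows directly from the definition $\Delta=u\,\sigma(u)$. Applied with $u=u_i$, it yields the reformulation
\[
(u_iu_{i+1}\cdots u_p)\wedge_L\Delta=u_i\iff \sigma(u_i)\wedge_L(u_{i+1}\cdots u_p)=1.
\]
The direction greedy $\Rightarrow$ pairwise is then immediate, since $u_{i+1}\le_L u_{i+1}\cdots u_p$ forces $\sigma(u_i)\wedge_L u_{i+1}\le_L\sigma(u_i)\wedge_L(u_{i+1}\cdots u_p)=1$. The converse goes by downward induction on $i$: assuming $(u_{i+1}\cdots u_p)\wedge_L\Delta=u_{i+1}$ is already established, any common left divisor of $\sigma(u_i)$ and $u_{i+1}\cdots u_p$ is simple, hence bounded above by $u_{i+1}$, and the hypothesis $\sigma(u_i)\wedge_L u_{i+1}=1$ then kills it. The same argument handles the $v_j$'s verbatim.

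For the orthogonality step, the direction $a\wedge_L b=1\Rightarrow u_1\wedge_L v_1=1$ is immediate since $u_1\wedge_L v_1\le_L a\wedge_L b$. Conversely, let $x=a\wedge_L b$. Intersecting with $\Delta$ and using the already-proved greedy property $a\wedge_L\Delta=u_1$ and $b\wedge_L\Delta=v_1$ gives $x\wedge_L\Delta\le_L u_1\wedge_L v_1=1$, so $x\wedge_L\Delta=1$. Since $\Div(\Delta)$ generates $M$, every non-trivial element of $M$ admits a non-trivial simple left divisor, and therefore $x=1$.

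The main obstacle is not conceptual but bookkeeping: making the downward induction on $i$ precise, and correctly handling the degenerate cases $p=0$ or $q=0$, where $\alpha$ lies in $M$ or in $M^{-1}$ and the corresponding list of pairwise conditions, as well as condition (iii) when both $p$ and $q$ vanish, becomes vacuous. Once those are sorted out, no further Garside input is required beyond the adjoint identity $ux\le_L\Delta\iff x\le_L\sigma(u)$ and the fact that $\Div(\Delta)$ generates $M$, both of which are recorded in the preliminaries.
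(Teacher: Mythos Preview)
The paper does not prove Theorem~\ref{thm4_1}; it is stated with attribution to Dehornoy--Paris \cite{DehPar1} and used as a black box. There is therefore nothing in the paper to compare your argument against.

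Your proof is correct and self-contained. The decomposition into conditions (i), (ii), (iii) is exactly the definition of left greedy normal form for elements of $G$ given in Section~\ref{sec2}. The adjoint identity $ux\le_L\Delta\iff x\le_L\sigma(u)$ is immediate from $\Delta=u\,\sigma(u)$ and left cancellativity, and together with the downward induction it handles the positive-word characterisations (i) and (ii) cleanly; the base case $i=p$ is automatic since $u_p\in\Div(\Delta)$. For the orthogonality step (iii), the implication $x\wedge_L\Delta=1\Rightarrow x=1$ for $x\in M$ follows, as you say, from the fact that $\Div(\Delta)$ generates $M$: any nontrivial $x$ has a nontrivial factorisation over $\SS$, whose first letter is a simple left divisor of $x$. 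The degenerate cases $p=0$ or $q=0$ cause no trouble once one remembers that the empty product is $1$ and that the condition $u_1\wedge_L v_1=1$ is vacuous when either list is empty. So your argument supplies what the paper omits, with no gaps.
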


We define a finite state automaton $\FF = (X, \AA, \mu, Y, x_0)$ as follows.
We set $\AA = \SS \cup \SS^{-1}$, $X = \AA \cup \{x_0, x_1\}$, $Y = \AA \cup \{x_0\}$, and we define $\mu : X \times \AA \to X$ as follows. 
Let $u,v \in \SS$.
\begin{gather*}
\mu(x_0, v) = \left\{ \begin{array}{ll}
v &\text{if } v \wedge_L \delta = 1\,,\\
x_1 &\text{otherwise}\,,
\end{array}\right.\
\mu (x_0, v^{-1}) =\left\{ \begin{array}{ll}
v^{-1} &\text{if } v = \Delta\,,\\
v^{-1} &\text{if } \sigma(v) \wedge_L \delta = 1\\
&\ \text{ and } \omega \not \le_L \sigma(v)\,,\\
x_1 &\text{otherwise}\,,
\end{array}\right.\\
\mu(u, v) = \left\{ \begin{array}{ll}
v &\text{if } \sigma(u) \wedge_L v = 1\,,\\
x_1 &\text{otherwise}\,,
\end{array} \right.\
\mu (u, v^{-1}) = x_1\,,\\
\mu (u^{-1}, v) = \left\{ \begin{array}{ll}
v &\text{if } u \wedge_L v = 1\,,\\
x_1 &\text{otherwise}\,,
\end{array} \right.\
\mu (u^{-1}, v^{-1}) = \left\{ \begin{array}{ll}
v^{-1} &\text{if } \sigma(v) \wedge_L u = 1\,,\\
x_1 &\text{otherwise}\,,
\end{array} \right.\\
\mu(x_1 , v) = \mu (x_1 , v^{-1}) = x_1\,.
\end{gather*}

Recall that $\ev : \AA^* \to G$ is the map that sends each word $U$ to the element of $G$ that it represents. 
The purpose of the present section is to prove the following. 

\begin{thm}\label{thm4_2}
We have $\ev (L_\FF) = T$, the restriction $\ev : L_\FF \to T$ is a bijective  correspondence, and $\lg (U) = \lg (\ev (U))$ for all $U \in L_\FF$.
\end{thm}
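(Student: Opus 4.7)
The plan is to show that $L_\FF$ is precisely the set of words over $\SS \cup \SS^{-1}$ that are left greedy normal forms of elements of $T$. Given this identification, uniqueness of the left greedy normal form (Theorem~\ref{thm2_1}) gives the injectivity of $\ev \colon L_\FF \to T$; Theorem~\ref{thm2_1}(2) applied to such a normal form yields $\lg(U) = p + q = \lg(\ev(U))$; and surjectivity follows by assembling the left greedy normal form of any $\theta \in T$ and checking that it is accepted by $\FF$.

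First I would inspect the state graph. The rule $\mu(u, v^{-1}) = x_1$ forbids returning to a negative state once a positive state has been reached, so any $U \in L_\FF$ decomposes as $U = v_q^{-1} \cdots v_1^{-1} u_1 \cdots u_p$ with $v_i, u_i \in \SS$. The ``inner'' transitions $\mu(u^{-1}, v^{-1})$, $\mu(u^{-1}, v)$ and $\mu(u, v)$ impose respectively the three conditions $\sigma(v_j) \wedge_L v_{j+1} = 1$, $v_1 \wedge_L u_1 = 1$ and $\sigma(u_i) \wedge_L u_{i+1} = 1$ of Theorem~\ref{thm4_1}, so $U$ is a left greedy normal form of $\alpha := \ev(U)$, and Theorem~\ref{thm2_1}(2) gives $\lg(U) = p + q = \lg(\alpha)$ in one stroke.

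Second, I would show that the transitions out of $x_0$ characterise exactly the condition $\alpha \in T$. For $q = 0$, the condition $u_1 \wedge_L \delta = 1$ is equivalent, via $u_1 = \alpha \wedge_L \Delta$ and $\delta \le_L \Delta$, to $\alpha$ being $N$-reduced; together with the trivial right $\Delta$-form $\alpha = \alpha \cdot \Delta^0$ this gives $\alpha \in T$. For $q \ge 1$, Proposition~\ref{prop2_3}(2)--(3) yields the right $\Delta$-form
\[
\alpha = c'' \Delta^{-q}, \qquad c'' = \sigma(v_q)\, \Phi^{-1}(\sigma(v_{q-1})) \cdots,
\]
still in left greedy normal form (since $\Phi$ preserves greediness), so whenever $v_q \ne \Delta$ we have $c'' \wedge_L \Delta = \sigma(v_q)$. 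Using $\delta, \omega \le_L \Delta$, the two conditions ``$c''$ is $N$-reduced'' and ``$\omega \not\le_L c''$'' that define membership in $T$ translate respectively to ``$\sigma(v_q) \wedge_L \delta = 1$'' and ``$\omega \not\le_L \sigma(v_q)$'' --- exactly the automaton's transitions from $x_0$. The degenerate case $v_q = \Delta$ is forced by greediness to have all $v_i = \Delta$ and $p = 0$, giving $\alpha = \Delta^{-q}$, which lies in $T$ automatically and is handled by the separate clause in the automaton.

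Surjectivity will then follow by reversing these equivalences: given $\theta \in T$, let $U$ be its left greedy normal form; by the previous paragraph the initial letter satisfies the $x_0$-transition conditions, so $U \in L_\FF$. The main technical obstacle will be the explicit identification in Proposition~\ref{prop2_3} of $\sigma(v_q)$ as the leading simple factor of $c''$ together with the equivalence $\omega \le_L c'' \iff \omega \le_L \sigma(v_q)$ (which rests on $\omega \le_L \Delta$ and on $c''$ being in left greedy normal form with leading factor $\sigma(v_q)$); once this is clear, the automaton's initial transitions are seen to test exactly the leading factor's compatibility with $\delta$ and $\omega$, and the remainder is essentially bookkeeping.
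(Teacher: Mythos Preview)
Your proposal is correct and follows essentially the same route as the paper's proof: identify accepted words with left greedy normal forms via Theorem~\ref{thm4_1}, then use Proposition~\ref{prop2_3} to compute the $\Delta$-form and translate the automaton's initial-letter conditions into the defining conditions for $T$, with the same case analysis on $(p,q)$ and the same treatment of the degenerate case $v_q=\Delta$. The only omission is that Proposition~\ref{prop2_3} yields the \emph{left} $\Delta$-form $\Delta^{-q}c$, so one needs Theorem~\ref{thm2_2} (applying $\Phi^{-q}$) to pass to the right $\Delta$-form $c''\Delta^{-q}$ before reading off the leading factor $\sigma(v_q)$; the paper invokes both results explicitly, but this is a trivial addition to your argument.
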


Recall that for each $n \in \N$ we denote by $e(n) = e_{G,H,\SS}(n)$ the number of right cosets of $H$ in $G$ of length $n$.
Recall also that the \emph{coset growth series} of $H$ in $G$ is $\Gr_{G,H,\SS} (t) = \sum_{n=0}^\infty e(n)t^n$.
By Theorem \ref{thm3_3}, Theorem \ref{thm3_4} and Theorem \ref{thm4_2} $e(n)$ is equal to the number of words of length $n$ in the regular language $L_\FF$.
In other words, $\Gr_{G,H,\SS}(t)$ is the growth series of $L_\FF$.
Since we know that the growth series of a regular language is rational (see Flajolet--Sedgewick \cite{FlaSed1}, for example), it follows that:

\begin{corl}\label{corl4_3}
The formal series $\Gr_{G,H,\SS}(t)$ is rational.
\end{corl}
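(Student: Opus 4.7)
The corollary is essentially a bookkeeping consequence of the three ingredients already assembled: the transversal $T$ of Section \ref{sec3}, the length-preservation property of $T$, and the regular language $L_\FF$ of Section \ref{sec4}. The plan is to chain these together to identify $e(n)$ with the number of length-$n$ words in $L_\FF$, and then invoke the standard fact that growth series of regular languages are rational.

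First I would fix $n \in \N$ and reinterpret $e(n)$ combinatorially. By Theorem \ref{thm3_3}, the map $\theta \mapsto H\theta$ is a bijection between $T$ and the set of right-cosets of $H$ in $G$. By Theorem \ref{thm3_4}, $\lg(\theta) = \lg(H\theta)$ for every $\theta \in T$, so this bijection restricts, for each fixed $n$, to a bijection between $\{\theta \in T \mid \lg(\theta) = n\}$ and $\{C \in H \backslash G \mid \lg(C) = n\}$. Hence $e(n) = \#\{\theta \in T \mid \lg(\theta) = n\}$.

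Next I would use Theorem \ref{thm4_2} to convert this count into a count of words in the regular language $L_\FF$. Since $\ev : L_\FF \to T$ is a bijection satisfying $\lg(U) = \lg(\ev(U))$ for all $U \in L_\FF$, we obtain a bijection $L_\FF \cap \AA^n \to \{\theta \in T \mid \lg(\theta) = n\}$. Combining with the previous paragraph,
\[
e(n) \;=\; \#(L_\FF \cap \AA^n)\,,
\]
so $\Gr_{G,H,\SS}(t) = \sum_{n=0}^\infty e(n) t^n$ is precisely the growth series (i.e. the generating function by word length) of $L_\FF$.

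Finally, I would cite the classical theorem (for instance Flajolet--Sedgewick \cite{FlaSed1}, as the authors suggest) asserting that the length generating function of a regular language is a rational function of $t$. Applied to $L_\FF$, which is regular by construction as the language recognized by the finite state automaton $\FF$, this yields rationality of $\Gr_{G,H,\SS}(t)$. There is no real obstacle here; the corollary is a direct assembly of Theorems \ref{thm3_3}, \ref{thm3_4}, \ref{thm4_2} with a single invocation of the rationality theorem for regular languages, and all substantive content has been carried out in the preceding sections.
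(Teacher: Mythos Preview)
Your proposal is correct and follows exactly the same approach as the paper: chain Theorems \ref{thm3_3}, \ref{thm3_4}, and \ref{thm4_2} to identify $e(n)$ with the number of length-$n$ words in $L_\FF$, then invoke the rationality of growth series of regular languages (citing Flajolet--Sedgewick). The paper in fact presents this argument in the short paragraph immediately preceding the corollary rather than as a separate proof, but the content is identical.
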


\begin{proof}[Proof of Theorem \ref{thm4_2}]
Set $T' = \ev (L_\FF)$.
By Theorem \ref{thm4_1} the restriction $\ev : L_\FF \to T'$ is a bijective correspondence, and $T'$ is the set of elements $\theta = v_q^{-1} \cdots v_2^{-1} v_1^{-1} u_1 u_2
\allowbreak
\cdots u_p$ written in left greedy normal form such that:
\begin{itemize}
\item
if $q = 0$ and $p \ge 1$, then $u_1 \wedge_L \delta = 1$,
\item
if $q \ge 1$, then either $v_q = \Delta$ or ($\sigma(v_q) \wedge_L \delta = 1$ and $\omega \not \le_L \sigma(v_q)$).
\end{itemize}
Moreover, by Theorem \ref{thm2_1}, we have $\lg (U) = \lg (\ev (U))$ for all $U \in L_\FF$.
So, it remains to show that $T' = T$.

Let $\theta = v_q^{-1} \cdots v_2^{-1} v_1^{-1} u_1 u_2 \cdots u_p$ be an element of $G$ written in left greedy normal form.
Suppose first that $q = 0$.
Then $\theta = u_1 u_2 \cdots u_p$.
If $p = 0$, then $\theta = 1 \in T \cap T'$.
So, we can assume that $p \ge 1$.
Since $\delta \in \Div(\Delta)$, we have $\theta \wedge_L \delta = (\theta \wedge_L \Delta) \wedge_L \delta = u_1 \wedge_L \delta$, hence 
\[
\theta \in T \ \Leftrightarrow\ \theta \wedge_L \delta = 1 \ \Leftrightarrow\ u_1 \wedge_L \delta = 1 \ \Leftrightarrow\ \theta \in T'\,.
\]

Suppose that $p = 0$ and $q \ge 1$.
Then $\theta = v_q^{-1} \cdots v_2^{-1} v_1^{-1}$.
Let $r \in \{0,1, \dots , q\}$ such that $v_i = \Delta$ for all $i \in \{1, \dots, r\}$ and $v_{r+1} \neq \Delta$.
Set $w_j = \Phi^{-j+1} (\sigma (v_{q-j+1}))$ for $j \in \{1, \dots, q-r\}$ and $c=w_1 \cdots w_{q-r}$.
Then, by Proposition \ref{prop2_3} and Theorem \ref{thm2_2}, $\theta = c \Delta^{-q}$, $c$ is unmovable, and $c = w_1 \cdots w_{q-r}$ is the left greedy normal form of $c$.
Notice that $w_1 = \sigma (v_q)$.
If $v_q = \Delta$, then $c = 1$ and $\theta = \Delta^{-q} \in T \cap T'$.
So, we can assume that $v_q \neq \Delta$, that is, $q > r$ and $c \neq 1$.
As above, since $\delta \in \Div (\Delta)$, we have $c \wedge_L \delta = w_1 \wedge_L \delta  = \sigma (v_q) \wedge_L \delta$.
On the other hand, since $\omega \in \Div (\Delta)$, we have $\omega \le_L c$ if and only if $\omega \le_L w_1 = \sigma (v_q)$.
So, 
\[
\theta \in T \ \Leftrightarrow\ (c \wedge_L \delta = 1 \text{ and } \omega \not \le_L c) \ \Leftrightarrow\ (\sigma (v_q) \wedge_L \delta = 1 \text{ and } \omega \not \le_L \sigma (v_q)) \ \Leftrightarrow\ \theta \in T'\,.
\]

Suppose that $p \ge 1$ and $q \ge 1$.
Then $\theta = v_q^{-1}  \cdots v_2^{-1} v_1^{-1} u_1 u_2 \cdots u_p$.
Since $u_1 u_2 \cdots u_p \wedge_L v_1 v_2 \cdots v_q = 1$, we have $u_i \neq \Delta$ for all $i \in \{1, \dots, p\}$ and $v_j \neq \Delta$ for all $j \in \{1, \dots, q\}$.
Set $w_j = \Phi^{-j+1} (\sigma (v_{q-j+1}))$ for all $j \in \{1, \dots, q\}$ and $w_j = \Phi^{-q} (u_{j-q})$ for all $j \in \{q+1, \dots, q+p\}$.
Let $c = w_1 \cdots w_{q+p}$.
Then, by Proposition \ref{prop2_3} and Theorem \ref{thm2_2}, $\theta = c \Delta^{-q}$, $c$ is unmovable, and $c = w_1 \cdots w_{q+p}$ is the left greedy normal form of $c$.
By applying the same reasoning as in the previous case, we then obtain 
\[
\theta \in T \ \Leftrightarrow\ (c \wedge_L \delta = 1 \text{ and } \omega \not \le_L c) \ \Leftrightarrow\ (\sigma(v_q) \wedge_L \delta = 1 \text{ and } \omega \not \le_L \sigma(v_q)) \ \Leftrightarrow\ \theta \in T'\,.
\]
\end{proof}


\section{Projections}\label{sec5}

Let $G$ be a group generated by a finite set $\SS$.
We set $\AA = \SS \cup \SS^{-1}$ and, as before, we denote by $\ev : \AA^* \to G$ the map which sends each word $U \in \AA^*$ to the element of $G$ that it represents. 
We denote by $\lg = \lg_\SS$ the word length in $G$ with respect to $\SS$ and by $d : G \times G \to \N$ the distance function induced by $\lg$.
Recall that $d(\alpha, \beta) = \lg (\alpha^{-1} \beta)$ for all $\alpha, \beta \in G$. 
Recall also that, for $\alpha \in G$ and $X \subset G$, the \emph{distance} from $\alpha$ to $X$ is $d (\alpha, X) = \min \{d (\alpha, \beta) \mid \beta \in X \}$. 
The \emph{diameter} of a subset $X \subset G$ is $\diam (X) = \max \{ d(\alpha, \beta) \mid \alpha, \beta \in X \}$.

A word $U \in \AA^*$ is called a \emph{geodesic} (or a \emph{reduced word}) if $\lg (U) = \lg (\ev (U))$.
For a word $U = x_1 x_2 \cdots x_\ell$ of length $\ell$ and $i \in \N$ we set $U(i) = x_1 \cdots x_i$ if $i \le \ell$ and $U(i) = U$ if $i >\ell$.
Let $K$ be a positive constant.
We say that two words $U,V \in \AA^*$ \emph{$K$-fellow travel} if $d(\ev(U(i)), \ev(V(i))) \le K$ for all $i \in \N$.
We say that $(G,\SS)$ has the \emph{falsification by $K$-fellow traveller property} if for each non-geodesic word $U \in \AA^*$ there exists a strictly shorter word $V \in \AA^*$ such that $\ev (U) = \ev (V)$ and $U,V$ $K$-fellow travel.

Let $H$ be a subgroup of $G$.
The \emph{projection} of an element $\alpha \in G$ in $H$ is defined to be $\pi_H (\alpha) = \{ \beta \in H \mid d(\alpha, \beta) = d(\alpha, H) \}$.
Let $K$ be a positive constant. 
We say that $(G, \SS)$ has \emph{$K$-fellow projections} on $H$ if, for each $\alpha_1, \alpha_2 \in G$ such that $d(\alpha_1, \alpha_2) = 1$ and each $\beta_1 \in \pi_H (\alpha_1)$, there exists $\beta_2 \in \pi_H (\alpha_2)$ such that $d(\beta_1, \beta_2) \le K$.
We say that $(G,\SS)$ has \emph{$K$-bounded projections} on $H$ if, for each $\alpha_1, \alpha_2 \in G$ such that $d(\alpha_1, \alpha_2) = 1$, we have $\diam (\pi_H(\alpha_1) \cup \pi_H (\alpha_2)) \le K$. 
Note that, if $G$ has $K$-bounded projections on $H$, then $G$ has $K$-fellow projections on $H$.

As pointed out in the introduction, the starting point of our study was the following two theorems. 

\begin{thm}[Antol\'in \cite{Antol1}]\label{thm5_1}
Let $G$ be a group generated by a finite set $\SS$, let $\AA = \SS \cup \SS^{-1}$, and let $H$ be a subgroup of $G$.
Assume that $(G,\SS)$ has the falsification by fellow traveller property and has fellow projections on $H$.
Then:
\begin{itemize}
\item[(1)]
The language $\Geo (H \backslash G, \SS) = \{ U \in \AA^* \mid \lg (U) = \lg(H\, \ev(U)) \}$ is regular.
\item[(2)]
If, furthermore, $(G,\SS)$ has bounded projections on $H$, then the coset growth series $\Gr_{G, H, \SS}(t)$ of $H$ in $G$ is rational.
\end{itemize}
\end{thm}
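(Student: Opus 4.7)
My plan is to mimic the Neumann--Shapiro proof that FFTP alone yields regularity of the geodesic language $\Geo(G,\SS)$, enhanced by fellow projections to handle the coset setting. For (1), the key observation is that a word $U$ of length $n$ lies in $\Geo(H\backslash G, \SS)$ if and only if $d(\ev(U(i)), H) = i$ at every step $i \le n$, since the quantity $i \mapsto d(\ev(U(i)), H)$ is $1$-Lipschitz. It therefore suffices to build a finite state automaton that detects failures of this unit increment. I propose a state space encoding, along $U$, a bounded-length suffix of a greedy geodesic for $\gamma_i := \beta_i^{-1} \ev(U(i))$, where $\beta_i \in \pi_H(\ev(U(i)))$ is chosen inductively with $d(\beta_i, \beta_{i+1}) \le K$ by the fellow projections hypothesis. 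The transition law $\gamma_{i+1} = (\beta_{i+1}^{-1}\beta_i)\, \gamma_i u_{i+1}$ is a left-multiplication by a bounded-length element followed by appending the letter $u_{i+1}$, so it can be computed from bounded suffix data, and FFTP tells us that whether this operation increases the length by exactly $1$ is visible in a bounded neighborhood of the suffix.

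For (2), I would combine (1) with bounded projections to pass to a regular sublanguage of $\Geo(H\backslash G,\SS)$ containing exactly one representative per right coset; its growth series is then both the coset growth series and, by the Chomsky--Sch\"utzenberger theorem, rational. Bounded projections imply that any two coset-geodesic words for the same coset end at points lying in a uniformly bounded Cayley neighborhood of each other. Fixing a ShortLex ordering on $\AA^*$, the condition ``$U$ is the ShortLex-minimum word representing its coset within $\Geo(H\backslash G,\SS)$'' becomes the complement, inside $\Geo(H\backslash G,\SS)$, of a regular ``there is a ShortLex-smaller coset-equivalent word within a uniformly bounded window'' condition; the latter is encodable by a product automaton tracking pairs of bounded-distance paths.

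The hard part will be making the FFTP step in (1) rigorous: classical FFTP applies to a fixed target group element, whereas the target here varies over the infinite set $H\ev(U)$. The natural bridge is a coset-FFTP lemma: every non-coset-geodesic $U$ admits a strictly shorter $V \in \AA^*$ with $\ev(V) \in H\ev(U)$ that $K'$-fellow-travels $U$, for a constant $K'$ depending only on $K$ and the FFTP constant. I would attempt to build such a $V$ by taking the projection path $\beta_0 = 1, \beta_1, \dots, \beta_n$, concatenating a geodesic for $\beta_n$ with a geodesic for $\beta_n^{-1}\ev(U)$ to form a word representing $\ev(U)$ with an explicit detour through $H$, and then applying FFTP to this detour word; the projection hypothesis should guarantee that the resulting shortening remains in a bounded tube around $U$. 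Controlling the accumulation of fellow-traveling error across the stages of this construction is the delicate point and is what forces the use of fellow projections rather than a weaker hypothesis.
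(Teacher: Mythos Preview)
The paper does not contain a proof of this statement: Theorem~\ref{thm5_1} is quoted from \cite{Antol1} and is used here only as background motivation, so there is no in-paper argument to compare your proposal against.

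That said, your sketch is broadly in line with how such results are established. The reduction of (1) to a coset-FFTP lemma---every word $U$ that is not a coset geodesic admits a strictly shorter $V$ with $\ev(V)\in H\,\ev(U)$ that fellow-travels $U$ with a uniform constant---is exactly the right bridge, and fellow projections are precisely what allow one to build the required $V$ by routing through a nearby point of $H$ and then invoking ordinary FFTP. Once the coset-FFTP lemma is in hand, the Neumann--Shapiro automaton construction goes through verbatim with $H$-cosets in place of group elements, since the automaton only needs to detect, from a bounded window, whether the current prefix is a coset geodesic. Your description of the state space via the pair $(\beta_i,\gamma_i)$ is more elaborate than necessary: once coset-FFTP holds, one can run the standard ``difference set'' automaton directly on coset distances without tracking a specific projection path. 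For (2), the ShortLex-minimum-representative argument you outline is the standard one; bounded projections ensure that any two coset geodesics for the same coset synchronously fellow-travel (not merely end nearby), which is what makes the product-automaton comparison work. The one place your write-up is genuinely incomplete is the ``accumulation of error'' step you flag yourself: you need to argue that a \emph{single} application of FFTP to a carefully chosen word suffices, rather than an iterated shortening whose fellow-travel constants could blow up.
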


\begin{thm}[Holt \cite{Holt1}]\label{thm5_2}
Let $G$ be a Garside group with Garside structure $(G, M, \Delta)$, and let $\SS = \Div (\Delta) \setminus \{1\}$. 
Then $G$ endowed with the generating set $\SS$ has the falsification by fellow traveller property.
\end{thm}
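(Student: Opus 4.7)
The strategy I would follow is to establish the \emph{local non-geodesic property}: there exists a constant $K$, depending only on $(G,\SS)$, such that every non-geodesic word $U \in \AA^*$ contains a subword $x_{i+1} x_{i+2} \cdots x_{j}$ of length $j-i \le K$ that is itself non-geodesic, i.e.\ whose evaluation has $\SS$-length strictly less than $j-i$. Once this is proven, the falsification by $K$-fellow traveller property follows immediately: given non-geodesic $U$, replace the short non-geodesic window by any strictly shorter word $W$ representing the same element, and let $V$ be the resulting word. Then $V$ is strictly shorter than $U$ with $\ev(V)=\ev(U)$, and since $U$ and $V$ agree outside an interval of length $\le K$, they synchronously fellow travel off that interval; inside the interval, both prefix evaluations lie within distance $K$ of the endpoint of the common part, so $U$ and $K$-fellow travel with $V$.

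To prove the local non-geodesic property, I would track, for each prefix $\alpha_i := \ev(U(i))$, its right $\Delta$-form $\alpha_i = a_i \Delta^{k_i}$ (Theorem \ref{thm2_2}), together with the left greedy normal form of the unmovable element $a_i$. Right-multiplication by a letter $x \in \SS \cup \SS^{-1}$ changes $(a_i,k_i)$ in a controlled way: $k_i$ varies by at most one, and the normal form of $a_i$ changes by a local "simple element shuffle" whose range is bounded in terms of $|\Div(\Delta)|$. Combined with the length formula $\lg(\alpha_i) = \max(\lg(a_i)+k_i,\,-k_i,\,\lg(a_i))$ of Corollary \ref{corl2_4}, this yields a finite-state description of how the pair $(\lg(a_i), k_i)$, and hence $\ell_i := \lg(\alpha_i)$, evolves with $i$.

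Since $\ell_i - \ell_{i-1} \in \{-1,0,+1\}$ and $\ell_n = \lg(\ev(U)) < n$, the sequence $(\ell_i)$ must fail to be strictly increasing at some index. I would then run a pigeonhole/finite-state argument on the evolving data $(a_i, k_i, \text{leading and trailing simple factors of } a_i)$ to locate a bounded window $[i+1,j]$ on which the combinatorial data at the endpoints match in such a way that the subword can be replaced by a strictly shorter one; the Garside lattice operations $\wedge_L, \vee_L$ provide the replacement explicitly in the form of a normal-form computation.

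The main obstacle is step three: producing a uniform bound on the window length $j-i$. The difficulty is that a non-geodesic stretch inside $U$ may look locally geodesic for arbitrarily many steps if one only monitors $\ell_i$, so one must monitor a richer finite invariant of $\alpha_i$ (essentially a finite prefix of the right normal form together with the $\Delta$-exponent) and invoke a pigeonhole on this invariant to force a recurrence within a window of size bounded by a polynomial in $|\Div(\Delta)|$. Establishing that the recurrence not only detects a loop but actually witnesses strict non-geodesity of the subword between the two matching positions is the technical heart of the argument, and is where the lattice properties \textbf{(c)} of the Garside structure and the fundamental Theorem \ref{thm4_1} characterizing left greedy normal forms are crucially used.
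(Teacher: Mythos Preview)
This theorem is not proved in the paper; it is quoted from Holt~\cite{Holt1} as an external result. So there is no in-paper argument to compare your proposal against.

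As for the proposal itself: the first reduction (a uniform bound on the length of some non-geodesic subword implies FFTP with that same constant) is correct and standard. The gap is in the pigeonhole step. The data you propose to monitor includes the $\Delta$-exponent $k_i$, which is an unbounded integer, so the state space is infinite and pigeonhole does not apply as written. If instead you keep only a bounded amount of normal-form data (a finite prefix and suffix of the greedy form of $a_i$, say), then a recurrence of that data at positions $i<j$ does \emph{not} force $x_{i+1}\cdots x_j$ to be non-geodesic: along a genuine left-greedy normal form $u_1 u_2\cdots u_n$ the bounded invariant you record can repeat freely while every subword remains geodesic. You yourself flag this as ``the technical heart'' without supplying any mechanism, and I do not see how recurrence of a bounded local invariant can be promoted to strict non-geodesity of the enclosed subword; this is where the sketch breaks down.

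For comparison, Holt's argument in~\cite{Holt1} does not go through pigeonhole at all. He analyses directly how right-multiplication by a single letter from $\SS\cup\SS^{-1}$ modifies the greedy normal form of the running prefix, and shows that whenever the length fails to increase one can rewrite a short subword to a strictly shorter one, giving FFTP with an explicit small constant. The ingredients are the ones you name (the $\Delta$-form, Corollary~\ref{corl2_4}, and the local characterisation of greedy forms in Theorem~\ref{thm4_1}), but they are used for a direct length computation, not a finiteness or recurrence argument.
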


Let $G$ be a Garside group with Garside structure $(G, M, \Delta)$ and let $(H, N, \delta)$ be a parabolic substructure.
So, it is natural to ask whether $(G,\SS)$ has bounded projections on $H$ and, if not, whether it has fellow projections.
The answer is ``NO'' for the first question (see Corollary \ref{corl5_4}), and is ``YES'' for the second question (see Theorem  \ref{thm5_5}).
As pointed out in the introduction, this is the first example we know of a triple $(G,\SS,H)$, where $G$ is a group, $\SS$ a finite generating set and $H$ is a subgroup of $G$ such that $(G,\SS)$ has the falsification by fellow traveller property, it has fellow projections on $H$, but it does not have bounded projections on $H$.

>From now on $G$ denotes a given Garside group with Garside structure $(G, M, \Delta)$ and $(H, N, \delta)$ denotes a given parabolic substructure. 
Recall that $\omega = \delta^{-1} \Delta$, and $\Phi : G \to G$, $\alpha \mapsto \Delta \alpha \Delta^{-1}$, denotes the conjugation by $\Delta$.
The fact that $G$ does not have bounded projections on $H$ (if $H \neq G$) is a direct consequence of the following. 

\begin{lem}\label{lem5_3}
Suppose that $ H \neq G$.
Let $k \ge 1$ and let $d_k = \omega_1 \omega_2 \cdots \omega_k$, where $\omega_i = \Phi^{-i+1} (\omega)$ for all $i \in \{1, \dots, k\}$. 
Then $\diam (\pi_H(d_k)) \ge k$.
\end{lem}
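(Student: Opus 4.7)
The strategy is to exhibit two explicit points of $\pi_H(d_k)$ that are at distance $k$ apart, namely $1$ and $\delta^{-k}$, both of which lie in $H$.

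First I would check that $d(d_k, H) = k$. Since $d_k \in M$, its right $\Delta$-form is $d_k \cdot \Delta^0$, and Lemma \ref{lem3_7} applied with $c = 1$ shows that $d_k$ is $N$-reduced, so $d_k \in T$. Theorem \ref{thm3_4} then gives $d(d_k, H) = \lg(H d_k) = \lg(d_k)$. To compute $\lg(d_k)$ I would verify that $d_k = \omega_1 \omega_2 \cdots \omega_k$ is already a left greedy normal form. Since $H \ne G$, one has $\omega = \delta^{-1}\Delta \ne 1$, so each $\omega_i$ lies in $\SS$. Using the identity $\sigma(\omega) = \omega^{-1}\Delta = \Phi^{-1}(\delta)$ together with the $\Phi$-equivariance of $\sigma$ and of $\wedge_L$, the criterion $\sigma(\omega_i) \wedge_L \omega_{i+1} = 1$ of Theorem \ref{thm4_1} reduces to $\delta \wedge_L \omega = 1$, which is furnished by Lemma \ref{lem3_5}. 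Theorem \ref{thm2_1} then yields $\lg(d_k) = k$.

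Next, since $d(d_k, 1) = \lg(d_k) = k$, we have $1 \in \pi_H(d_k)$. For the other candidate point, a direct computation gives
\[
d(d_k, \delta^{-k}) = \lg\bigl(d_k^{-1}\delta^{-k}\bigr) = \lg\bigl(\Delta^{-k}\delta^k \cdot \delta^{-k}\bigr) = \lg(\Delta^{-k}) = k,
\]
so $\delta^{-k} \in \pi_H(d_k)$ as well.

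Finally, I would compute $d(1, \delta^{-k}) = \lg(\delta^{-k})$ and show it equals $k$. The equality $\delta^{-k} = d_k\,\Delta^{-k}$ is the right $\Delta$-form of $\delta^{-k}$: the factor $d_k$ is unmovable, since $\delta \in N \setminus \{1\}$ together with $d_k$ being $N$-reduced forces $\delta \not\le_L d_k$, and hence $\Delta = \delta \omega \not\le_L d_k$. Passing to the corresponding left $\Delta$-form $\delta^{-k} = \Delta^{-k} \Phi^{k}(d_k)$ and applying Corollary \ref{corl2_4} with $p = -k$ and $\lg(\Phi^{k}(d_k)) = \lg(d_k) = k$ gives $\lg(\delta^{-k}) = \max(0, k, k) = k$. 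Therefore
\[
\diam\bigl(\pi_H(d_k)\bigr) \ge d(1, \delta^{-k}) = k.
\]

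The only genuinely delicate point is confirming that the factorisation $\omega_1 \cdots \omega_k$ of $d_k$ is already in left greedy normal form; everything else is a direct assembly of the preparatory results of Sections \ref{sec2} and \ref{sec3}.
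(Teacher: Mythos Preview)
Your proof is correct and follows essentially the same strategy as the paper: show $d_k\in T$, verify that $\omega_1\cdots\omega_k$ is a left greedy normal form via Theorem~\ref{thm4_1} and Lemma~\ref{lem3_5}, and exhibit $1$ and $\delta^{-k}$ as two points of $\pi_H(d_k)$ at distance $k$. The only minor difference is in the last step: the paper computes $\lg(\delta^{-k})$ by applying Corollary~\ref{corl2_4} inside the parabolic structure $(H,N,\delta)$ and then invoking Theorem~\ref{thm2_5} to identify lengths in $H$ and in $G$, whereas you compute it directly in $G$ via the right $\Delta$-form $\delta^{-k}=d_k\Delta^{-k}$, using that $d_k$ is unmovable. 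Both routes are short and equally valid.
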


\begin{proof}
By Lemma \ref{lem3_7} the element $d_k$ is $N$-reduced, hence $d_k \in T$, and therefore, by Theorem \ref{thm3_4}, $\lg (H d_k) = \lg (d_k)$. 
It follows that $d(d_k, H) = \lg(d_k)$, $1 \in \pi_H(d_k)$, and $\beta \in \pi_H (d_k)$ if and only if $\lg (\beta^{-1} d_k) = \lg(d_k)$ and $\beta \in H$.
We have $\delta \Delta = \Delta \, \sigma (\omega)$, hence $\sigma (\omega) = \Phi^{-1} (\delta)$, and therefore $\sigma (\omega_i) = \Phi^{-i} (\delta)$ for all $i \in \{1, \dots, k\}$.
Let $i \in \{1, \dots, k-1 \}$. 
Then $\omega_{i+1} = \Phi^{-i} (\omega)$ and, by Lemma \ref{lem3_5}, $\delta \wedge_L \omega = 1$, hence $\sigma (\omega_i) \wedge_L \omega_{i+1} = \Phi^{-i} (\delta) \wedge_L \Phi^{-i} (\omega) = 1$. 
It follows by Theorem \ref{thm4_1} that $\omega_1 \omega_2 \cdots \omega_k$ is the greedy normal form of $d_k$, hence $\lg (d_k) = k$.
On the other hand, $\delta^k d_k = \Delta^k$, hence, by Corollary \ref{corl2_4}, $\lg (\Delta^k) = k$. 
Since $\delta^k \in H$, this shows that $\delta^{-k} \in \pi_H (d_k)$. 
Finally, by Corollary \ref{corl2_4}, $\delta^k$ has length $k$ with respect to $\Div(\delta) \setminus \{1 \}$, hence, by Theorem \ref{thm2_5}, $\lg (\delta^{-k}) = \lg (\delta^k) = k$.
So, $\diam (\pi_H (d_k)) \ge k$.
\end{proof}

\begin{corl}\label{corl5_4}
Suppose that $ H \neq G$ (and $H \neq 1$).
Then $G$ does not have bounded projections on $H$.
\end{corl}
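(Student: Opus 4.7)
The plan is to observe that the definition of $K$-bounded projections already forces a uniform bound on $\diam(\pi_H(\alpha))$ for every single $\alpha \in G$, and then invoke Lemma \ref{lem5_3} to exhibit elements that violate this bound for arbitrarily large constants.

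More concretely, I would argue by contradiction. Suppose that $(G,\SS)$ has $K$-bounded projections on $H$ for some positive constant $K$. Pick any element $s \in \SS$ (which is nonempty, since $\Delta \neq 1$, as $G$ is a Garside group and $H\neq 1$ implies $\delta \neq 1$ so in particular $\Delta \neq 1$). For an arbitrary $\alpha \in G$, set $\alpha_1 = \alpha$ and $\alpha_2 = \alpha s$; then $d(\alpha_1,\alpha_2) = \lg(s) = 1$. By the bounded projections hypothesis, $\diam(\pi_H(\alpha_1) \cup \pi_H(\alpha_2)) \le K$, and since $\pi_H(\alpha) = \pi_H(\alpha_1) \subseteq \pi_H(\alpha_1) \cup \pi_H(\alpha_2)$, it follows that $\diam(\pi_H(\alpha)) \le K$ for every $\alpha \in G$.

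To reach a contradiction I would then apply this to the elements $d_k = \omega_1 \omega_2 \cdots \omega_k$ of Lemma \ref{lem5_3}: choosing any integer $k > K$ yields $\diam(\pi_H(d_k)) \ge k > K$, contradicting the uniform bound derived above. Hence no such $K$ can exist, and $G$ does not have bounded projections on $H$.

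There is no real obstacle at this stage: all the genuine content is packaged into Lemma \ref{lem5_3}, and the corollary is essentially an unpacking of the definition combined with that lemma.
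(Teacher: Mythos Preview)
Your proposal is correct and follows essentially the same approach as the paper: both pick a neighbour of $d_k$ at distance $1$ and observe that $\diam(\pi_H(d_k)\cup\pi_H(\alpha_k))\ge\diam(\pi_H(d_k))\ge k$ by Lemma~\ref{lem5_3}. The only difference is cosmetic --- you phrase it as a contradiction and spell out why $\SS$ is nonempty, while the paper states the unbounded family directly.
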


\begin{proof}
For each $k \ge 1$ we choose $\alpha_k \in G$ such that $d(d_k, \alpha_k) = 1$. 
Then, by Lemma \ref{lem5_3}, $\diam (\pi_H (d_k) \cup \pi_H (\alpha_k)) \ge \diam (\pi_H (d_k)) \ge k$, hence there is no $K>0$ such that $\diam (\pi_H (d_k) \cup \pi_H (\alpha_k)) \le K$  for all $k \in \N$.
\end{proof}

The rest of the section is dedicated to the proof of the following.

\begin{thm}\label{thm5_5}
The group $G$ has $5$-fellow projections on $H$ with respect to $\SS$.
\end{thm}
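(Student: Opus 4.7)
The plan is to exploit the transversal $T$ from Section \ref{sec3}, together with the length formulas of Lemmas \ref{lem3_8}, \ref{lem3_9}, and \ref{lem3_10}, to control how $\pi_H$ varies under right-multiplication by a single generator. By $H$-equivariance of $\pi_H$ (i.e.\ $\pi_H(\gamma\alpha) = \gamma\pi_H(\alpha)$ for $\gamma \in H$) and the left $G$-invariance of $d$, one may assume $\alpha_1 = \theta_1 \in T$, so $\alpha_2 = \theta_1 s^\epsilon$ with $s \in \SS$ and $\epsilon \in \{\pm 1\}$. Let $\theta_2 \in T$ represent $H\alpha_2$ and write $\theta_1 s^\epsilon = \mu \theta_2$ with $\mu \in H$, uniquely determined by Theorem \ref{thm3_3}. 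Define $Q(\theta) := \{q \in H \mid \lg(q\theta) = \lg(\theta)\}$; then Theorem \ref{thm3_4} gives $\pi_H(\theta_1) = Q(\theta_1)^{-1}$ and $\pi_H(\alpha_2) = \mu\, Q(\theta_2)^{-1}$. Given $\beta_1 = q_1^{-1}$ with $q_1 \in Q(\theta_1)$, I seek $q_2 \in Q(\theta_2)$ with $\lg(q_1 \mu q_2^{-1}) \le 5$, so that $\beta_2 := \mu q_2^{-1} \in \pi_H(\alpha_2)$ satisfies $d(\beta_1, \beta_2) = \lg(q_1 \mu q_2^{-1}) \le 5$.

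Since $d(\alpha_1, \alpha_2) = 1$ and $\lg(\theta_i) = \lg(H\alpha_i)$, one has $|\lg(\theta_2) - \lg(\theta_1)| \le 1$. In the length-increasing case $\lg(\theta_2) = \lg(\theta_1) + 1$, the estimate $\lg(q_1 \mu \theta_2) = \lg(q_1 \theta_1 s^\epsilon) \le \lg(\theta_1) + 1 = \lg(\theta_2)$, combined with $\lg(q_1 \mu \theta_2) \ge \lg(\theta_2)$ (as $q_1 \mu \theta_2 \in H\theta_2$), yields $q_1 \mu \in Q(\theta_2)$; choosing $q_2 = q_1 \mu$ gives $\beta_2 = \beta_1$ and $d = 0$. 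For the remaining cases $\lg(\theta_2) \in \{\lg(\theta_1), \lg(\theta_1) - 1\}$, one must compute $\mu$ and $\theta_2$ explicitly. Writing $\theta_1 = c_1 \Delta^{-p_1}$ in right $\Delta$-form, one expands $\theta_1 s^\epsilon = c_1 \Phi^{-p_1}(s^\epsilon) \Delta^{-p_1}$ (using $s^{-1} = \sigma(s)\Delta^{-1}$ when $\epsilon = -1$), and then applies the $N$-tail operation of Lemma \ref{lem3_1} together with at most one step of $\omega$-reduction via Lemma \ref{lem3_5} to bring the result to $(H,N)$-reduced form. A finite case analysis, split according to whether $p_1 = 0$ or $p_1 \ge 1$ and the sign of $\epsilon$, shows that $\mu$ always has a short normal form, with $\lg(\mu)$ bounded by a small absolute constant.

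The key remaining task is to pair each $q_1 \in Q(\theta_1)$ with a nearby $q_2 \in Q(\theta_2)$. Writing $q_1$ in its right $\delta$-form $q_1 = b \delta^{-k}$ (with $b \in N$, $\delta \not\le_L b$, $k \ge 0$), Lemmas \ref{lem3_8}, \ref{lem3_9}, and \ref{lem3_10} give the right $\Delta$-form of $q_1 \theta_1$, and the condition $q_1 \in Q(\theta_1)$ translates into explicit constraints on $b$ and $k$; performing the parallel analysis for $Q(\theta_2)$, one produces for each $q_1$ a candidate $q_2 \in Q(\theta_2)$ that differs from $q_1 \mu$ by at most a factor in $\Div(\delta) \cup \Div(\delta)^{-1}$. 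The main obstacle is verifying uniformly, in each subcase of the length-non-increasing regime, that this correction is indeed bounded; the explicit constant $5$ then arises by combining $\lg(\mu) \le 2$ with at most a $\delta^{\pm 1}$ correction on each of the left and right of $q_1 \mu$ and a possible extra simple-element adjustment reflecting the $N$-tail or $\omega$-reduction mismatch between $\theta_1$ and $\theta_2$.
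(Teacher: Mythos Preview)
Your reduction to $\alpha_1 = \theta_1 \in T$, the description $\pi_H(\theta) = Q(\theta)^{-1}$, and the clean disposal of the length-increasing case are all correct and parallel the paper's setup (via Lemma~\ref{lem5_8}). But the proposal stops exactly where the real content begins. In the length-non-increasing regime you only assert that a case analysis will produce, for each $q_1 \in Q(\theta_1)$, a $q_2 \in Q(\theta_2)$ with $q_1\mu q_2^{-1}$ consisting of a single factor in $\Div(\delta)\cup\Div(\delta)^{-1}$ plus small extras. This is neither carried out nor accurate: in the hardest case (Lemma~\ref{lem5_11}) the correction element $b_3$ has length up to $4$, and the constant $5$ emerges as $p+\lg(b_3)\le 1+4$, not from the bookkeeping you sketch. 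Your bound $\lg(\mu)\le 2$ is also off by one when $\theta_1\notin M$, since there $\mu=b_2\delta^{-p}$ with $\lg(b_2)\le 2$ and $p\le 1$.

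The genuine missing idea is the mechanism that converts the estimate $\lg((q_1\mu)\theta_2)\le \lg(\theta_2)+C$ into an element of $Q(\theta_2)$ close to $q_1\mu$; nothing in Lemmas~\ref{lem3_8}--\ref{lem3_10} does this. The paper's key tool is Lemma~\ref{lem5_6}: if $a\le_R b$ and $\lg(a)=m$, then $a$ already right-divides the length-$m$ right-greedy suffix of $b$. Applied to $c_2\le_R b_1'b_2'c_2$ (with $b_1',b_2'$ the $\Phi^{-p}$-conjugates of $b_1,b_2$), this factors $b_1'b_2'c_2=b_3'(b_4'c_2)$ with $\lg(b_4'c_2)=\lg(c_2)$ and $\lg(b_3')=\lg(b_1'b_2'c_2)-\lg(c_2)$ bounded; the candidate $q_2$ is then built from $b_4'$. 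You also need, and do not mention, the preliminary fact that $Q(\theta_1)\subset N$ whenever $\theta_1\notin M$; this is where Lemma~\ref{lem5_7} enters, and without it one cannot even write $q_1\theta_1$ in the positive form required to apply the right-greedy argument. Absent these two ingredients the outline cannot be completed.
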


The following three lemmas are preliminaries to the proof of Theorem \ref{thm5_5}.
Lemma \ref{lem5_6} is known to experts but to our knowledge is nowhere explicitly written in the literature.

\begin{lem}\label{lem5_6}
Let $a,b \in M$ and let $a = u_m \cdots u_2 u_1$ and $b = v_n \cdots v_2 v_1$ be the right greedy normal forms of $a$ and $b$, respectively.
If $a \le_R b$, then $m \le n$ and for all $i \in \{1,\dots, m\}$, one has that $u_i u_{i-1} \cdots u_1 \le_R v_i v_{i-1} \cdots v_1$.
\end{lem}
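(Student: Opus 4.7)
The statement splits naturally into the length bound $m\le n$ and the divisibility chain $u_i\cdots u_1\le_R v_i\cdots v_1$. For the length bound, I would invoke Theorem~\ref{thm2_1}(1), which identifies $m=\lg(a)$ and $n=\lg(b)$, together with Lemma~\ref{lem3_6}: since $a\le_R b$ one writes $b=ca$ with $c\in M$, whence $\lg(b)=\lg(c\cdot a\cdot 1)\ge\lg(a)$, i.e.\ $n\ge m$.

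For the divisibility, my strategy is to establish the auxiliary characterization
\[
u_i u_{i-1}\cdots u_1 \;=\; a\wedge_R\Delta^i\qquad (0\le i\le m),
\]
with the empty product interpreted as $1$, and the analogous identity for $b$. This is the right-handed counterpart of a well-known description of the factors of left greedy normal forms. Granted it, the lemma follows immediately by monotonicity of $\wedge_R$: since $a\le_R b$,
\[
u_i u_{i-1}\cdots u_1 \;=\; a\wedge_R\Delta^i \;\le_R\; b\wedge_R\Delta^i \;=\; v_i v_{i-1}\cdots v_1.
\]

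I would prove the characterization by induction on $i$. The base case $i=1$ is the defining property of the right greedy normal form: $u_1$ is the largest simple right-divisor of $a$. For the inductive step, set $c_i:=a\wedge_R\Delta^i$. The easy inclusion $u_i\cdots u_1\le_R c_i$ uses that $u_i\cdots u_1\le_R a$ is obvious and that a product of $i$ simple elements right-divides $\Delta^i$ (elementary from $\Delta u^{-1}\in\Div(\Delta)$ for $u$ simple). For the reverse inclusion, note that $u_{i-1}\cdots u_1=a\wedge_R\Delta^{i-1}\le_R c_i$ (using $\Delta^{i-1}\le_R\Delta^i$), so $c_i=e_i(u_{i-1}\cdots u_1)$ for a unique $e_i\in M$. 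From $c_i\le_R a=(u_m\cdots u_i)(u_{i-1}\cdots u_1)$ and right-cancellation, $e_i\le_R u_m\cdots u_i$; the truncation property of right greedy normal forms yields $(u_m\cdots u_i)\wedge_R\Delta=u_i$, and the easy inclusion gives $u_i\le_R e_i$. Assuming $e_i$ is simple, one obtains $e_i\le_R (u_m\cdots u_i)\wedge_R\Delta=u_i$, and together with $u_i\le_R e_i$ this forces $e_i=u_i$, closing the induction.

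The main obstacle is therefore to verify that $e_i\le_R\Delta$. One way is to extract this from $c_i\le_R\Delta^i$ by writing $\Delta^{i-1}=s(u_{i-1}\cdots u_1)$ with $s\in M$ and analyzing the resulting relation $e_i\le_R \Delta s$ via the local $\wedge_R$-condition characterizing right greedy normal forms (the right-sided dual of Theorem~\ref{thm4_1}). This is a routine but slightly delicate Garside-theoretic calculation, the key point being that the leading $\Delta$-block absorbs at most one extra simple factor on top of the factorization $c_{i-1}$ already coming from $\Delta^{i-1}$. Once proven, the characterization is in place and the lemma follows by the monotonicity argument above.
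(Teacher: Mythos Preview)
Your approach is correct in outline but genuinely different from the paper's. The paper argues by direct induction on $i$: assuming $u_{i-1}\cdots u_1\le_R v_{i-1}\cdots v_1$, it writes $v_{i-1}\cdots v_1=c\,u_{i-1}\cdots u_1$, observes $u_m\cdots u_i\le_R v_n\cdots v_i\,c$, and then invokes Dehornoy \cite[Lemme~3.10]{Dehor1} to get $(v_n\cdots v_i\,c)\wedge_R\Delta=(v_i c)\wedge_R\Delta$, whence $u_i\le_R v_i c$ and the inductive step follows. Your route via the absolute identity $u_i\cdots u_1=a\wedge_R\Delta^i$ is more conceptual and, once that identity is in hand, the conclusion is a one-line lattice monotonicity; the paper's route is more hands-on but entirely self-contained modulo the single citation.

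One caution: your sketched justification that $e_i$ is simple --- via $e_i\le_R\Delta s$ and ``the local $\wedge_R$-condition'' --- is the weak point. The product $\Delta s$ is \emph{not} right-greedy (since $\Delta\le_R\Delta s$), so the dual of Theorem~\ref{thm4_1} does not apply as stated, and it is not clear how to extract $e_i\le_R\Delta$ along that line. A cleaner way to finish your characterization is to compute the right greedy normal form of $c_i:=a\wedge_R\Delta^i$ directly: one checks inductively that $(c_i\,(u_j\cdots u_1)^{-1})\wedge_R\Delta=u_{j+1}$ for $0\le j\le i-1$, using $c_i\le_R a$ for the upper bound and the ``easy inclusion'' $u_{j+1}\cdots u_1\le_R c_i$ for the lower bound; combined with $\lg(c_i)\le\lg(\Delta^i)=i$ (Lemma~\ref{lem3_6}) this forces $c_i=u_i\cdots u_1$. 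Alternatively, simply cite the identity $a\wedge_R\Delta^i=u_i\cdots u_1$ as the standard Garside-theoretic fact it is (cf.\ \cite{DehPar1,DehEtAl}) and proceed.
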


\begin{proof}
We already know by Theorem \ref{thm2_1} and Lemma \ref{lem3_6} that $m = \lg (a) \le \lg (b) = n$.
It remains to show that $u_i \cdots u_1 \le_R v_i \cdots v_1$ for all $i \in \{1, \dots, m\}$.
We argue by induction on $i$.
Note that for $x, y \in M$, if $x \le_R y$ then $(x \wedge_R \Delta) \leq_R (y \wedge_R \Delta)$.
Therefore, the case $i = 1$ is true by definition of a right greedy normal form. 
So, we can assume that $i \ge 2$ and that the inductive hypothesis holds.
By induction we have  $u_{i-1} \cdots u_1 \le_R v_{i-1} \cdots v_1$.
Let $c \in M$ such that $c u_{i-1} \cdots u_1 = v_{i-1} \cdots v_1$.
We have $u_i \le_R \Delta$, $u_m \cdots u_i \le_R v_n \cdots v_{i+1} v_i c$ and, by Dehornoy \cite[Lemme 3.10]{Dehor1}, $(v_n \cdots v_{i+1} v_i c) \wedge_R \Delta = (v_i c) \wedge_R \Delta$, hence $u_i \le_R v_i c$, and therefore $u_i u_{i-1} \cdots u_1 \le_R v_i v_{i-1} \cdots v_1$.
\end{proof}

\begin{lem}\label{lem5_7}
Let $a$ be an element of $M$ which can be written in the form $a = \omega_1 \omega_2 \cdots \omega_k c$, where $\omega_i = \Phi^{-i+1} (\omega)$ for all $i \in \{1, \dots, k \}$, $\Phi^{-k} (\omega) \not\le_L c$, and $c$ is $\Phi^{-k} (N)$-reduced. 
Then $\lg (a) = \lg(c) + k$.
\end{lem}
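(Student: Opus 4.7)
The plan is to show that $a$ is already written, essentially, in left greedy normal form. Concretely, let $c = u_1 u_2 \cdots u_p$ be the left greedy normal form of $c$ (so $p = \lg(c)$). I will argue that $a = \omega_1 \omega_2 \cdots \omega_k u_1 u_2 \cdots u_p$ is the left greedy normal form of $a$ in $M$, which by Theorem~\ref{thm2_1}(1) immediately gives $\lg(a) = k + p = \lg(c) + k$.

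To verify the normal form I would invoke Theorem~\ref{thm4_1} in the case of no negative factors: I must check that each factor belongs to $\SS = \Div(\Delta) \setminus \{1\}$, and that for every pair of consecutive factors $x,y$ in the sequence $\omega_1, \ldots, \omega_k, u_1, \ldots, u_p$ one has $\sigma(x) \wedge_L y = 1$. Three families of consecutive pairs appear. For the pairs $(\omega_i, \omega_{i+1})$: the identity $\sigma \circ \Phi = \Phi \circ \sigma$, together with $\sigma(\omega) = \omega^{-1}\Delta = \Phi^{-1}(\delta)$, yields $\sigma(\omega_i) = \Phi^{-i}(\delta)$, so $\sigma(\omega_i) \wedge_L \omega_{i+1} = \Phi^{-i}(\delta \wedge_L \omega) = 1$ by Lemma~\ref{lem3_5}. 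For the transition pair $(\omega_k, u_1)$ (present when $p \ge 1$): since $c$ is $\Phi^{-k}(N)$-reduced, one has $c \wedge_L \Phi^{-k}(\delta) = 1$, and because $u_1 \le_L c$ it follows that $u_1 \wedge_L \sigma(\omega_k) = u_1 \wedge_L \Phi^{-k}(\delta) = 1$. The remaining pairs $(u_j, u_{j+1})$ come from the normal form of $c$, so the required condition holds by Theorem~\ref{thm4_1} applied to $c$.

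The only preliminary issue is checking that the factors really lie in $\SS$, i.e.\ that none of them is trivial. The $u_j$ are non-trivial by virtue of being the factors of a greedy normal form, and each $\omega_i$ is the image under $\Phi^{-i+1}$ of $\omega = \delta^{-1}\Delta$, which satisfies $\delta\omega = \Delta$ and hence lies in $\Div(\Delta)$; moreover the hypothesis $\Phi^{-k}(\omega) \not\le_L c$ forces $\Phi^{-k}(\omega) \ne 1$, so $\omega$ and all its $\Phi$-conjugates are non-trivial. The main technical point is therefore the transition condition at the junction between the $\omega$-block and the $u$-block, where the $\Phi^{-k}(N)$-reducedness of $c$ is precisely what is needed to glue the two pieces $\omega_1 \cdots \omega_k$ and $u_1 \cdots u_p$ into a single left greedy normal form. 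Once this is in place, Theorem~\ref{thm2_1}(1) gives $\lg(a) = k + \lg(c)$; the degenerate cases $k = 0$ (where $a = c$) and $c = 1$ (where $a = \omega_1 \cdots \omega_k$) are immediate from the same analysis.
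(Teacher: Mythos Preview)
Your proof is correct. It differs in organization from the paper's argument: the paper proceeds by induction on $k$, showing at each step that $a \wedge_L \Delta = \omega_1$ via a direct lattice computation (using that $\omega x_1 \le_L \Delta$ forces $x_1 \le_L \Phi^{-1}(\delta)$, and then that $\omega_2 \cdots \omega_k c$ is $\Phi^{-1}(N)$-reduced by Lemma~\ref{lem3_7}), and then peeling off $\omega_1$ and invoking the inductive hypothesis. You instead exhibit the entire left greedy normal form of $a$ in one stroke by checking the local conditions of Theorem~\ref{thm4_1}. The essential computation is the same --- in particular the identity $\sigma(\omega_i) = \Phi^{-i}(\delta)$ and its consequence $\sigma(\omega_i) \wedge_L \omega_{i+1} = \Phi^{-i}(\delta \wedge_L \omega) = 1$ appears verbatim in the proof of Lemma~\ref{lem5_3} --- but your packaging is more explicit about what the normal form actually is, while the paper's induction avoids writing out the normal form of $c$ and handles the junction between $\omega_k$ and $c$ implicitly through the base case. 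Both approaches use the hypothesis $\Phi^{-k}(\omega)\not\le_L c$ only to ensure $\omega \ne 1$, as you observed.
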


\begin{proof}
We argue by induction on $k$.
The case $k = 0$ being trivial, we can assume that $k \ge 1$ and that the inductive hypothesis holds.
It suffices to show that $a \wedge_L \Delta = \omega = \omega_1$. 
Indeed, in that case, by Theorem \ref{thm2_1}, we have $\lg (a) = \lg(\omega_2 \cdots \omega_k c) + 1$. Since $\Phi( \SS ) = \SS$, $\Phi$ preserves $\lg$, thus $\lg (\omega_2 \cdots \omega_k c) = \lg (\omega_1 \omega_2 \cdots  \omega_{k-1} \Phi(c))$ and $\Phi(c)$ is $\Phi^{-k+1}(N)$-reduced. By induction, $\lg (\omega_2 \cdots \omega_k c) = \lg(c) + k -1$, hence $\lg (a) = \lg (c) + k$. 

Set $x = a \wedge_L \Delta$. 
Since $\omega = \omega_1 \le_L a$ and $\omega \le_L \Delta$, we have $\omega \le_L x$. 
Let $x_1 \in M$ such that $x = \omega x_1$. 
We have $\omega x_1\, \sigma (x) = x\, \sigma(x) = \Delta = \omega \delta_1$, where $\delta_1 = \Phi^{-1} (\delta)$, hence $x_1 \le_L \delta_1$, thus $x_1 \in \Phi^{-1}(N)$, since $\Phi^{-1} (N)$ is a parabolic submonoid of $M$.
We also have $x_1 \le_L \omega_2 \cdots \omega_k c$ and $\omega_2 \cdots \omega_k c$ is $\Phi^{-1} (N)$-reduced by Lemma \ref{lem3_7}, hence $x_1 = 1$ and $x = \omega$. 
\end{proof}

\begin{lem}\label{lem5_8}
Let $\alpha \in G$, $\ell = \lg (H \alpha)$ and $\Min (\alpha) = \{ \gamma \in H \alpha \mid \lg(\gamma) = \ell \}$. 
Then $\ell = d (\alpha, H)$ and we have a bijective map $\Min (\alpha) \to \pi_H (\alpha)$ which sends $\gamma$ to $\alpha \gamma^{-1}$ for all $\gamma \in \Min (\alpha)$.
\end{lem}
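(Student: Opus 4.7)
The plan is to recognize that the two sets $\Min(\alpha)$ and $\pi_H(\alpha)$ are just two parameterizations of the same minimum-distance phenomenon, related by the involution $\gamma \leftrightarrow \alpha\gamma^{-1}$, and that the equality $\ell = d(\alpha,H)$ follows from symmetry of word length under inversion. The only genuinely needed ingredient from the Garside setup is that $\SS \cup \SS^{-1}$ is a symmetric alphabet, so that $\lg(x) = \lg(x^{-1})$ for every $x \in G$; everything else is pure bookkeeping.

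First I would establish the equality $\ell = d(\alpha,H)$. By definition $d(\alpha,H) = \min_{\beta \in H} \lg(\alpha^{-1}\beta)$, and since $\lg$ is invariant under inversion we rewrite this as $\min_{\beta \in H} \lg(\beta^{-1}\alpha)$. The map $H \to H\alpha$, $\beta \mapsto \beta^{-1}\alpha$, is a bijection, so this minimum equals $\min_{\gamma \in H\alpha} \lg(\gamma) = \ell$.

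Next I would introduce the two maps $\phi : \Min(\alpha) \to H$, $\gamma \mapsto \alpha\gamma^{-1}$, and $\psi : \pi_H(\alpha) \to H\alpha$, $\beta \mapsto \beta^{-1}\alpha$. For any $\gamma = h\alpha \in \Min(\alpha)$ (with $h \in H$) we have $\phi(\gamma) = h^{-1} \in H$ and $d(\alpha,\phi(\gamma)) = \lg(\alpha^{-1}h^{-1}) = \lg(\gamma^{-1}) = \lg(\gamma) = \ell = d(\alpha,H)$, so $\phi(\gamma) \in \pi_H(\alpha)$. Symmetrically, for $\beta \in \pi_H(\alpha)$, the element $\psi(\beta) = \beta^{-1}\alpha$ lies in $H\alpha$ and has length $\lg(\beta^{-1}\alpha) = \lg(\alpha^{-1}\beta) = d(\alpha,\beta) = \ell$, so $\psi(\beta) \in \Min(\alpha)$.

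Finally I would check that $\phi$ and $\psi$ are mutually inverse, which is immediate: $\psi(\phi(\gamma)) = (\alpha\gamma^{-1})^{-1}\alpha = \gamma$ and $\phi(\psi(\beta)) = \alpha(\beta^{-1}\alpha)^{-1} = \beta$. This yields the claimed bijective correspondence $\Min(\alpha) \to \pi_H(\alpha)$. There is no real obstacle in this proof; the only subtlety worth flagging is the use of $\lg(x)=\lg(x^{-1})$, which holds because the generating alphabet used to measure $\lg$ is $\SS \cup \SS^{-1}$, a symmetric set.
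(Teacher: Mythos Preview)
Your proof is correct and follows essentially the same route as the paper's: both arguments establish $\ell = d(\alpha,H)$ via the bijection $\beta \mapsto \beta^{-1}\alpha$ between $H$ and $H\alpha$ together with $\lg(x)=\lg(x^{-1})$, and both verify that $\gamma \mapsto \alpha\gamma^{-1}$ and $\beta \mapsto \beta^{-1}\alpha$ are mutually inverse maps between $\Min(\alpha)$ and $\pi_H(\alpha)$. The only difference is cosmetic: you name the maps $\phi,\psi$ explicitly and check the compositions, whereas the paper compresses this into two short sentences.
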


\begin{proof}
If $\beta \in H$, then $d(\beta, \alpha) = \lg (\beta^{-1} \alpha) \ge \ell$. 
On the other hand, if $\gamma \in \Min (\alpha)$ and $\beta = \alpha \gamma^{-1}$, then $\beta \in H$ and $d (\beta, \alpha) = \lg(\beta^{-1} \alpha) = \lg (\gamma) = \ell$.
So, $\ell = d(\alpha, H)$ and $\beta \in \pi_H(\alpha)$.
Reciprocally, if $\beta \in \pi_H(\alpha)$ and $\gamma = \beta^{-1} \alpha \in H \alpha$, then $\lg (\gamma) = d(\beta, \alpha) = \ell$ and $\gamma \in \Min (\alpha)$.
\end{proof}

The proof of Theorem \ref{thm5_5} is divided into three cases, each case being treated in one of the following three lemmas.

\begin{lem}\label{lem5_9}
Let $\alpha \in G$, $\beta \in \pi_H (\alpha)$ and $u \in \SS$.
Denote by $\theta$ the element of $T$ such that $H \alpha = H \theta$ and assume that $\theta = c \in M$. 
Then there exists $\beta' \in \pi_H(\alpha u)$ such that $d(\beta, \beta') \le 3$.
\end{lem}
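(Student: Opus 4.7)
My approach is to translate the problem to one about minimal-length coset representatives via Lemma~\ref{lem5_8}, exploit the explicit description of $\Min(\alpha)$ afforded by Lemma~\ref{lem3_8}, and then produce $\beta'$ of the form $\alpha u(b'c')^{-1}$ for a well-chosen element $b'\in N$.

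To begin, I would identify $\beta\in\pi_H(\alpha)$ with $\gamma:=\beta^{-1}\alpha\in\Min(\alpha)$ via Lemma~\ref{lem5_8}; since $\gamma\in H\theta$, I write $\gamma=hc$ with $h\in H$, put $h$ in left orthogonal form $h=b_2^{-1}b_1$ (so $b_1,b_2\in N$ by Theorem~\ref{thm2_5}\,(6)), and apply Lemma~\ref{lem3_8} to conclude that $b_2^{-1}(b_1c)$ is the left orthogonal form of $\gamma$. By Theorem~\ref{thm2_1}\,(2) and Lemma~\ref{lem3_6}, $\lg(\gamma)=\lg(b_2)+\lg(b_1c)\ge\lg(b_2)+\lg(c)$, so the minimality $\lg(\gamma)=\lg(c)$ forces $b_2=1$ and $\lg(b_1c)=\lg(c)$. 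Hence $\gamma=bc$ with $b:=b_1\in N$ and $\lg(bc)=\lg(c)$.

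Since $cu\in M$, the transversal representative of $H\alpha u$ lies in $T\cap M$: write $cu=\tau(cu)\,c'$ with $\tau(cu)\in N$ and $c'\in M$ being $N$-reduced; then $c'\in T$ and $\theta'=c'$. The bounds $|\lg(H\alpha u)-\lg(H\alpha)|\le 1$ and $\lg(cu)\le\lg(c)+1$ give $|\lg(c)-\lg(c')|\le 1$. A second application of Lemma~\ref{lem3_8} now identifies $\Min(\alpha u)$ with $\{b'c':b'\in N,\ \lg(b'c')=\lg(c')\}$. For $\beta':=\alpha u(b'c')^{-1}\in\pi_H(\alpha u)$ with such a $b'$, the identity $\gamma u=bcu=b\tau(cu)\,c'$ yields
\[
d(\beta,\beta')\;=\;\lg(\gamma u\,(b'c')^{-1})\;=\;\lg(b\tau(cu)\,b'^{-1}).
\]

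The proof thus reduces to producing $b'\in N$ with $\lg(b'c')=\lg(c')$ and $\lg(b\tau(cu)\,b'^{-1})\le 3$. The two natural extremes are $b'=b\tau(cu)$ (giving distance~$0$, but requiring $\lg(bcu)=\lg(c')$) and $b'=1$ (always admissible, but giving the a priori unbounded distance $\lg(b\tau(cu))$). When $\lg(c')=\lg(c)+1$, Lemma~\ref{lem3_6} forces $\lg(bcu)=\lg(c')$ and the first choice works with distance~$0$. Otherwise the excess $r=\lg(bcu)-\lg(c')$ lies in $\{0,1,2\}$, and my plan is to read off $b'$ from the left greedy normal form of $bcu$ by peeling off its first $r$ factors and converting them back into the $N$-part via the identity $b\,\omega=\omega\,(\Phi^{-1}\circ\varphi)(b)$ of Lemma~\ref{lem3_5}.

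The main obstacle is precisely the length bound $\lg(b\tau(cu)\,b'^{-1})\le 3$. The delicacy is that $b$ and $\tau(cu)$ can individually be long, so the bulk of $b\tau(cu)$ must be absorbed into $c'$, and what remains ought to correspond to the $r\le 2$ excess factors plus at most one $\omega$-correction factor needed to re-enter $N$. Carrying this out cleanly requires careful bookkeeping of the left greedy normal form of $bcu$ through its factorization as $b\tau(cu)\,c'$, together with the $N$-reducedness of $c'$; this is what accounts for the constant~$3$ rather than the naive bound~$2$.
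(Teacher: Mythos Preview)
Your setup is correct and follows the paper: you identify $\gamma=\beta^{-1}\alpha\in\Min(\alpha)$, use Lemma~\ref{lem3_8} to deduce $\gamma=b_1c$ with $b_1\in N$ and $\lg(b_1c)=\lg(c)$, write $cu=b_2c'$ with $b_2=\tau_N(cu)$ and $c'\in T\cap M$, and reduce to producing $b'\in N$ with $\lg(b'c')=\lg(c')$ and $\lg(b_1b_2b'^{-1})\le 3$. Your observation $|\lg(c)-\lg(c')|\le1$ via coset lengths is in fact slightly sharper than the paper's $\lg(c')\ge\lg(c)-2$.

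The genuine gap is in the construction of $b'$. Your plan---peel off the first $r$ factors of the \emph{left} greedy normal form of $b_1cu$ and use the identity $b\omega=\omega\,(\Phi^{-1}\circ\varphi)(b)$ of Lemma~\ref{lem3_5}---does not work. The first $r$ left-greedy factors of $b_1cu$ have no reason to lie in $N$ (the element $c$ is not in $N$, so its simple factors contaminate the leading left-greedy letters), and Lemma~\ref{lem3_5} is about the specific element $\omega=\delta^{-1}\Delta$, which plays no role whatsoever in the case $\theta\in M$; it enters only in Lemmas~\ref{lem5_10} and~\ref{lem5_11} where $\theta\notin M$. There is no ``$\omega$-correction'' mechanism available here.

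What the paper does instead is use the \emph{right} greedy normal form of $b_1b_2c'=b_1cu$, together with Lemma~\ref{lem5_6}. Writing $b_1b_2c'=v_n\cdots v_1$ in right greedy form with $n=\lg(b_1b_2c')$ and $m=\lg(c')$, Lemma~\ref{lem5_6} gives $c'\le_R v_m\cdots v_1$; setting $d=v_m\cdots v_1$, one has $\lg(d)=m$ and $d=b_4c'$ for some $b_4\in M$, while $b_3:=v_n\cdots v_{m+1}$ satisfies $b_3b_4=b_1b_2\in N$ (forcing $b_3,b_4\in N$) and $\lg(b_3)=n-m\le 3$. Then $\gamma'=b_4c'\in\Min(\alpha u)$ and $\beta^{-1}\beta'=b_3$. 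The point is that to isolate $c'$ as a \emph{right} divisor inside a short-length factor, you need the right greedy form; the left greedy form gives you control over left divisors of $b_1b_2c'$, which is the wrong end.
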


\begin{proof}
Set $\ell = \lg (H \alpha)$. 
By Theorem \ref{thm3_4}, $\lg(\theta) = \ell$ and $c=\theta \in \Min (\alpha)$.
By Lemma \ref{lem5_8} there exists $\gamma \in \Min (\alpha)$ such that $\beta = \alpha \gamma^{-1}$. 
Let $\beta_1 \in H$ such that $\beta_1 \theta = \gamma$. 
Let $b_1'^{-1} b_1$ be the left orthogonal form of $\beta_1$. 
By Lemma \ref{lem3_8} we have that $b_1' \wedge_L (b_1c) = 1$.
Thus, by Theorem \ref{thm2_1}, $\ell = \lg(\gamma) = \lg(b_1') + \lg (b_1c)$ and, by Lemma \ref{lem3_6}, $\lg(b_1 c) \ge \lg(c)$.
Since $\ell = \lg (c )$, it follows that $b_1' = 1$, $\beta_1 = b_1 \in N$ and $\gamma = b_1 c \in M$.

Set $b_2 = \tau_N (cu) \in N$ and denote by $c'$ the element of $M$ such that $ cu = b_2 c'$. 
Note that $c'$ is $N$-reduced, hence $c' \in T$.
Note also that $H c' = H \alpha u$, hence $\theta' = c'$ is the element of $T$ which lies in $H \alpha u$, and, by Theorem \ref{thm3_4}, $\lg (H \alpha u) = \lg(c')$.
Since $c$ is $N$-reduced, we have $b_2 \wedge_L c = 1$, hence, by Theorem \ref{thm2_1}, $\lg (b_2^{-1} c) = \lg (b_2) + \lg(c)$.
So, 
\[
\lg(c) + 1 \ge \lg (cu) =\lg(b_2c') \ge \lg(c') = \lg(b_2^{-1} cu) \ge \lg(b_2^{-1} c) -1 = \lg(b_2) + \lg(c) -1\,,
\]
hence $\lg(b_2) \le 2$.
It follows that  
\[
\lg(c') \ge \lg(b_2 c') - \lg(b_2) \ge \lg(cu) -2 \ge \lg(c)-2\,.
\]
Now we show that there exist $b_3, b_4 \in N$ such that $b_3 b_4 = b_1 b_2$, $\lg(b_4 c') = \lg(c')$ and $\lg(b_3) \le 3$.
Set $n = \lg (b_1 b_2 c')$ and $m= \lg(c')$.
We have  
\[
n = \lg(b_1 b_2 c') = \lg (b_1 c u) \le \lg(b_1 c) + 1 = \lg(c) + 1 \le \lg(c') + 3 = m+3\,.
\]
Let $b_1 b_2 c' = v_n \cdots v_2 v_1$ be the right greedy normal form of $b_1 b_2 c'$.
Then $c' \le_R v_n \cdots v_2 v_1$. 
Set $d = v_m \cdots v_2 v_1$. 
Since $\lg( c' )=m$, by Lemma \ref{lem5_6} we have $c' \le_R d$ and, by Theorem \ref{thm2_1}, $m = \lg (d)$. 
Let $b_4 \in M$ such that $b_4 c' = d$, and let $b_3 = v_n \cdots v_{m+1}$. 
We have $b_3 b_4 c' = b_1 b_2 c'$, hence $b_3 b_4 =  b_1 b_2$, therefore $b_3, b_4 \in N$ since $b_1 b_2 \in N$ and $N$ is a parabolic submonoid. 
By the above we also have $\lg(b_3) = n-m \le 3$ and $\lg(b_4 c') = \lg(d) = lg(c')$.

Set $\gamma' = b_4 c'$ and $\beta' = (\alpha u) \gamma'^{-1}$.
By the above, $\gamma' \in \Min (\alpha u)$, hence, by Lemma \ref{lem5_8}, $\beta' \in \pi_H (\alpha u)$. 
Moreover, 
\[
\beta^{-1} \beta' = \gamma u \gamma'^{-1} = b_1 c u c'^{-1} b_4^{-1} = b_1 b_2 b_4^{-1} = b_3\,,
\]
hence $d(\beta, \beta') = \lg(b_3) \le 3$.
\end{proof}

\begin{lem}\label{lem5_10}
Let $\alpha \in G$, $\beta \in \pi_H (\alpha)$ and $u \in \SS$.
Denote by $\theta$ the element of $T$ such that $H \theta = H \alpha$ and assume that $\theta \in M^{-1} \setminus \{1\}$.
Then there exists $\beta' \in \pi_H (\alpha u)$ such that $d(\beta, \beta') \le 3$.
\end{lem}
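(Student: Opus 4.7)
The plan is to adapt the argument of Lemma \ref{lem5_9} to account for the $\Delta^{-p}$-factor in the right $\Delta$-form $\theta = c\Delta^{-p}$.

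First, I would pin down $\gamma$. Writing $\theta = c\Delta^{-p}$ in right $\Delta$-form, the hypothesis $\theta \in M^{-1}\setminus\{1\}$ forces $c \leq_R \Delta^p$, and hence $\lg(c) \leq p$ by Lemma \ref{lem3_6}; Corollary \ref{corl2_4} together with Theorem \ref{thm3_4} then yields $\ell := \lg(H\alpha) = \lg(\theta) = p$. Using Lemma \ref{lem5_8}, fix $\gamma \in \Min(\alpha)$ with $\beta = \alpha\gamma^{-1}$ and write $\gamma = \beta_1\theta$ with $\beta_1 \in H$. If $\beta_1 \notin N$, Lemma \ref{lem3_10} forces the right $\Delta$-form of $\gamma$ to have exponent at most $-(p+1)$, giving $\lg(\gamma) \geq p+1 > \ell$, a contradiction. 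So $\beta_1 =: b_1 \in N$, and Lemma \ref{lem3_9} gives $\gamma = (b_1 c)\Delta^{-p}$ in right $\Delta$-form with $b_1 c$ unmovable; Corollary \ref{corl2_4} then forces $\lg(b_1 c) \leq p$.

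Second, I would write $\theta u$ as $b'\theta'$ with $\theta' \in T$, $b' \in H$, and $\lg(b')$ small. Setting $u' = \Phi^{-p}(u) \in \SS$, one has $\theta u = (cu')\Delta^{-p}$. Following the construction underlying Theorem \ref{thm3_3}, put $b_2 = \tau_N(cu') \in N$ and write $cu' = b_2 c'$ with $c'$ $N$-reduced; the argument of Lemma \ref{lem5_9} applies verbatim (since $c$ is $N$-reduced, $b_2 \wedge_L c = 1$) to give $\lg(b_2) \leq 2$. If $c'\Delta^{-p}$ is already in $T$ (i.e., $cu'$ unmovable and $\omega \not\leq_L c'$), set $\theta' = c'\Delta^{-p}$ and $b' = b_2$. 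Otherwise a single peel-off step, absorbing a leading $\omega = \delta^{-1}\Delta$ into $\delta^{-1}$ on the left and lowering the $\Delta$-exponent by one, produces a $\theta' \in T$ with exponent $-(p-1)$; Lemma \ref{lem3_5} and Lemma \ref{lem3_7} ensure that no second peel-off is needed. This replaces $b_2$ by $b_2\delta^{-1}$, so $b' \in H$ satisfies $\lg(b') \leq 3$.

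Third, I would construct $\gamma' \in \Min(\alpha u)$ and conclude. Writing $\theta' = c^*\Delta^{-p^*}$ (with $p^* \in \{p-1, p\}$), observe that in either case $\gamma u = b_1 b'\theta'$ has its right $\Delta$-form with positive part equal to $b_1 cu' \in M$ (in the peel-off case this follows from the identity $\delta^{-1}c^* = \omega \Phi^{-1}(c^*)\Delta^{-1}$). Apply Lemma \ref{lem5_6} to the right greedy normal form of $b_1 cu'$ and extract a right-divisor of length $\min(\lg(b_1 cu'), p^*)$; this yields $b_4 \in N$ (parabolicity of $N$ ensures $b_4 \in N$) with $\gamma' := b_4\theta' \in \Min(\alpha u)$, the identity $\lg(\gamma') = \lg(\theta')$ being a direct consequence of Corollary \ref{corl2_4}. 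Setting $b_3 = b_1 b' b_4^{-1} \in H$, one computes $\lg(b_3) \leq \lg(\gamma u) - \lg(\theta') \leq (p+1)-(p-1) = 2 \leq 3$, using $\lg(\gamma u) \leq \lg(\gamma)+1 = p+1$ and $\lg(\theta') \geq \lg(\theta)-1 = p-1$. Then $\beta' := \alpha u\gamma'^{-1} \in \pi_H(\alpha u)$ by Lemma \ref{lem5_8}, and $\beta^{-1}\beta' = b_1 b'\theta' (b_4\theta')^{-1} = b_3$, so $d(\beta, \beta') = \lg(b_3) \leq 3$.

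The main obstacle will be the second step: verifying rigorously that reducing $\theta u$ to the transversal form requires only one peel-off and that the single adjustment produces $\theta' \in T$, while simultaneously tracking $\lg(b') \leq 3$ through the case where $\omega$-absorption interacts with the $\tau_N$-extraction.
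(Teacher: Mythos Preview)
Your first two steps track the paper's proof closely: the reduction to $\beta_1=b_1\in N$ via Lemma~\ref{lem3_10} and Corollary~\ref{corl2_4}, the extraction $cu'=b_2c_1$ with $\lg(b_2)\le 2$, and the single peel-off $c_1=\omega c_2$ are exactly what the paper does. The paper's verification that at most one $\omega$ can be peeled (your acknowledged ``main obstacle'') is a short chain of implications using $\omega\not\le_L c$, and your sketch points at the right ingredients.

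The genuine gap is in your third step. You propose to apply Lemma~\ref{lem5_6} to $b_1cu'=b_1b_2\omega c''$ and extract a right-divisor of the form $b_4c^*$ with $b_4\in N$, where $c^*=\Phi(c'')$. But $c^*$ is \emph{not} in general a right-divisor of $b_1b_2\omega c''$: writing $\omega c''=\delta^{-1}\Phi(c'')\Delta$ shows that the factor $\Phi(c'')$ sits to the \emph{left} of a $\Delta$, not at the right end, so Lemma~\ref{lem5_6} cannot produce the factorisation you need. Consequently there is no mechanism to ensure $b_4\in N$, nor to justify the asserted inequality $\lg(b_3)\le \lg(\gamma u)-\lg(\theta')$ (which, as written, goes the wrong way relative to the triangle inequality). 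Your extraction length $\min(\lg(b_1cu'),p^*)$ is also too small: one needs $m=\max(\lg(c^*),p^*)$ to guarantee $c^*\le_R d$ and $\lg(\gamma')=\lg(\theta')$.

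The paper resolves this by first commuting $b_1b_2$ past $\omega$ using Lemma~\ref{lem3_5}, writing $b_1cu'=\omega\,b_1'b_2'c_2$ with $b_i'=(\Phi^{-1}\circ\varphi)(b_i)\in\Phi^{-1}(N)$, and then applying Lemma~\ref{lem5_6} to $b_1'b_2'c_2$, where $c_2$ \emph{is} a right-divisor. This yields $b_3',b_4'\in\Phi^{-1}(N)$ with $b_3'b_4'=b_1'b_2'$ and $\lg(b_3')\le 2$; transporting back by $\Phi$ gives $b_3,b_4\in N$, and the final distance is $\lg(\delta^{-p}b_3)\le p+\lg(b_3)\le 1+2=3$. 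The passage through $\Phi^{-p}(N)$ is not cosmetic: it is what makes the right-divisor extraction compatible with the transversal element $c^*=\Phi^p(c_2)$.
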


\begin{proof}
Let $\theta = c \Delta^{-t}$ be the right $\Delta$-form of $\theta$ and $\ell = \lg (\theta)$.
By Theorem \ref{thm3_4}, $\ell = \lg (H \alpha)$ and $\theta \in \Min (\alpha)$.
Since $\theta \in M^{-1}$ and $\theta \neq 1$, by Proposition \ref{prop2_3}~(2), $t\geq \lg (c)$  and Corollary \ref{corl2_4}, $\ell = t \ge \lg(c)$ and $\ell \ge 1$. 
Moreover, since $\theta \in T$, the element $c$ is $N$-reduced and $\omega \not \le_L c$. 
By Lemma \ref{lem5_8}, there exists $\gamma \in \Min (\alpha)$ such that $\beta = \alpha \gamma^{-1}$. 
Let $\beta_1 \in H$ such that $\beta_1 \theta = \gamma$. 
We show that $\beta_1 = b_1 \in N$. 
Suppose instead that $\beta_1 \not \in N$.
Then $\beta_1$ is written $\beta_1 = b_1 \delta^{-k}$ where $k \ge 1$ and $\delta \not \le_L b_1$. 
By Lemma \ref{lem3_10}, the right $\Delta$-form of $\gamma$ is $\gamma = b_1 \omega_1 \omega_2 \cdots \omega_k \Phi^{-k}(c) \Delta^{-k -\ell}$, where $\omega_i = \Phi^{-i+1} (\omega)$. 
By Corollary \ref{corl2_4} and Lemma \ref{lem5_7} it follows that  
\[
\ell = \lg(\gamma) = \max (\lg (b_1 \omega_1 \omega_2 \cdots \omega_k \Phi^{-k}(c)), k+ \ell)
\ge k + \ell > \ell\,:
\]
contradiction.
So, $\beta_1 = b_1 \in N$ and $\gamma = (b_1 c) \Delta^{- \ell}$.
By Lemma \ref{lem3_9}, $b_1 c$ is unmovable, hence $\gamma = (b_1 c)\Delta^{- \ell }$ is the right $\Delta$-form of $\gamma$ and, by Corollary \ref{corl2_4}, $\ell = \lg(\gamma) = \max( \lg(b_1c), \ell)$.
Thus, $\lg(c) \le \lg(b_1 c) \le \ell$.

We have $\theta u = c \Delta^{- \ell} u = c u_1 \Delta^{-\ell}$, where $u_1 = \Phi^{-\ell}(u) \in \SS$.
Let $b_2 = \tau (c u_1) \in N$ and let $c_1 \in M$ such that $c u_1 = b_2 c_1$.
We write $c_1$ in the form $c_1 = \omega_1 \omega_2 \cdots \omega_p c_2$, where $\omega_i = \Phi^{-i+1} (\omega)$ for all $i \in \{1, \dots, p\}$, and $\Phi^{-p} (\omega) \not \le_L c_2$.
We show that $p \le 1$.
Assume instead that $p \ge 2$.
Let $u_1' \in \Div (\Delta)$ such that $u_1' u_1 = \Delta$.
Recall that $\varphi : H \to H$ denotes the conjugation by $\delta$ in $H$.
Then 
\begin{gather*}
\omega_1 \omega_2 \le_L c_1 \ \Rightarrow\  
b_2 \omega_1 \omega_2 \le_L b_2 c_1 = c u_1 \ \Rightarrow\
\delta^2 b_2 \omega_1 \omega_2 = \varphi^2(b_2) \Delta^2 \le_L \delta^2 c u_1\,.
\end{gather*}
Observe that for $x\in M$ one has that $\Delta^i\le_L x \Delta^i$ and $\Delta^i \le_R \Delta^i x$. We have that
\begin{gather*}
\omega_1 \omega_2 \le_L c_1 \ \Rightarrow\
\Delta^2 \le_L \delta^2 c u_1 \ \Rightarrow\
\Delta^2 \le_R  \delta^2 c u_1 \ \Rightarrow\  
\Delta u_1' \le_R \delta^2 c \ \Rightarrow\
\Delta \le_R \delta^2 c \ \Rightarrow\\
\Delta \le_L \delta^2 c \ \Rightarrow\
\omega \le_L \delta^2 c \ \Rightarrow\
\omega \wedge_L \delta^2 = \delta^2 \omega \le_L \delta^2 c \ \Rightarrow\
\omega \le_L c\,:
\end{gather*}
contradiction.
So, $p \le 1$.

Let $b_1' = \omega_p^{-1} \cdots \omega_1^{-1} b_1 \omega_1 \cdots \omega_p \in \Phi^{-p}(N)$ and $b_2' = \omega_p^{-1} \cdots \omega_1^{-1} b_2 \omega_1 \cdots \omega_p \in \Phi^{-p}(N)$.
Then $b_1 c u_1 = b_1 b_2 c_1 = b_1 b_2 \omega_1 \cdots \omega_p c_2 = \omega_1 \cdots \omega_p b_1' b_2' c_2$.
We show that there exist $b_3', b_4' \in \Phi^{-p}(N)$ such that $b_3' b_4' = b_1' b_2'$, $\lg (b_4' c_2) \le \max(\ell-p, \lg (c_2))$ and $\lg (b_3') \le 2$.
Set  $n = \lg (b_1' b_2' c_2)$ and $m = \max (\ell-p, \lg (c_2))$.
We have  
\[
\lg (c_2) \le \lg (b_2 \omega_1 \cdots \omega_p c_2) = \lg (c u_1) \le \lg (c) + 1 \le \ell +1 \le \ell - p + 2\,,
\] 
hence $\ell - p \le m \le \ell - p + 2$.
So, 
\begin{gather*}
n - m \le n - \ell + p \le n - \ell + 1 \le \lg( \omega_1 \cdots \omega_p b_1' b_2' c_2) - \ell + 1\\
= \lg (b_1 b_2 \omega_1 \cdots \omega_p c_2) - \ell + 1
= \lg (b_1 c u_1) - \ell + 1 \le \lg (b_1 c) - \ell + 2 \le 2\,.
\end{gather*}
Let $b_1' b_2' c_2 = v_n \cdots v_2 v_1$ be the right greedy normal form of $b_1' b_2' c_2$.
Suppose first that $m \ge n$.
Set $b_3' = 1$ and $b_4' = b_1' b_2'$.
Then $\lg (b_4' c_2) = \lg (b_1' b_2' c_2) = n \le m = \max(\ell - p, \lg(c_2))$ and $\lg (b_3') = 0 \le 2$.
Suppose now that $m < n$.
Let $d = v_m \cdots v_2 v_1$.
Since $\lg( c_2) \leq m$, by Lemma \ref{lem5_6} we have that  $c_2 \le_R d$.
Let $b_4' \in M$ such that $b_4' c_2 = d$, and let $b_3' = v_n \cdots v_{m+1}$.
Then $\lg (b_3') \le n-m \le 2$, $\lg (b_4' c_2) \le m = \max(\ell - p, \lg (c_2))$, $b_3' b_4' = b_1' b_2'$, since $b_3'b_4' c_2 =
b_1' b_2' c_2$, and $b_3', b_4' \in \Phi^{-p}(N)$, since $b_3' b_4' = b_1' b_2' \in \Phi^{-p}(N)$ and $\Phi^{-p}(N)$ is a parabolic submonoid.

Let $c' = \Phi^{p} (c_2)$ and $\theta' = c' \Delta^{-\ell + p}$.
We have $\Phi^{-p} (\omega) \not \le_L c_2$, hence $\omega \not \le_L c'$.
Since $\omega \le_L \Delta$, this also implies that $\Delta \not \le_L c'$, that is, $c'$ is unmovable. 
Let $b' \in \Phi^{-p} (N)$.
Set $b = \omega_1 \cdots \omega_p b' \omega_p^{-1} \cdots \omega_1^{-1} \in N$.
If $b' \le_L c_2$, then $\omega_1 \cdots \omega_p b' = b \omega_1 \cdots \omega_p \le_L \omega_1 \cdots \omega_p c_2 = c_1$, hence  $b \le_L c_1$.
Since $c_1$ is $N$-reduced, it follows that $b = 1$, hence $b' = 1$.
This shows that $c_2$ is $\Phi^{-p} (N)$-reduced, hence $c'$ is $N$-reduced. 
Finally, since $\ell \ge 1$ and $p \le 1$, we have $\ell - p \ge 0$.
So, $\theta' \in T$.
On the other hand,
\begin{gather*}
\alpha u = 
\beta \gamma u = 
\beta b_1 c \Delta^{-\ell} u = 
\beta b_1 c u_1 \Delta^{-\ell} = 
\beta b_1 b_2 \omega_1 \cdots \omega_p c_2 \Delta^{-\ell}\\ = 
\beta b_1 b_2 \delta^{-p} \Delta^p c_2 \Delta^{-\ell} = 
\beta b_1 b_2 \delta^{-p} c' \Delta^{-\ell + p} = 
\beta b_1 b_2 \delta^{-p} \theta'\,,
\end{gather*}
hence $\theta'$ is the element of $T$ which lies in $H \alpha u$.

Set $b_3 = \Phi^{p} (b_3')$ and $b_4 = \Phi^{p} (b_4')$.
Then $b_3, b_4 \in N, \lg (b_3) \le 2$, and $\lg (b_4 c') = \lg (b_4' c_2) \le \max (\ell-p, \lg (c_2))  = \max (\ell-p, \lg(c')) = \lg (\theta')$.
Let $\gamma' = (b_4 c') \Delta^{-\ell+p} = b_4 \theta'$.
If we had $\Delta \le_L b_4 c'$, then we would have $\omega \le_L b_4 c'$, hence we would have $\omega \vee_L b_4 = b_4 \omega \le_L b_4 c'$, and therefore $\omega \le_L c'$: contradiction.
So, $b_4 c'$ is unmovable, hence $(b_4 c') \Delta^{-\ell+p}$ is the right $\Delta$-form of $\gamma'$.
By Corollary \ref{corl2_4} and the above it follows that $\lg (\gamma') = \max (\lg (b_4 c'),\ell-p) = \max (\lg(c'), \ell-p) = \lg (\theta')$, hence, since $\theta' \in \Min (\alpha u)$, we have $\gamma' \in \Min (\alpha u)$.
Let $\beta' =  \alpha u \gamma'^{-1}$.
We have $\beta' \in \pi_H (\alpha u)$ by Lemma \ref{lem5_8}.
Moreover, 
\begin{gather*}
\beta^{-1} \beta' 
= \gamma u \gamma'^{-1}
= b_1 c \Delta^{-\ell} u \Delta^{\ell-p} c'^{-1} b_4^{-1} 
= b_1 b_2 c_1 \Delta^{-p} c'^{-1} b_4^{-1}\\
= b_1 b_2 \omega_1  \cdots \omega_p c_2 \Delta^{-p} c'^{-1} b_4^{-1}
= \omega_1 \cdots \omega_p b_1' b_2' c_2 \Delta^{-p} c'^{-1} b_4^{-1}\\
= \delta^{-p} \Delta^{p} b_3' b_4' c_2 \Delta^{-p} c'^{-1} b_4^{-1} 
= \delta^{-p} b_3\,.
\end{gather*}
So, $d(\beta, \beta') = \lg (\delta^{-p} b_3) \le p + \lg (b_3) \le 3$.
\end{proof}

\begin{lem}\label{lem5_11}
Let $\alpha \in G$, $\beta \in \pi_H (\alpha)$ and $u \in \SS$.
Denote by $\theta$ the element of $T$ such that $H \theta = H \alpha$ and assume that $\theta \not \in M \cup M^{-1}$.
Then there exists $\beta' \in \pi_H (\alpha u)$ such that $d(\beta, \beta') \le 5$.
\end{lem}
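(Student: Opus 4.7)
The plan is to follow the template of Lemma \ref{lem5_10} closely. Write $\theta = c \Delta^{-p}$ in right $\Delta$-form; the hypothesis $\theta \notin M \cup M^{-1}$ together with Corollary \ref{corl2_4} forces $p \geq 1$, $p < \lg(c)$, and $\ell := \lg(H\alpha) = \lg(c)$. Using Lemma \ref{lem5_8}, pick $\gamma \in \Min(\alpha)$ with $\beta \gamma = \alpha$, and let $\beta_1 \in H$ be the element satisfying $\beta_1 \theta = \gamma$. The first substantive step is to rule out $\beta_1 \notin N$: if $\beta_1 = b_1 \delta^{-k}$ were in right $\delta$-form with $k \geq 1$, Lemma \ref{lem3_10} combined with Lemmas \ref{lem3_6} and \ref{lem5_7} and Corollary \ref{corl2_4} would give $\lg(\gamma) \geq \ell + k > \ell$, contradicting $\gamma \in \Min(\alpha)$. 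So $\beta_1 = b_1 \in N$, Lemma \ref{lem3_9} identifies $\gamma = (b_1 c) \Delta^{-p}$ as the right $\Delta$-form of $\gamma$, and Corollary \ref{corl2_4} together with Lemma \ref{lem3_6} forces $\lg(b_1 c) = \lg(c) = \ell$.

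Second, I locate the representative $\theta' \in T \cap H\alpha u$ by mimicking Lemma \ref{lem5_10}. Set $u_1 = \Phi^{-p}(u)$, $b_2 = \tau_N(c u_1) \in N$, and write $c u_1 = b_2 c_1$ with $c_1$ $N$-reduced; factor $c_1 = \omega_1 \cdots \omega_q c_2$ with $\Phi^{-q}(\omega) \not\le_L c_2$. The $\Delta^2$-divisibility argument of Lemma \ref{lem5_10}, whose only input is the hypothesis $\omega \not\le_L c$, still forces $q \leq 1$. Setting $c' = \Phi^q(c_2)$, the same checks as in Lemma \ref{lem5_10} show that $c'$ is unmovable, $N$-reduced, and $\omega \not\le_L c'$; since $p - q \geq 0$, the element $\theta' := c' \Delta^{-p+q}$ lies in $T$, and the identity $\alpha u = \beta\, b_1 b_2\, \delta^{-q} \theta'$ places it in $H \alpha u$, so $\lg(\theta') = \lg(H\alpha u)$ by Theorem \ref{thm3_4}.

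Third, I produce $\gamma' \in \Min(\alpha u)$ close to $\gamma$. Conjugate to $b_1', b_2' \in \Phi^{-q}(N)$ so that $b_1 c u_1 = \omega_1 \cdots \omega_q b_1' b_2' c_2$, set $n = \lg(b_1' b_2' c_2)$ and $m = \lg(\theta') = \max(\lg(c_2),\, p-q)$, compute the right greedy normal form $b_1' b_2' c_2 = v_n \cdots v_1$, and split off $b_4'$ with $b_4' c_2 = v_m \cdots v_1$ and $b_3' = v_n \cdots v_{m+1}$ (using $\lg(c_2) \leq m$ and Lemma \ref{lem5_6}). The crucial estimate chains $n \leq \lg(\omega_1 \cdots \omega_q b_1' b_2' c_2) = \lg(b_1 c u_1) \leq \ell + 1$ via Lemma \ref{lem3_6} and $\lg(b_1 c) = \ell$, and combines this with the geometric lower bound $m = \lg(H\alpha u) \geq \lg(H\alpha) - 1 = \ell - 1$ to give $n - m \leq 2$, i.e.\ $\lg(b_3') \leq 2$. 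Setting $\gamma' := \Phi^q(b_4') c' \Delta^{-p+q}$, the unmovability check of Lemma \ref{lem5_10} ensures this is in right $\Delta$-form and Corollary \ref{corl2_4} gives $\lg(\gamma') = m$, so $\gamma' \in \Min(\alpha u)$ and $\beta' := \alpha u (\gamma')^{-1} \in \pi_H(\alpha u)$ by Lemma \ref{lem5_8}. A short computation yields $\beta^{-1} \beta' = \delta^{-q} \Phi^q(b_3')$, whence $d(\beta, \beta') \leq q + \lg(b_3') \leq 1 + 2 \leq 5$.

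The main obstacle is extracting a useful lower bound on $m = \lg(\theta')$. Unlike Lemma \ref{lem5_10}, where $\ell = p$ and the inequality $m \geq \ell - p$ drops out of the algebraic definition $m = \max(\ell - p, \lg(c_2))$, here $\ell = \lg(c)$ is unrelated to $p$ and the purely algebraic bound $m \geq p - q$ is far too weak; the geometric estimate $|\lg(H\alpha u) - \lg(H\alpha)| \leq 1$ must be invoked to tie $m$ back to $\ell$. Once this is in hand, the length bookkeeping, the unmovability checks and the conjugation arithmetic mimic those of Lemma \ref{lem5_10} essentially verbatim.
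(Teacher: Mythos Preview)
Your argument is correct and follows the paper's template for the first two stages (showing $\beta_1=b_1\in N$ with $\lg(b_1c)=\ell$, and producing $\theta'\in T\cap H\alpha u$), but at the crucial length estimate you take a genuinely different and sharper route. The paper sets $m=\lg(c_2)$, first proves $\lg(b_2)\le 2$ separately, deduces $\lg(c_2)\ge\lg(c)-3$, and combines this with $n\le\lg(c)+1$ to obtain $\lg(b_3')=n-m\le 4$, giving $d(\beta,\beta')\le q+4\le 5$. You instead set $m=\lg(\theta')=\max(\lg(c_2),\,p-q)$ and invoke the $1$--Lipschitz inequality $\lg(H\alpha u)\ge\lg(H\alpha)-1$ to get $m\ge\ell-1$; together with $n\le\ell+1$ this yields $n-m\le 2$, hence $d(\beta,\beta')\le q+2\le 3$. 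Your approach is cleaner (no need for the side estimate on $\lg(b_2)$) and gives a strictly better constant, at the cost of importing one extrinsic fact about coset lengths that the paper's purely Garside-internal computation avoids.

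One small point you should make explicit: when you split off $b_4'c_2=v_m\cdots v_1$ you implicitly use $m\le n$. This does hold: from $\omega_1\cdots\omega_q(b_1'b_2'c_2)=b_1cu_1$ and $\lg(b_1cu_1)\ge\lg(b_1c)-1=\ell-1$ one gets $n\ge\ell-1-q$, while $p-q\le(\ell-1)-q$ since $p<\ell$, so $p-q\le n$; together with $\lg(c_2)\le n$ (Lemma~\ref{lem3_6}) this gives $m\le n$. Alternatively, the case $m\ge n$ is handled trivially with $b_3'=1$, as in the paper's Lemma~\ref{lem5_10}.
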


\begin{proof}
Let $\theta = c \Delta^{-\ell}$ be the right $\Delta$-form of $\theta$.
Since $\theta \not \in M \cup M^{-1}$, by Proposition \ref{prop2_3} and Corollary \ref{corl2_4}, we have $1 \le \ell < \lg (c)$ and $\lg (\theta) = \lg(c)$.
Furthermore, by Theorem \ref{thm3_4}, $\lg (H \alpha) = \lg(\theta)$ and so $\theta \in \Min (\alpha)$.
Moreover, since $\theta \in T$, the element $c$ is $N$-reduced and $\omega \not \le_L c$.
By Lemma \ref{lem5_8} there exists $\gamma \in \Min (\alpha)$ such that $\beta = \alpha \gamma^{-1}$.
Let $\beta_1 \in H$ such that $\beta_1 \theta = \gamma$.
We show that $\beta_1 = b_1 \in N$.
Suppose instead that $\beta_1 \not \in N$.
Then $\beta_1$ is written $\beta_1 = b_1 \delta^{-k}$ where $k \ge 1$, $b_1 \in N$, and $\delta \not \le_L b_1$.
By Lemma \ref{lem3_10}, the right $\Delta$-form of $\gamma$ is $\gamma = b_1 \omega_1 \omega_2 \cdots \omega_k \Phi^{-k}(c) \Delta^{-k-\ell}$, where $\omega_i = \Phi^{-i+1} (\omega)$.
Then, by Corollary \ref{corl2_4} and Lemma \ref{lem5_7}, $\lg (\gamma) = \max (\lg (b_1 \omega_1 \cdots \omega_k \Phi^{-k} (c)), k+ \ell) \ge \max (\lg (\omega_1 \cdots \omega_k \Phi^{-k} (c)), k+ \ell) = \max (\lg (c) + k, k + \ell) > \lg (c) = \lg (\theta)$: contradiction.
So, $\beta_1 = b_1 \in N$ and $\gamma = (b_1 c) \Delta^{-\ell}$.
By Lemma \ref{lem3_9}, $b_1 c$ is unmovable, hence $\gamma = (b_1 c) \Delta^{-\ell}$ is the right $\Delta$-form of $\gamma$.
By Corollary \ref{corl2_4}, $\lg (c) = \lg (\theta) = \lg (\gamma) = \max (\lg (b_1 c),\ell) \ge \lg (b_1 c)$ and, by Lemma \ref{lem3_6}, $\lg (b_1 c) \ge \lg (c)$, hence $\lg (c) = \lg (b_1 c)$.

We have $\theta u = c \Delta^{-\ell} u = c u_1 \Delta^{-\ell}$, where $u_1 = \Phi^{-\ell}(u) \in \SS$.
Let $b_2 = \tau (c u_1) \in N$ and let $c_1 \in M$ such that $c u_1 = b_2 c_1$.
Since $c$ is $N$-reduced, we have $b_2 \wedge_L c = 1$, hence, by Theorem \ref{thm2_1}, $\lg (b_2^{-1} c) = \lg (b_2) + \lg (c)$.
It follows that 
\begin{gather*}
\lg (c) + 1 \ge \lg (c u_1) = \lg (b_2 c_1) \ge \lg(c_1) = \lg (b_2^{-1} c u_1)\\
\ge \lg (b_2^{-1} c) - 1 = \lg (b_2) + \lg(c) -1\,,
\end{gather*}
hence $\lg (b_2) \le 2$.

We write $c_1$ in the form $c_1 = \omega_1 \omega_2 \cdots \omega_p c_2$, where $\omega_i = \Phi^{-i+1} (\omega)$ for all $i \in \{1, \dots, p\}$, and $\Phi^{-p} (\omega) \not \le_L c_2$.
We show that $p \le 1$ in the same way as in the proof of Lemma \ref{lem5_10}.
Moreover, by the above,
\[
\lg (c_2) \ge \lg (b_2 \omega_1 \cdots \omega_p c_2) - \lg (b_2) - \lg (\omega_1 \cdots \omega_p) \ge \lg (c u_1) -3 \ge \lg (c) -3\,.
\]

Let $b_1' = \omega_p^{-1} \cdots \omega_1^{-1} b_1 \omega_1 \cdots \omega_p \in \Phi^{-p} (N)$ and $b_2' = \omega_p^{-1} \cdots \omega_1^{-1} b_2 \omega_1 \cdots \omega_p \in \Phi^{-p} (N)$.
Then $b_1 c u_1 = b_1 b_2 c_1 = b_1 b_2 \omega_1 \cdots \omega_p c_2 = \omega_1 \cdots \omega_p b_1' b_2' c_2$.
We show now that there exist $b_3', b_4' \in \Phi^{-p} (N)$ such that $b_3' b_4' = b_1' b_2'$, $\lg (b_4' c_2) = \lg (c_2)$ and $\lg (b_3') \le 4$.
Set $n = \lg (b_1' b_2' c_2)$ and $m = \lg (c_2)$.
We have $n \ge m$ by Lemma \ref{lem3_6} and, by the above, 
\begin{gather*}
n = \lg (b_1' b_2' c_2) \le \lg (\omega_1 \cdots \omega_p b_1' b_2' c_2) = \lg (b_1 c u_1) \le \lg (b_1 c) + 1\\
 = \lg(c) + 1 \le \lg (c_2)+4 = m+4\,.
\end{gather*}
Let $b_1' b_2' c_2 = v_n \cdots v_2 v_1$ be the right greedy normal form of $b_1' b_2' c_2$.
Set $d = v_m \cdots v_2 v_1$.
By Lemma \ref{lem5_6} we have $c_2 \le_R d$.
Let $b_4' \in M$ such that $b_4' c_2 = d$ and let $b_3' = v_n \cdots v_{m+1}$.
We have $b_3' b_4' c_2 = b_1' b_2' c_2$, hence $b_3' b_4' = b_1' b_2'$.
In particular, $b_3', b_4' \in \Phi^{-p}(N)$, since $b_1' b_2' \in \Phi^{-p} (N)$ and $\Phi^{-p} (N)$ is a parabolic submonoid.
Furthermore, by the above, $\lg (b_3') = n-m \le 4$ and $\lg (b_4' c_2) = \lg (d) = m = \lg(c_2)$.

Let $c' = \Phi^{p} (c_2)$ and let $\theta' = c' \Delta^{-\ell+p}$.
We show in the same way as in the proof of Lemma \ref{lem5_10} that $\theta' = c' \Delta^{-\ell+p}$ is the right $\Delta$-form of $\theta'$, that $ \omega \not \le_L c'$, that $\theta' \in T$, and that $\theta' \in H \alpha u$.
Set $b_3 = \Phi^p (b_3')$ and $b_4 = \Phi^p(b_4')$.
Then $b_3, b_4 \in N$, $\lg (b_3) \le 4$, and $\lg (b_4 c') = \lg (b_4' c_2) = \lg (c_2) = \lg (c')$.
Let $\gamma' = (b_4 c') \Delta^{-\ell+p} = b_4 \theta'$.
If we had $\Delta \le_L b_4 c'$, then we would have $\omega \le_L b_4 c'$, hence we would have $\omega \vee_L b_4 = b_4 \omega \le_L b_4 c'$, and therefore $\omega \le_L c'$: contradiction.
So, $b_4 c'$ is unmovable, hence $(b_4 c') \Delta^{-\ell+p}$ is the right $\Delta$-form of $\gamma'$.
By Corollary \ref{corl2_4} and the above it follows that $\lg (\gamma') = \max (\lg (b_4 c'), \ell-p) = \max (\lg (c'), \ell-p) =
\lg (\theta')$.
So, since $\theta' \in \Min (\alpha u)$, we have $\gamma' \in \Min (\alpha u)$.
Let $\beta' = \alpha u \gamma'^{-1}$.
We have $\beta' \in \pi_H (\alpha u)$ by Lemma \ref{lem5_8}.
Furthermore, 
\begin{gather*}
\beta^{-1} \beta' = \gamma u \gamma'^{-1}
= b_1 c \Delta^{-\ell} u \Delta^{\ell-p} c'^{-1} b_4^{-1} 
= b_1 b_2 \omega_1 \cdots \omega_p c_2 \Delta^{-p} c'^{-1} b_4^{-1}\\
= \omega_1 \cdots \omega_p b_1' b_2' c_2 \Delta^{-p} c'^{-1} b_4^{-1}
= \delta^{-p} \Delta^{p} b_3' b_4' c_2 \Delta^{-p} c'^{-1} b_4^{-1} 
= \delta^{-p} b_3\,.
\end{gather*}
Thus, $d(\beta,\beta') = \lg(\delta^{-p} b_3) \le p + \lg (b_3) \le 5$.
\end{proof}

\begin{proof}[Proof of Theorem \ref{thm5_5}]
Follows directly from Lemma \ref{lem5_9}, Lemma \ref{lem5_10} and Lemma \ref{lem5_11}.
\end{proof}

\subsection*{Acknowledgments}
The first author acknowledges partial
support from the Spanish Government through grant number MTM2017-82690-P and through the ''Severo Ochoa Programme for Centres of Excellence in R\&{}D'' (SEV--2015--0554)


\end{document}